\numberwithin{equation}{section}
\newtheorem{theorem}{Theorem}[section]
\newtheorem{lemma}[theorem]{Lemma}
\newtheorem{corollary}[theorem]{Corollary}
\newtheorem{remark}[theorem]{Remark}
\newtheorem{definition}[theorem]{Definition}
\newtheorem{proposition}[theorem]{Proposition}
\begin{document}
\title{Dynamics of thermoelastic plate system with terms concentrated in the boundary: the lower semicontinuity of the global attractors}

\author{Gleiciane S. Arag\~ao\footnote{Partially supported by FAPESP 2019/04476-6, Brazil.} \\ {\footnotesize Departamento de Ci\^encias Exatas e da Terra, Universidade Federal de S\~ao Paulo,}\\ {\footnotesize 09913-030 Diadema SP, Brazil.}\\ {\footnotesize gleiciane.aragao@unifesp.br}\\ \\ Flank D. M. Bezerra\\ {\footnotesize Departamento de Matem\'atica, Universidade Federal da Para\'iba}\\ {\footnotesize 58051-900 Jo\~ao Pessoa PB, Brazil.}\\ {\footnotesize flank@mat.ufpb.br}\\ \\ Cl\'adio O. P. Da Silva\\ {\footnotesize Centro de Ci\^encias Humanas e Exatas, Universidade Estadual da Para\'iba}\\ {\footnotesize 58500-000 Monteiro PB, Brazil.}\\ {\footnotesize cladio@cche.uepb.edu.br}}

\maketitle

\begin{abstract}
In this paper we show the  lower semicontinuity of the global attractors of  autonomous thermoelastic plate systems with Neumann boundary conditions when some reaction terms are concentrated in a neighborhood of the boundary and this neighborhood  shrinks to boundary as a parameter $\varepsilon$ goes to zero.

\vskip .1 in \noindent {\it Mathematical Subject Classification 2010:} 34A12, 34D45, 35A01, 35B40, 37L05.

\noindent \textit{keywords}: global attractor; thermoelastic plate systems; autonomous; concentrating terms; lower semicontinuity; dynamics.

\end{abstract}

\tableofcontents


\section{Introduction}

In this work we analyze the asymptotic  behavior of the global compact attractors of  autonomous thermoelastic plate systems with Neumann boundary conditions when some reaction terms are concentrated in a neighborhood of the boundary and this neighborhood  shrinks to boundary as a parameter $\varepsilon$ goes to zero. There has been numerous studies to investigate the dynamics, in the sense of  attractors, of systems when reaction terms are concentrated in a neighborhood of the boundary and this neighborhood  shrinks to boundary as a parameter $\varepsilon$ goes to zero, see for instance \cite{aragaobezerra0,aragaobezerra1,ABDS,aragaooliva1,aragaooliva2,GPMa0,GPMa,arrieta,anibal,AnibalAngela} and references therein. 

In this paper we continue the analysis made in \cite{ABDS}, and to better describe the problem we introduce some notations, let $\Omega$ be an open bounded smooth set in  $\mathbb{R}^{5}$ with  boundary $\Gamma=\partial \Omega$. We define the strip of width $\varepsilon$ and base $\partial \Omega$ as
$$
\omega_{\varepsilon}=\{x-\sigma \stackrel{\rightarrow}{n}(x): \ \mbox{$x\in \Gamma$  \ and  \ $\sigma \in [0,\varepsilon)$}\}, 
$$
for sufficiently small $\varepsilon$, say $0< \varepsilon \leqslant \varepsilon_{0}$, where $\stackrel{\rightarrow}{n}(x)$ denotes the outward normal vector at $x\in \Gamma$. We note that the set $\omega_{\varepsilon}$ has Lebesgue measure $\left|\omega_{\varepsilon}\right|=O(\varepsilon)$ with $\left|\omega_{\varepsilon}\right|\leqslant k\left|\Gamma\right|\varepsilon$, for some $k> 0$ independent of $\varepsilon$, and that for small $\varepsilon$, the set $\omega_{\varepsilon}$ is a neighborhood of $\Gamma$ in $\overline{\Omega}$, that collapses to the boundary when the parameter $\varepsilon$ goes to zero, see Figure \ref{figomega}. 
\begin{figure}[!h]\label{figomega}
\begin{center}
    \includegraphics[scale=.4]{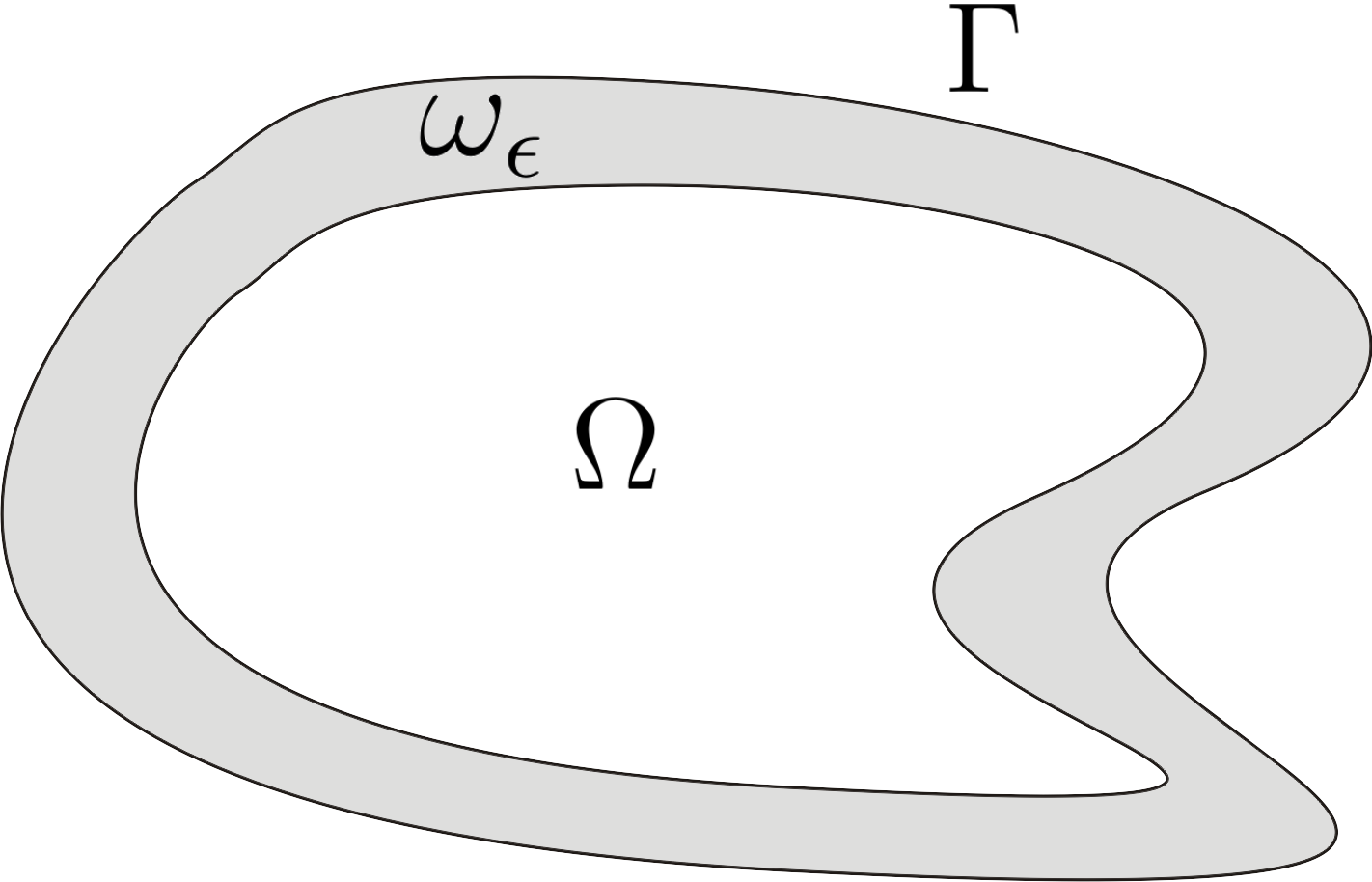}
\end{center}
\caption{The set $\omega_{\epsilon}\subset\overline{\Omega}$. This figure has been extracted of \cite{ABDS}.} 
\end{figure}

In \cite{ABDS} we show the existence, uniform boundedness and upper semicontinuity of the global attractors at $\varepsilon=0$ of the autonomous thermoelastic plate system
\begin{equation}\label{PPrin_1}
\begin{cases}
\partial_t^2u^\varepsilon+\Delta^2 u^\varepsilon+ u^\varepsilon +\Delta \theta^\varepsilon- \theta^\varepsilon=f(u^\varepsilon)+\dfrac{1}{\varepsilon}\chi_{\omega_{\varepsilon}}g(u^\varepsilon)& \mbox{in } \Omega\times(0,+\infty),\\
\partial_t\theta^\varepsilon-\Delta \theta^\varepsilon+ \theta^\varepsilon -\Delta \partial_tu^\varepsilon+ \partial_tu^\varepsilon=0& \mbox{in } \Omega\times(0,+\infty),\\
\dfrac{\partial u^\varepsilon}{\partial\vec{n}}=0,\ \ \dfrac{\partial (\Delta u^\varepsilon)}{\partial\vec{n}}=0,\ \ \dfrac{\partial \theta^\varepsilon}{\partial\vec{n}}=0 & \mbox{on}\ \Gamma\times(0,+\infty),\\
u^\varepsilon(0)=u_0\in H^2(\Omega),\ \ u_t^\varepsilon(0)=v_0\in L^2(\Omega),\ \ \theta^\varepsilon(0)=\theta_0\in L^2(\Omega),
\end{cases}
\end{equation}
where $\chi_{\omega_{\varepsilon}}$ denotes the characteristic function of the set $\omega_{\varepsilon}$. As in \eqref{PPrin_1} the nonlinear term $g(u^\varepsilon)$ is only effective on the region $\omega_{\varepsilon}$ which collapses to $\Gamma$ as $\varepsilon\to0$, then it is reasonable to expect that the family of solutions $u^{\varepsilon}$ of \eqref{PPrin_1}  will converge to a solution of an equation of the same type with nonlinear boundary condition on $\Gamma$. Indeed, we show that the ``limit problem'' for the autonomous thermoelastic plate system \eqref{PPrin_1} is given by
\begin{equation}
\label{PPrin_2}
\begin{cases}
\partial_t^2u+\Delta^2 u+ u +\Delta \theta- \theta=f(u) & \mbox{in}\ \Omega\times(0,+\infty),\\
\partial_t\theta-\Delta \theta+ \theta -\Delta \partial_tu+ \partial_tu=0& \mbox{in}\ \Omega\times(0,+\infty),\\
\dfrac{\partial u}{\partial\vec{n}}=0,\ \ \dfrac{\partial (\Delta u)}{\partial\vec{n}}=-g(u),\ \ \dfrac{\partial \theta}{\partial\vec{n}}=0 & \mbox{on}\ \Gamma\times(0,+\infty),\\
u(0)=u_0\in H^2(\Omega),\ \ u_t(0)=v_0\in L^2(\Omega),\ \ \theta(0)=\theta_0\in  L^2(\Omega).
\end{cases}
\end{equation}

We  consider $j:\mathbb{R}\to\mathbb{R}$ a $\mathcal{C}^2-$function and assume that it satisfies the growth estimates
\begin{equation}
\label{Dcrescimento}
|j(s)|+|j'(s)|+|j''(s)|\leqslant K,\quad \forall  s \in\mathbb{R},
\end{equation}
for some constant $K>0$, we also assume the standard dissipative assumption given by

\begin{equation}
\label{DissipCond}
\limsup_{|s|\to+\infty}\dfrac{j(s)}{s}\leqslant0,
\end{equation}
with  $j=f$ or $j=g$. We note that \eqref{DissipCond} is equivalent to saying that for any $\gamma >0$ there exists $c_{\gamma}>0$ such that
\begin{equation}
\label{EqDissipCond}
sj(s)\leqslant \gamma s^2+c_{\gamma}, \quad \forall s\in \mathbb{R}.
\end{equation}

Here,  we will prove the lower semicontinuity of the global attractors at $\varepsilon=0$ of the problems \eqref{PPrin_1} and \eqref{PPrin_2}, but for this end we need to show a result of continuity of equilibrium solutions of \eqref{PPrin_1} and \eqref{PPrin_2}, that is, the solutions of the  eliptic problems associated to \eqref{PPrin_1} and \eqref{PPrin_2} and we also show the continuity of local unstable manifold around of the set of equilibria.

To study the continuity of the set of equilibria we need to show the upper and lower semicontinuity. The upper semicontinuity is a direct consequence of the upper semicontinuity of the global attractors. To the lower semicontinuity we will assume that all equilibrium solutions of the problem \eqref{PPrin_2} are hyperbolic. We will show that the set of equilibria of \eqref{PPrin_2}, which we will denote by $\mathcal{E}_0$ has cardinality $k$, with elements different $w^*_1,\dots,w^*_k$. After we will show that there exist $\varepsilon_0$ such that the problem \eqref{PPrin_1}, has exactly $k$ equilibrium solutions, which we will denote by $w^*_{\varepsilon,1},\dots,w^*_{\varepsilon,k}$, for $\varepsilon \in (0,\varepsilon_0]$. Moreover, we will obtain the convergence $w^*_{\varepsilon,i} \to w^*_{i}$ as $\varepsilon \to 0$, for $i=1,\ldots,k$. 

To show the continuity of local unstable manifold around of the equilibrium, we first linearize the abstract problem around of the equilibrium solution $w^*_{\varepsilon,i}$, then we show the existence of this manifold as the graph of a map Lipschitz, and using the continuity of linearized semigroups we will show that the local unstable manifold, which we will denote by $W^{u}(w^*_{\varepsilon,i})$ are continuous at $\varepsilon=0$.      

With these two results and verifying that \eqref{PPrin_1} and \eqref{PPrin_2} have gradient structure we conclude the lower semicontinuity of the family of global attractors.

This paper is organized as follows. In Section \ref{well-posedness}, we will present some notations and we will define the abstract problems associated to the initial-boundary value problems \eqref{PPrin_1} and \eqref{PPrin_2}. Also we will present a result that ensure us the sectoriality of operator, concluding thus that there is an analytic semigroup generated by our operator. After we will see properties of the nonlinearities and of your derivatives. The Section \ref{Sec:ExAtt_UpperSem} is dedicated to the results on existence, characterization and uniform bounds of the global attractor, as well as the convergence of the nonlinear semigroups associated to the abstract problems, that was used to prove the upper semicontinuity of global attractors at $\varepsilon=0$, we refer to our results in \cite{ABDS}. In Section \ref{Sec:Cont_Eq} we will study the time independent solutions, that is, the equilibrium solutions of the problems \eqref{PPrin_1} and \eqref{PPrin_2}. Specifically we will prove the continuity of the set of equilibria. Finally, in Section \ref{Sec:Lower_At} we will prove the continuity of local unstable manifold around of the equilibrium and the lower semicontinuity of the global attractors of the problems \eqref{PPrin_1} and \eqref{PPrin_2} at $\varepsilon=0$. 


\section{Abstract setting}
\label{well-posedness}

To better explain the results in the paper, initially, we will define the abstract problems associated to \eqref{PPrin_1} and \eqref{PPrin_2}. After we will see properties of the nonlinearities and of your derivatives.


\subsection{Functional spaces}

Let us consider the  Hilbert space $Y:=L^2(\Omega)$ and the unbounded linear operator $\varLambda:D(\varLambda)\subset Y\to Y$ defined by 
\[
\varLambda u=(-\Delta)^2 u, \quad u\in D(\varLambda),
\]
with domain
\[
D(\varLambda):=\Big\{u\in H^4(\Omega) \;:\; \dfrac{\partial u}{\partial\vec{n}}=\dfrac{\partial (\Delta u)}{\partial\vec{n}}=0\ \mbox{on}\ \Gamma\Big\}.
\]

The operator  $\varLambda$ has a discrete spectrum formed of eigenvalues satisfying
\[
0=\mu_1\leqslant\mu_2\leqslant\cdots\leqslant\mu_n\leqslant\cdots,\quad\lim_{n\to\infty}\mu_n=\infty.
\]

Since this operator turns out to be sectorial in $Y$ in the sense of Henry \cite[Definition 1.3.1]{He} and Cholewa and D\l otko \cite[Example 1.3.9]{JanTomasz}, associated to it there is a scale of Banach spaces $Y^\alpha$, $\alpha \in \mathbb{R}$, denoting the domain of the fractional power operators associated with $\varLambda$, that is, $Y^\alpha:=D(\varLambda^\alpha), \alpha \geqslant 0$. Let us consider $Y^\alpha$ endowed with the  norm $\|(\cdot)\|_{Y^\alpha}=\|\varLambda^\alpha (\cdot)\|_Y+\|(\cdot)\|_Y, \alpha \geqslant 0$. The fractional power spaces are related to the Bessel Potentials spaces $H^{s}(\Omega)$, $s\in \mathbb{R}$,  and
it is well know that 
\[
Y^\alpha \hookrightarrow H^{2\alpha}(\Omega),\quad Y^{-\alpha}=(Y^\alpha)',\ \ \alpha\geqslant0,
\]
with 
\[
Y^{\frac{1}{2}}=\Big\{u\in H^2(\Omega):\ \dfrac{\partial u}{\partial\vec{n}}=0\ \mbox{on}\ \Gamma\Big\}.
\]

We also have 
\[
Y^{-\frac{1}{2}}=(Y^{\frac{1}{2}})',\quad Y=Y^0=L^2(\Omega)\quad \mbox{and}\quad Y^1=D(\Lambda).
\]

Since the problem \eqref{PPrin_2} has a nonlinear term on boundary, choosing $\frac{1}{2} < s \leqslant 1$ and using the standard trace theory results that for any function $v \in H^{s}(\Omega)$, the trace of $v$ is well defined and lies in $L^{2}(\Gamma)$. Moreover, the scale of negative exponents $Y^{-\alpha}$, for $\alpha > 0$, is necessary to introduce the nonlinear term of \eqref{PPrin_2} in the abstract equation, since we are using the operator  $\varLambda$ with homogeneous boundary conditions. If we consider the realizations of $\varLambda$ in this scale, then the operator $\varLambda_{-\frac{1}{2}} \in \mathcal{L}(Y^{\frac{1}{2}},Y^{-\frac{1}{2}})$ is given by 
$$
\langle \varLambda_{-\frac{1}{2}}u, v\rangle_Y = \int_\Omega \Delta u\Delta vdx+\int_\Omega uv dx, \quad u,v \in Y^{\frac{1}{2}}.
$$
With some abuse of notation we will identify all different realizations of this operator and we will write them all as $\Lambda$.

We also consider the operator $\varLambda+I:D(\varLambda+I)\subset Y\to Y$, it is a  positive defined and sectorial operator in $Y$ in the sense of Henry \cite[Definition 1.3.1]{He} and Cholewa and D\l otko \cite[Example 1.3.9]{JanTomasz}, associated to it there is a scale of Banach spaces (which are fractional power spaces)  $D((\varLambda+I)^\alpha)$, $ \alpha \geqslant 0$, domain of the operator $(\varLambda+I)^\alpha$. Let us consider $D((\varLambda+I)^\alpha)$ endowed with the graph norm $\|(\cdot)\|_{D((\varLambda+I)^\alpha)}=\|(\varLambda+I)^\alpha(\cdot)\|_Y, \alpha \geqslant 0$ ($0\in\rho((\varLambda+I)^\alpha)$). Consequentely, by Cholewa and D\l otko \cite[Corollary 1.3.5]{JanTomasz} and $D(\varLambda+I)=D(\varLambda)$, we also have that
\[
Y^\alpha=[Y,D(\varLambda)]_{\alpha}=[Y,D(\varLambda+I)]_{\alpha} =D((\varLambda+I)^\alpha), \quad 0\leqslant\alpha\leqslant1,
\]
endowed with equivalent norms. 

The operator  $\varLambda+I$ has a discrete spectrum formed of eigenvalues satisfying
\[
1=\mu^I_1\leqslant\mu^I_2\leqslant\cdots\leqslant\mu^I_n\leqslant\cdots,\quad\lim_{n\to\infty}\mu^I_n=\infty.
\]

Also, let us consider the following Hilbert spaces
\[
X=X^0=Y^{\frac{1}{2}}\times Y \times Y
\]
equipped with the inner product
\[
\Big\langle\Big(\begin{smallmatrix}u_1\\ v_1\\  \theta_1\end{smallmatrix}\Big),\Big(\begin{smallmatrix}u_2\\ v_2\\ \theta_2\end{smallmatrix}\Big)\Big\rangle_X =\langle u_1,u_2\rangle_{Y^{\frac{1}{2}}}+\langle v_1,v_2 \rangle_Y + \langle \theta_1,\theta_2 \rangle_{Y},
\]
where $\langle \cdot,\cdot \rangle_Y$ is the usual inner product in $L^{2}(\Omega)$, and
\[
\mathcal{H}= H^{2}(\Omega) \times H^{-s}(\Omega) \times L^{2}(\Omega)
\]
equipped with the usual inner product with $\frac{1}{2} < s \leqslant 1$.

We define the unbounded linear operator $\mathbb{A}:D(\mathbb{A})\subset X\to X$ by
\begin{equation}\label{Def4a}
\mathbb{A}\Big(\begin{smallmatrix} u\\ v\\ \theta \end{smallmatrix}\Big)=\begin{pmatrix}0 & I & 0 \\ -\varLambda-I & 0 & \varLambda^{\frac{1}{2}}+I\\ 0 & -\varLambda^{\frac{1}{2}}-I& -\varLambda^{\frac{1}{2}}-I\end{pmatrix} \Big(\begin{smallmatrix} u\\ v\\ \theta \end{smallmatrix}\Big)=\begin{pmatrix} v \\ -\varLambda u-u+\varLambda^{\frac{1}{2}}\theta+\theta \\ -\varLambda^{\frac{1}{2}}v-v - \varLambda^{\frac{1}{2}}\theta-\theta \end{pmatrix}, \quad \Big(\begin{smallmatrix} u\\ v\\ \theta \end{smallmatrix}\Big) \in D(\mathbb{A}), 
\end{equation}
with domain
\begin{equation}\label{Def3a}
D(\mathbb{A})=Y^1\times Y^{\frac{1}{2}} \times Y^{\frac{1}{2}}.
\end{equation}


For each $\varepsilon \in (0,\varepsilon_0]$, we write \eqref{PPrin_1} in the abstract form as
\begin{equation}
\label{AP1}
\begin{cases}
\displaystyle \dfrac{dw^{\varepsilon}}{dt}=\mathbb{A} w^{\varepsilon}+F_\varepsilon(w^{\varepsilon}), \quad  t> 0,\\
w^{\varepsilon}(0)=w_0,
\end{cases}
\end{equation}
with $\partial_tu^{\varepsilon}=v^{\varepsilon}$,
\[
w^{\varepsilon}=\Big(\begin{smallmatrix} u^{\varepsilon}\\ v^{\varepsilon}\\ \theta^{\varepsilon}\end{smallmatrix}\Big) , \quad w_{0}=\Big(\begin{smallmatrix}u_0\\ v_0\\ \theta_0\end{smallmatrix}\Big) \in X
\]
and nonlinear map $F_\varepsilon: X  \to  \mathcal{H}$, with $\frac{1}{2} < s \leqslant 1$, defined by
\begin{equation*}
F_\varepsilon (w)=\begin{pmatrix}0\\ f_{\Omega}(u)+\dfrac{1}{\varepsilon}\chi_{\omega_{\varepsilon}}g_{\Omega}(u)\\ 0\end{pmatrix}, \quad  w=\Big(\begin{smallmatrix}u\\ v\\ \theta\end{smallmatrix}\Big) \in X,
\end{equation*}
where $f_{\Omega},\dfrac{1}{\varepsilon}\chi_{\omega_{\varepsilon}}g_{\Omega} :H^2(\Omega)\to H^{-s}(\Omega)$ are the operators, respectively, given by 
\begin{equation}
\label{finterior}
\langle  f_{\Omega}(u),\varphi \rangle =\int_{\Omega} f(u) \varphi dx,  \quad \mbox{$ u\in H^2(\Omega)$  and $ \varphi \in H^{s}(\Omega)$} 
\end{equation}
and
\begin{equation}
\label{ginterior}
\Big\langle  \dfrac{1}{\varepsilon}\chi_{\omega_{\varepsilon}}g_{\Omega}(u),\varphi \Big\rangle =\frac{1}{\varepsilon} \int_{\omega_{\varepsilon}} g(u) \varphi dx,
 \quad \mbox{$ u\in H^{2}(\Omega)$  and $ \varphi \in H^{s}(\Omega)$}.
\end{equation}

While the problem \eqref{PPrin_2} can be written in the abstract form as
\begin{equation} 
\label{AP1b}
\begin{cases}
\displaystyle \dfrac{dw}{dt}=\mathbb{A} w+F_0(w),\quad t> 0,\\
w(0)=w_0,
\end{cases}
\end{equation}
with $\partial_tu=v$,
\[
w=\Big(\begin{smallmatrix}u\\ v\\ \theta\end{smallmatrix}\Big)
\]
and nonlinear map $F_0: X \to  \mathcal{H}$, with $\frac{1}{2} < s \leqslant 1$,  defined by
\begin{equation*}
F_0(w) =\begin{pmatrix}0\\ f_{\Omega}(u)+g_{\Gamma}(u)\\ 0\end{pmatrix}, \quad  w=\Big(\begin{smallmatrix}u\\ v\\ \theta\end{smallmatrix}\Big)\in  X,
\end{equation*}
where   $f_{\Omega}$ is defined in (\ref{finterior}) and $g_{\Gamma} :H^2(\Omega)\to H^{-s}(\Omega)$ is the operator given by 
\begin{equation}
\label{gfronteira}
\langle  g_{\Gamma}(u),\varphi \rangle =\int_{\Gamma} \gamma(g(u)) \gamma(\varphi) dS, \quad \mbox{$ u\in H^{2}(\Omega)$  and $ \varphi \in H^{s}(\Omega)$},   
\end{equation}
where $\gamma: H^{s}(\Omega)\to  L^{2}(\Gamma)$ is the trace operator, to according with Triebel \cite{T}.

\begin{theorem}\label{T24}
The unbounded linear operator $-\mathbb{A}$ such that $\mathbb{A}:D(\mathbb{A})\subset X\to X$ is defined in \eqref{Def4a}-\eqref{Def3a} is sectorial.
\end{theorem}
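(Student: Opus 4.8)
The plan is to decompose $\mathbb{A}$ into a part that is already known to be sectorial and a relatively bounded perturbation, and then appeal to a standard perturbation theorem for sectorial operators. First I would isolate the ``elastic'' block. Consider the operator
\[
\mathbb{A}_0\Big(\begin{smallmatrix} u\\ v\\ \theta \end{smallmatrix}\Big)=\begin{pmatrix}0 & I & 0 \\ -\varLambda-I & 0 & 0\\ 0 & 0 & -\varLambda^{\frac{1}{2}}-I\end{pmatrix}\Big(\begin{smallmatrix} u\\ v\\ \theta \end{smallmatrix}\Big),\qquad D(\mathbb{A}_0)=D(\mathbb{A}).
\]
The lower-right entry $-\varLambda^{1/2}-I$ generates an analytic semigroup on $Y$ since $\varLambda+I$ is positive and sectorial and fractional powers of sectorial operators are sectorial; equivalently $-\varLambda^{1/2}-I$ is self-adjoint and bounded above, hence $-$(it) is sectorial in the sense of Henry. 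The upper-left $2\times2$ block, acting on $Y^{1/2}\times Y$ with domain $Y^1\times Y^{1/2}$, is precisely the generator of the group associated with the conservative plate equation $\partial_t^2u+(\varLambda+I)u=0$; writing $\varLambda+I=B^2$ with $B=(\varLambda+I)^{1/2}$ self-adjoint positive, this block is skew-adjoint (hence the negative of it is sectorial) after the usual unitary change of variables $(u,v)\mapsto(Bu,v)$ on $Y^{1/2}\times Y\cong Y\times Y$. Since the direct sum of two sectorial operators on the corresponding product space is sectorial, $-\mathbb{A}_0$ is sectorial on $X$.

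Next I would treat the off-diagonal thermoelastic coupling as a perturbation. Write $\mathbb{A}=\mathbb{A}_0+\mathbb{B}$ where
\[
\mathbb{B}\Big(\begin{smallmatrix} u\\ v\\ \theta \end{smallmatrix}\Big)=\begin{pmatrix}0\\ \varLambda^{\frac{1}{2}}\theta+\theta\\ -\varLambda^{\frac{1}{2}}v-v \end{pmatrix}.
\]
The key point is that $\mathbb{B}$ is $\mathbb{A}_0$-bounded with relative bound zero: the entries of $\mathbb{B}$ involve $\varLambda^{1/2}$ applied to the $v$- and $\theta$-components, and on $D(\mathbb{A}_0)=Y^1\times Y^{1/2}\times Y^{1/2}$ we control $\|\varLambda^{1/2}v\|_Y$ and $\|\varLambda^{1/2}\theta\|_Y$ by $\|v\|_{Y^{1/2}}$ and $\|\theta\|_{Y^{1/2}}$, while $\|\mathbb{A}_0 w\|_X$ dominates $\|(\varLambda+I)u\|_Y$, $\|v\|_{Y^{1/2}}$ (via the $-\varLambda-I$ entry feeding the second slot is not quite it — rather via the graph norm one recovers $v\in Y^{1/2}$) and $\|(\varLambda^{1/2}+I)\theta\|_Y\gtrsim\|\theta\|_{Y^{1/2}}$. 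More carefully, using the interpolation inequality $\|\varLambda^{1/2}z\|_Y\le\eta\|\varLambda z\|_Y+C_\eta\|z\|_Y$ for $z$ in the appropriate space, one sees that each dangerous term is bounded by $\eta\|\mathbb{A}_0 w\|_X+C_\eta\|w\|_X$ for arbitrarily small $\eta$. By the standard perturbation result for sectorial operators (Henry, \cite[Corollary 1.4.3 or Theorem 1.4.9]{He}, or Cholewa--D\l otko \cite{JanTomasz}), an $\mathbb{A}_0$-bounded perturbation with sufficiently small relative bound preserves sectoriality, so $-\mathbb{A}=-\mathbb{A}_0-\mathbb{B}$ is sectorial on $X$.

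The main obstacle I anticipate is the bookkeeping in the relative-boundedness estimate: one must be careful about which norms the graph norm of $\mathbb{A}_0$ actually controls, because the map $(u,v)\mapsto(v,-(\varLambda+I)u)$ only directly gives $\|v\|_{Y^{1/2}}$ through the \emph{first} output component and $\|(\varLambda+I)u\|_Y=\|u\|_{Y^1}$ through the second, so the intermediate quantities $\|\varLambda^{1/2}v\|_Y$ and $\|v\|_{Y}$ have to be pulled out using the norm on $X$ (which already contains $\|v\|_Y$) together with the fact that on $D(\mathbb{A}_0)$ one has $v\in Y^{1/2}$ — this requires including $\|v\|_{Y^{1/2}}$ in the graph norm, which is automatic once one checks $D(\mathbb{A}_0)=Y^1\times Y^{1/2}\times Y^{1/2}$ is closed under the graph norm. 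An equivalent and perhaps cleaner route, which I would mention as an alternative, is to diagonalize in the spectral basis of $\varLambda$: since $\varLambda$ has discrete spectrum $\{\mu_n\}$ with eigenfunctions forming an orthonormal basis of $Y$, the operator $\mathbb{A}$ reduces on each joint eigenspace to an explicit $3\times3$ matrix $A_n$ depending on $\mu_n$, and one checks directly that the eigenvalues of $-A_n$ lie in a fixed sector $|\arg(\lambda-a)|\le\phi<\pi/2$ uniformly in $n$ and that the resolvents satisfy the sectorial bound $\|(\lambda-A_n)^{-1}\|\le M/|\lambda-a|$ uniformly in $n$; assembling these gives sectoriality of $-\mathbb{A}$ on $X$ directly. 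Either way, once sectoriality is established, the operator generates an analytic semigroup on $X$, which is what the subsequent sections require.
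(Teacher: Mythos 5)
The main route you propose fails at its very first step: $-\mathbb{A}_0$ is \emph{not} sectorial. After the change of variables $(u,v)\mapsto(Bu,v)$ with $B=(\varLambda+I)^{1/2}$, the plate block becomes skew-adjoint with \emph{unbounded} spectrum on the imaginary axis (its eigenvalues are $\pm i(\mu_n+1)^{1/2}\to\pm i\infty$). A skew-adjoint operator with unbounded imaginary spectrum generates a unitary group, not an analytic semigroup; no sector $\{\lambda:|\arg(\lambda-a)|\leqslant\phi\}$ with $\phi<\pi/2$ can contain that spectrum, and the resolvent bound $M/|\lambda-a|$ fails near $i\mathbb{R}$. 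This is not a repairable bookkeeping issue: it is precisely the point of thermoelastic plate theory that the uncoupled plate equation is conservative and non-analytic, and that analyticity of the coupled semigroup (results of Liu--Renardy and Lasiecka--Triggiani type) is produced \emph{by} the coupling $\pm(\varLambda^{\frac12}+I)$ between $v$ and $\theta$. Any argument treating that coupling as a perturbation of the uncoupled system is therefore structurally doomed. Separately, even granting sectoriality of $\mathbb{A}_0$, your relative-bound-zero claim is false: the graph norm of $\mathbb{A}_0$ controls $\|v\|_{Y^{1/2}}$ and $\|\theta\|_{Y^{1/2}}$ but not $\|\varLambda v\|_Y$ or $\|\varLambda\theta\|_Y$, so the interpolation inequality $\|\varLambda^{1/2}z\|_Y\leqslant\eta\|\varLambda z\|_Y+C_\eta\|z\|_Y$ cannot be invoked on $D(\mathbb{A}_0)$; the best available relative bound is of order one, which is not small enough for the perturbation theorems you cite.

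Your alternative route --- diagonalizing on the eigenbasis of $\varLambda$ (legitimate here, since every entry of the operator matrix is a function of $\varLambda$) and proving a uniform sectorial resolvent estimate for the resulting $3\times3$ matrices $A_n$ --- is the sound one, and is in the spirit of how such results are actually established; the paper itself simply defers to \cite[Theorem 3]{ABDS}. But in your write-up all of the mathematical content is hidden in the phrase ``one checks directly'': the uniform-in-$n$ bound $\|(\lambda-A_n)^{-1}\|\leqslant M/|\lambda-a|$ on a fixed sector, in the weighted norms induced by $X=Y^{\frac12}\times Y\times Y$, is exactly where the analyticity of the thermoelastic semigroup lives, and it requires a genuine computation (an explicit analysis of the characteristic polynomial of $A_n$, or a frequency-domain energy estimate). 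As written, the proposal does not establish the theorem.
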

\begin{proof}
For the proof see \cite[Theorem 3]{ABDS}.	
\end{proof}	
\begin{remark} The following startments are hold.
\begin{itemize}
\item[(i)] Zero is in the resolvent set of $\mathbb{A}$ and 
\[
\mathbb{A}^{-1}=\begin{pmatrix} -(\varLambda+I)^{-1}(\varLambda^{\frac{1}{2}}+I) & - (\varLambda+I)^{-1} & - (\varLambda+I)^{-1} \\ I & 0 & 0\\ -I & 0 & -(\varLambda^{\frac{1}{2}}+I)^{-1} \end{pmatrix}. 
\] 
\item[(ii)]
Denote by $X_{-1}$ the extrapolation space of $X=Y^{\frac{1}{2}}\times Y \times Y$ generated by the operator  $\mathbb{A}^{-1}$. The following equality holds
\[
X_{-1}= Y\times Y^{-\frac{1}{2}} \times Y^{-\frac{1}{2}}.
\] 
In fact, recall first that $X_{-1}$ is the completion of the normed space $(X,\|\mathbb{A}^{-1}\cdot\|)$. Note that
\[
\left\|\mathbb{A}^{-1} \Big(\begin{smallmatrix} u\\ v\\ \theta \end{smallmatrix}\Big)\right\|_X\leqslant C_1\left\|\Big(\begin{smallmatrix} u\\ v\\ \theta \end{smallmatrix}\Big)\right\|_{X_{-1}},
\]
for some constant $C_1>0$. Well as we have
\[
\begin{split}
\left\| \Big(\begin{smallmatrix} u\\ v \\ \theta \end{smallmatrix}\Big)\right\|_{X_{-1}}\leqslant C_2\left\|\mathbb{A}^{-1}\Big(\begin{smallmatrix} u\\ v \\ \theta \end{smallmatrix}\Big)\right\|_{X},
\end{split}
\]
for some constant $C_2>0$.

So we conclude that the completion of $(X,\|\mathbb{A}^{-1}\cdot\|_{X})$ and $(X,\|\cdot\|_{X_{-1}})$ coincide.
\end{itemize}
\end{remark}

Note that the operator $\mathbb{A}$ can be  extended to its closed $X_{-1}-$realization, see Amann \cite{A}, which we will still denote by the same symbol  so that $\mathbb{A}$ considered in $X_{-1}$ is then sectorial positive operator. Our next concern will be to obtain embedding of the spaces from the fractional powers scale $X_{\alpha-1}$, $\alpha\geqslant 0$, generated by $(\mathbb{A}, X_{-1})$.

\begin{remark}
Below we have a partial description of the fractional power spaces scale for $\mathbb{A}$: for convenience we denote $X$ by $X_0$, then
\[
X_0 \hookrightarrow X_{\alpha-1} \hookrightarrow X_{-1},\quad \mbox{for all}\ 0<\alpha<1,
\]
where
\[
X_{\alpha-1} = [X_{-1},X_0]_{\alpha} = Y^{\frac{\alpha}{2}} \times Y^{\frac{\alpha-1}{2}} \times Y^{\frac{\alpha-1}{2}},
\]
where $[\cdot,\cdot]_{\alpha}$ denotes the complex interpolation functor (see Triebel \cite{T}). The first equality follows from Theorem \ref{T24} (since $0\in \rho(\mathbb{A})$) see Amann \cite[Example 4.7.3 (b)]{A} and the second equality follows from Carvalho and Cholewa \cite[Proposition 2]{CC}.
\end{remark}


\subsection{Nonlinearities}

The behavior of the nonlinearity $F_{\varepsilon}$ was studied in \cite{ABDS}. The main results  are given below.

\begin{lemma}
\label{Lipschitz} 
Suppose that $f$ and $g$ satisfy the growth estimate (\ref{Dcrescimento}) and $\frac{1}{2}<s\leqslant1$. Then:
\begin{itemize}
\item[(i)] There exists $C> 0$, independent of $\varepsilon$, such that 
\begin{equation}
\label{equ0}
\left\| F_{\varepsilon}(w) \right\|_{\mathcal{H}}\leqslant C,  \quad \mbox{$w  \in X$ and $\varepsilon \in [0,\varepsilon_0]$}.
\end{equation}
\item[(ii)] For each $\varepsilon \in [0,\varepsilon_0]$, the map $F_{\varepsilon}: X \to  \mathcal{H}$ is globally Lipschitz, uniformly in $\varepsilon$. 
\item[(iii)] For each $w \in X$, we have
$$
\left\| F_{\varepsilon}(w)- F_{0}(w) \right\|_{ \mathcal{H}} \to 0, \quad \mbox{as $\varepsilon \to 0$}.
$$
Furthermore, this limit is uniform for $w\in X$ such that $\left\| w \right\|_{X}\leqslant R$, for some $R> 0$.
\item[(iv)] If $w_{\varepsilon} \to w$ in $X$, as $\varepsilon \to 0,$ then
$$
\left\| F_{\varepsilon}(w_{\varepsilon})- F_{0}(w) \right\|_{ \mathcal{H}} \to 0, \quad \mbox{as $\varepsilon \to 0$}.
$$
\end{itemize}
\end{lemma}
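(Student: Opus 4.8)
The plan is to reduce all four assertions to estimates for the single non-trivial slot of $F_\varepsilon$ and to feed into it two facts about the tubular neighbourhood $\omega_\varepsilon$ which, since $\Omega$ is smooth and $\frac12<s\leqslant1$, are available from the concentrating-terms literature cited in the Introduction (in particular they are the tools used in \cite{ABDS}, going back to \cite{arrieta}): \textbf{(C1)} there is $C>0$, independent of $\varepsilon$, with $\frac1\varepsilon\|\psi\|_{L^2(\omega_\varepsilon)}^2\leqslant C\|\psi\|_{H^s(\Omega)}^2$ for every $\psi\in H^s(\Omega)$; and \textbf{(C2)} there is $\tau(\varepsilon)\geqslant0$ with $\tau(\varepsilon)\to0$ as $\varepsilon\to0$ such that
\[
\Bigl|\tfrac1\varepsilon\int_{\omega_\varepsilon}\psi\varphi\,dx-\int_\Gamma\gamma(\psi)\gamma(\varphi)\,dS\Bigr|\leqslant\tau(\varepsilon)\,\|\psi\|_{H^s(\Omega)}\,\|\varphi\|_{H^s(\Omega)},\qquad \psi,\varphi\in H^s(\Omega).
\]
Since $\mathcal H=H^2(\Omega)\times H^{-s}(\Omega)\times L^2(\Omega)$ and the first and third components of $F_\varepsilon$ and $F_0$ vanish, each claim concerns the $H^{-s}(\Omega)$-norm of $f_\Omega(u)+\frac1\varepsilon\chi_{\omega_\varepsilon}g_\Omega(u)$, of $f_\Omega(u)+g_\Gamma(u)$, or of their differences, and each such norm is computed by duality, i.e.\ by testing \eqref{finterior}, \eqref{ginterior}, \eqref{gfronteira} against $\varphi\in H^s(\Omega)$ with $\|\varphi\|_{H^s(\Omega)}\leqslant1$. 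Throughout, the $u$-component of $w\in X$ lies in $Y^{1/2}\hookrightarrow H^2(\Omega)\hookrightarrow H^s(\Omega)$, so $\|u\|_{H^s(\Omega)}\leqslant c\|w\|_X$.

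For (i) I would use only the boundedness $|f|,|g|\leqslant K$ from \eqref{Dcrescimento}. By Cauchy--Schwarz, $|\langle f_\Omega(u),\varphi\rangle|\leqslant K|\Omega|^{1/2}\|\varphi\|_{L^2}\leqslant c_1$; by the trace theorem ($s>\frac12$), $|\langle g_\Gamma(u),\varphi\rangle|\leqslant K|\Gamma|^{1/2}\|\gamma(\varphi)\|_{L^2(\Gamma)}\leqslant c_2$; and for the concentrated term, Cauchy--Schwarz on $\omega_\varepsilon$ together with $|\omega_\varepsilon|\leqslant k|\Gamma|\varepsilon$ and (C1) give $\bigl|\tfrac1\varepsilon\int_{\omega_\varepsilon}g(u)\varphi\bigr|\leqslant K\bigl(\tfrac{|\omega_\varepsilon|}{\varepsilon}\bigr)^{1/2}\bigl(\tfrac1\varepsilon\|\varphi\|_{L^2(\omega_\varepsilon)}^2\bigr)^{1/2}\leqslant c_3\|\varphi\|_{H^s}$, with $c_1,c_2,c_3$ independent of $u$ and of $\varepsilon$; taking suprema over $\|\varphi\|_{H^s}\leqslant1$ yields \eqref{equ0}. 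For (ii) the same computations, with $j(u)$ replaced by $j(u_1)-j(u_2)$ and $|j(u_1)-j(u_2)|\leqslant K|u_1-u_2|$ (again from \eqref{Dcrescimento}), give $\|f_\Omega(u_1)-f_\Omega(u_2)\|_{H^{-s}}\leqslant cK\|u_1-u_2\|_{L^2}$, $\|g_\Gamma(u_1)-g_\Gamma(u_2)\|_{H^{-s}}\leqslant cK\|u_1-u_2\|_{H^s}$ via the trace theorem, and, with a constant not depending on $\varepsilon$, $\|\tfrac1\varepsilon\chi_{\omega_\varepsilon}(g_\Omega(u_1)-g_\Omega(u_2))\|_{H^{-s}}\leqslant cK\|u_1-u_2\|_{H^s}$ by two applications of (C1); combined with $\|u_1-u_2\|_{H^s}\leqslant c\|w_1-w_2\|_X$ this produces a global Lipschitz constant for $F_\varepsilon:X\to\mathcal H$ that is the same for all $\varepsilon\in[0,\varepsilon_0]$.

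For (iii), the non-trivial slot of $F_\varepsilon(w)-F_0(w)$ is $\tfrac1\varepsilon\chi_{\omega_\varepsilon}g_\Omega(u)-g_\Gamma(u)$; since $u\in H^2(\Omega)$ and $g\in\mathcal C^2$ with bounded derivatives, the chain rule gives $g(u)\in H^1(\Omega)\hookrightarrow H^s(\Omega)$ with $\|g(u)\|_{H^s}\leqslant\|g(u)\|_{H^1}\leqslant c(1+\|u\|_{H^2})$. Applying (C2) with $\psi=g(u)$ and using $\gamma(g(u)\varphi)=\gamma(g(u))\gamma(\varphi)$,
\[
\|F_\varepsilon(w)-F_0(w)\|_{\mathcal H}=\sup_{\|\varphi\|_{H^s}\leqslant1}\Bigl|\tfrac1\varepsilon\int_{\omega_\varepsilon}g(u)\varphi-\int_\Gamma\gamma(g(u))\gamma(\varphi)\Bigr|\leqslant\tau(\varepsilon)\,\|g(u)\|_{H^s}\leqslant c\,\tau(\varepsilon)\,(1+\|u\|_{H^2}),
\]
which tends to $0$, uniformly once $\|w\|_X\leqslant R$. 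Finally (iv) follows from the triangle inequality $\|F_\varepsilon(w_\varepsilon)-F_0(w)\|_{\mathcal H}\leqslant\|F_\varepsilon(w_\varepsilon)-F_\varepsilon(w)\|_{\mathcal H}+\|F_\varepsilon(w)-F_0(w)\|_{\mathcal H}$, bounding the first term by $L\|w_\varepsilon-w\|_X\to0$ using (ii) and the second by (iii).

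The only genuinely non-routine ingredient is \textbf{(C2)}: the convergence of the concentrating integral to the boundary integral with a quantitative rate in the $H^s$/$H^{-s}$ duality. I expect this to be the main obstacle and would establish it by passing to normal (tubular) coordinates $x-\sigma\vec n(x)$ on $\omega_\varepsilon$ with Jacobian $J(x,\sigma)=1+O(\sigma)$, rewriting $\tfrac1\varepsilon\int_{\omega_\varepsilon}\psi\varphi\,dx$ as the $\sigma$-average over $[0,\varepsilon)$ of boundary integrals of $\psi(\cdot-\sigma\vec n)\varphi(\cdot-\sigma\vec n)J$, and then splitting off the term with $J-1$ (of order $\varepsilon$) and the oscillation terms $\psi(\cdot-\sigma\vec n)-\psi$, $\varphi(\cdot-\sigma\vec n)-\varphi$, which are controlled by the fractional translation/trace estimates valid for $s>\frac12$; each piece is bounded by a power of $\varepsilon$ times $\|\psi\|_{H^s}\|\varphi\|_{H^s}$. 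Alternatively — and this is the route to be followed here — one simply invokes this concentrating-integral estimate from the references already cited, notably \cite{ABDS,arrieta}.
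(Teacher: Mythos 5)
Your proposal is correct and follows essentially the same route as the paper's source for this lemma (\cite[Lemma 3]{ABDS}): everything reduces to the second component, estimated by duality against $\varphi\in H^{s}(\Omega)$ using the boundedness and Lipschitz continuity of $f,g$ from \eqref{Dcrescimento} together with the standard concentrating-integral inequalities for the strip $\omega_\varepsilon$ (your (C1) and (C2)), which are exactly the tools imported there from \cite{arrieta,AnibalAngela}. The only cosmetic difference is that the paper's analogous argument for the derivatives (Lemma \ref{resultsconvnonlinearity}(ii)) obtains the $H^{-s}(\Omega)$ convergence by first proving it in $H^{-1}(\Omega)$ and interpolating with the uniform $H^{-s_0}(\Omega)$ bound, rather than invoking the convergence estimate directly in the $H^{s}$--$H^{-s}$ duality as you do.
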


\begin{proof}
For the proof see \cite[Lemma 3]{ABDS}.
\end{proof}

From Lemma \ref{Lipschitz} follows that the map $F_{\varepsilon}: X \to \mathcal{H}$ is bounded, uniformly in $\varepsilon$, in bounded set of $X$, and it is locally Lipschitz, uniformly in $\varepsilon$. Thus, it follows from \cite[Theorem 4.2.1]{Ha} that given $w_0 \in X$, there is an unique local solution $w^{\varepsilon}(t,w_0)$ of  (\ref{AP1}), with $\varepsilon \in (0,\varepsilon_0]$, defined on a maximal interval of existence  $[0,t^{\varepsilon}_{max}(w_0))$, and there is an unique local solution $w(t,w_0)$ of  (\ref{AP1b}) defined on a maximal interval of existence $[0,t_{max}(w_0))$. Moreover, these solutions depend continuously on the initial data. 



We define the maps $Df_{\Omega},\frac{1}{\varepsilon}\chi_{\omega_{\varepsilon}}Dg_{\Omega},Dg_{\Gamma}:H^2(\Omega)\to \mathcal{L}(H^2(\Omega),H^{-s}(\Omega))$, with $\frac{1}{2}<s\leqslant 1$, respectively by
\begin{equation}
\label{Dfinterior}
\langle  Df_{\Omega}(u)\cdot h,\varphi \rangle =\int_{\Omega} f'(u)h \varphi dx,  \quad \mbox{$u,h\in H^{2}(\Omega)$  and $ \varphi \in H^{s}(\Omega)$}, 
\end{equation}
\begin{equation}
\label{Dginterior}
\langle  \dfrac{1}{\varepsilon}\chi_{\omega_{\varepsilon}}Dg_{\Omega}(u)\cdot h,\varphi \rangle =\frac{1}{\varepsilon} \int_{\omega_{\varepsilon}} g'(u)h \varphi dx,
 \quad \mbox{$ u,h\in H^{2}(\Omega)$  and $ \varphi \in H^{s}(\Omega)$}
\end{equation}
and
\begin{equation}
\label{Dgfronteira}
\langle  Dg_{\Gamma}(u)\cdot h,\varphi \rangle =\int_{\Gamma} \gamma(g'(u)h) \gamma(\varphi) dS, \quad \mbox{$ u,h\in H^{2}(\Omega)$  and $ \varphi \in H^{s}(\Omega)$},   
\end{equation}
where $\gamma: H^{s}(\Omega)\to  L^{2}(\Gamma)$ is the trace operator.

\begin{lemma}
\label{Lipschitz_01} 
Suppose that $f$ and $g$ satisfy the growth estimates (\ref{Dcrescimento}) and $\frac{1}{2}<s\leqslant1$. Then:
\begin{itemize}
\item[(i)] $f_{\Omega},\frac{1}{\varepsilon}\chi_{\omega_{\varepsilon}}g_{\Omega},g_{\Gamma}:H^{2}(\Omega) \to H^{-s}(\Omega)$ are  Fr\'echet differentiable, uniformly in $\varepsilon$, and your Fr\'echet differentials are respectively given by (\ref{Dfinterior}), (\ref{Dginterior}) and (\ref{Dgfronteira}). Consequently, for each $\varepsilon\in[0,\varepsilon_0]$, $F_\varepsilon:X\to \mathcal{H}$ is also Fr\'echet differentiable, uniformly in $\varepsilon$;
\item[(ii)] $Df_{\Omega},\frac{1}{\varepsilon}\chi_{\omega_{\varepsilon}}Dg_{\Omega},Dg_{\Gamma}:H^2(\Omega)\to \mathcal{L}(H^2(\Omega),H^{-s}(\Omega))$ are globally Lipschitz, uniformly in $\varepsilon$. Consequently, for each $\varepsilon\in[0,\varepsilon_0]$, $DF_\varepsilon: X\to \mathcal{L}(X,\mathcal{H})$ is also globally Lipschitz, uniformly in $\varepsilon$.
\end{itemize}
\end{lemma}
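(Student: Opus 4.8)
The plan is to derive everything from two elementary scalar estimates for $f$ and $g$ and then transfer them to the function spaces by means of Sobolev embeddings in dimension five, the trace theorem, and the concentrated-integral inequalities already established in \cite{ABDS}. Start with a reduction: $F_\varepsilon$ and its candidate differential act only through the first coordinate of $w=(u,v,\theta)$ and place the outcome in the middle slot of $\mathcal{H}=H^2(\Omega)\times H^{-s}(\Omega)\times L^2(\Omega)$, while $\|u\|_{H^2(\Omega)}\leqslant C\|w\|_X$ because $Y^{\frac12}=\{u\in H^2(\Omega):\partial u/\partial\vec{n}=0\}$ carries an equivalent norm; hence the assertions about $F_\varepsilon$ and $DF_\varepsilon$ reduce to the corresponding ones for the three operators $f_\Omega$, $\frac1\varepsilon\chi_{\omega_\varepsilon}g_\Omega$ and $g_\Gamma$ (for $\varepsilon\in[0,\varepsilon_0]$, $F_\varepsilon$ is built linearly from $f_\Omega$ together with $\frac1\varepsilon\chi_{\omega_\varepsilon}g_\Omega$ when $\varepsilon>0$, and with $g_\Gamma$ when $\varepsilon=0$). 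From the growth hypothesis \eqref{Dcrescimento} one extracts, for $j\in\{f,g\}$ and $a,b\in\mathbb{R}$, the two pointwise facts $|j(a+b)-j(a)-j'(a)b|\leqslant\frac K2 b^2$ (Taylor's formula together with $|j''|\leqslant K$) and $|j'(a_1)-j'(a_2)|\leqslant K|a_1-a_2|$, besides $|j'(a)|\leqslant K$; nothing else about the nonlinearities will be used.

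For part (i), take first $f_\Omega$. Testing the remainder $f_\Omega(u+h)-f_\Omega(u)-Df_\Omega(u)h$ against $\varphi\in H^s(\Omega)$ gives, by \eqref{finterior} and \eqref{Dfinterior}, the integral $\int_\Omega[f(u+h)-f(u)-f'(u)h]\varphi\,dx$, which the first pointwise bound and H\"older's inequality (exponents $(4,4,2)$) estimate by $\frac K2\|h\|_{L^4(\Omega)}^2\|\varphi\|_{L^2(\Omega)}$; since $H^2(\Omega)\hookrightarrow L^4(\Omega)$ and $H^s(\Omega)\hookrightarrow L^2(\Omega)$ in dimension five, this is $\leqslant CK\|h\|_{H^2(\Omega)}^2\|\varphi\|_{H^s(\Omega)}$. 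Taking the supremum over $\|\varphi\|_{H^s(\Omega)}\leqslant1$ shows at once that $Df_\Omega(u)\in\mathcal{L}(H^2(\Omega),H^{-s}(\Omega))$ given by \eqref{Dfinterior} is the Fr\'echet differential, the remainder being $O(\|h\|_{H^2(\Omega)}^2)=o(\|h\|_{H^2(\Omega)})$ with a constant independent of $u$ (and trivially of $\varepsilon$). The boundary operator $g_\Gamma$ is handled identically, with $\int_\Gamma\gamma(\,\cdot\,)\gamma(\varphi)\,dS$ replacing the volume integral (see \eqref{gfronteira}, \eqref{Dgfronteira}) and the trace embeddings $\gamma:H^2(\Omega)\to H^{3/2}(\Gamma)\hookrightarrow L^4(\Gamma)$ and $\gamma:H^s(\Omega)\to L^2(\Gamma)$, $\frac12<s\leqslant1$, in place of the interior Sobolev embeddings.

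The main obstacle is the concentrating operator $\frac1\varepsilon\chi_{\omega_\varepsilon}g_\Omega$, where the same computation (using \eqref{ginterior}, \eqref{Dginterior}) leaves the quantity $\frac K{2\varepsilon}\int_{\omega_\varepsilon}|h|^2|\varphi|\,dx$, and one must bound it by $CK\|h\|_{H^2(\Omega)}^2\|\varphi\|_{H^s(\Omega)}$ with $C$ \emph{independent of $\varepsilon$}; here the geometry of $\omega_\varepsilon$ (with $|\omega_\varepsilon|=O(\varepsilon)$ collapsing onto $\Gamma$) is essential. I would invoke the concentrated-integral estimates of \cite{ABDS} (in the spirit of \cite{arrieta,anibal,AnibalAngela}): bounds of the type $\frac1\varepsilon\|v\|_{L^p(\omega_\varepsilon)}^p\leqslant C\|v\|_{W^{1,p}(\Omega)}^p$ and, for $\theta>\frac12$, the trace-type inequality $\frac1\varepsilon\|v\|_{L^2(\omega_\varepsilon)}^2\leqslant C\|v\|_{H^\theta(\Omega)}^2$, all with $C$ uniform in $\varepsilon$. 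Splitting the factor $1/\varepsilon$ by H\"older so that $h$ (which carries the full $H^2$-regularity available in $\mathbb{R}^5$) absorbs the larger power and $\varphi\in H^s$, $s>\frac12$, the remaining one, one reaches $\frac1\varepsilon\int_{\omega_\varepsilon}|h|^2|\varphi|\,dx\leqslant C\|h\|_{H^2(\Omega)}^2\|\varphi\|_{H^s(\Omega)}$ uniformly in $\varepsilon$; passing to the limit $\varepsilon\to0$ reproduces the same bound with $g_\Gamma$. Running the argument with $|g'(u_1)-g'(u_2)|\leqslant K|u_1-u_2|$ in place of the Taylor remainder shows also that $\frac1\varepsilon\chi_{\omega_\varepsilon}Dg_\Omega(u)$, given by \eqref{Dginterior}, is bounded in $\mathcal{L}(H^2(\Omega),H^{-s}(\Omega))$ uniformly in $\varepsilon$, which completes (i).

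For part (ii), for $u_1,u_2,h\in H^2(\Omega)$ and $\varphi\in H^s(\Omega)$ the pairing $\langle(Df_\Omega(u_1)-Df_\Omega(u_2))h,\varphi\rangle=\int_\Omega(f'(u_1)-f'(u_2))h\varphi\,dx$ is, by $|f'(u_1)-f'(u_2)|\leqslant K|u_1-u_2|$ and H\"older with exponents $(4,4,2)$, at most $K\|u_1-u_2\|_{L^4(\Omega)}\|h\|_{L^4(\Omega)}\|\varphi\|_{L^2(\Omega)}\leqslant CK\|u_1-u_2\|_{H^2(\Omega)}\|h\|_{H^2(\Omega)}\|\varphi\|_{H^s(\Omega)}$, so $Df_\Omega$ is globally Lipschitz into $\mathcal{L}(H^2(\Omega),H^{-s}(\Omega))$; the trace embeddings give the analogous bound for $Dg_\Gamma$, and the concentrated-integral estimates of the previous paragraph give it for $\frac1\varepsilon\chi_{\omega_\varepsilon}Dg_\Omega$, all with constants independent of $\varepsilon$. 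Since, for each $\varepsilon\in[0,\varepsilon_0]$, $DF_\varepsilon$ is obtained linearly from these operators, it follows that $DF_\varepsilon:X\to\mathcal{L}(X,\mathcal{H})$ is globally Lipschitz, uniformly in $\varepsilon$. The one genuinely delicate ingredient is the uniform-in-$\varepsilon$ control of the concentrating integrals tested against the negative-order space $H^{-s}$ near the critical Sobolev exponents of $\mathbb{R}^5$; everything else is bookkeeping with Taylor's formula and H\"older's inequality.
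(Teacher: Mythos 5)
The paper does not prove this lemma here: it simply cites \cite[Lemmas 4 and 5]{ABDS}, so your proposal is being measured against the standard argument of that reference rather than against an in-text proof. Your outline is exactly that standard argument and is correct in structure: reduce to the three scalar operators, use the pointwise Taylor bound $|j(a+b)-j(a)-j'(a)b|\leqslant \frac{K}{2}b^2$ and the Lipschitz bound $|j'(a_1)-j'(a_2)|\leqslant K|a_1-a_2|$, and transfer them via H\"older with exponents $(4,4,2)$, the embeddings $H^2(\Omega)\hookrightarrow L^4(\Omega)$ and $\gamma:H^2(\Omega)\to H^{3/2}(\Gamma)\hookrightarrow L^4(\Gamma)$ (both valid in dimension $5$), and the concentrated-integral inequalities for $\omega_\varepsilon$. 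The quadratic remainder gives Fr\'echet differentiability with $\varepsilon$-independent constants, and the same H\"older split gives the global Lipschitz bound for the differentials.

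The one point you should tighten is the concentration estimate for the factor carrying $h$. With the natural split $(p,q)=(4,2)$ you need $\frac{1}{\varepsilon}\|h\|_{L^4(\omega_\varepsilon)}^4\leqslant C\|h\|_{H^2(\Omega)}^4$, but the inequality you quote, $\frac{1}{\varepsilon}\|v\|_{L^p(\omega_\varepsilon)}^p\leqslant C\|v\|_{W^{1,p}(\Omega)}^p$, is not directly applicable: in $\mathbb{R}^5$ one has $H^2(\Omega)\hookrightarrow W^{1,10/3}(\Omega)$ only, not $W^{1,4}(\Omega)$. What is needed is the fractional version from \cite{arrieta,AnibalAngela}, namely $\frac{1}{\varepsilon}\|v\|_{L^p(\omega_\varepsilon)}^p\leqslant C\|v\|_{W^{\sigma,p}(\Omega)}^p$ for $\sigma>\frac{1}{p}$, combined with $H^2(\Omega)\hookrightarrow W^{3/4,4}(\Omega)$ (and $\frac34>\frac14$). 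Note also that you cannot dodge this by lowering $p$ to $10/3$: the complementary exponent for $\varphi$ would then be $q=\frac52$, and the required bound $\frac{1}{\varepsilon}\|\varphi\|_{L^{5/2}(\omega_\varepsilon)}^{5/2}\leqslant C\|\varphi\|_{H^s(\Omega)}^{5/2}$ fails for $s$ near $\frac12$. So the $(4,2)$ split with the fractional concentration inequality is essentially forced; with that correction the argument closes, and the $\varepsilon$-power bookkeeping $\varepsilon^{2/4+1/2-1}=\varepsilon^0$ confirms uniformity in $\varepsilon$.
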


\begin{proof}
For the proof of item $(i)$ see \cite[Lemma 4]{ABDS}, and  item $(ii)$ see \cite[Lemma 5]{ABDS}.
\end{proof}

Under the assumptions of Lemma \ref{Lipschitz_01}, we have that the map $F_\varepsilon$ is continuously Fr\'echet differentiable. Now, it follows from \cite[Theorem 4.2.1]{Ha} that the solutions of \eqref{AP1} and \eqref{AP1b} are continuously differentiable with respect to initial conditions.

Now, we prove a result of uniform boundedness and convergence of the Fr\'echet differential of the nonlinearity $F_{\varepsilon}$.

\begin{lemma} \label{resultsconvnonlinearity}
Suppose that $f$ and $g$ satisfy the growth estimates (\ref{Dcrescimento}) and $\frac{1}{2}<s\leqslant 1$. Then:
\begin{enumerate}
\item[(i)] There exists $k>0$, independent of $\varepsilon$, such that
\[
\|DF_{\varepsilon}(w)\|_{\mathcal{L}(X,\mathcal{H})}\leqslant k, \quad \mbox{$w \in X$ and $\varepsilon \in [0, \varepsilon_{0}]$}. 
\]

\item[(ii)] For each $w\in X$, we have
\[
\|DF_{\varepsilon}(w)-DF_{0}(w)\|_{\mathcal{L}(X,\mathcal{H})}\to 0, \quad \mbox{as $\varepsilon\to0$},
\]
and this limit is uniform for $w\in X$ such that $\|w\|_{X}\leqslant R$, for some $R>0$.

\item[(iii)] If $w_{\varepsilon}\to w$ in $X$, as $\varepsilon\to 0$, then
\[
\|DF_{\varepsilon}(w_{\varepsilon})-DF_{0}(w)\|_{\mathcal{L}(X,\mathcal{H})}\to 0, \quad \mbox{as $\varepsilon\to0$}.
\]

\item[(iv)] If $w_{\varepsilon}\to w$ in $X$, as $\varepsilon\to 0$, and ${\bf h}_\varepsilon\to {\bf h}$ in $X$, as $\varepsilon\to 0$, then
\[
\|DF_{\varepsilon}(w_{\varepsilon}) {\bf h}_{\varepsilon}-DF_{0}(w) {\bf h}\|_{\mathcal{H}}\to 0, \quad \mbox{as $\varepsilon\to0$}.
\]
\end{enumerate}
\end{lemma}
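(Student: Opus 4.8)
The plan is to exploit that $DF_\varepsilon$ inherits the triangular structure of $F_\varepsilon$: for $w=(u,v,\theta)^{T}$ and $\mathbf{h}=(h_1,h_2,h_3)^{T}$ in $X$, formulas \eqref{Dfinterior}--\eqref{Dgfronteira} give
\[
DF_\varepsilon(w)\mathbf{h}=\Big(\begin{smallmatrix}0\\ Df_\Omega(u)h_1+\frac{1}{\varepsilon}\chi_{\omega_\varepsilon}Dg_\Omega(u)h_1\\ 0\end{smallmatrix}\Big),\qquad DF_0(w)\mathbf{h}=\Big(\begin{smallmatrix}0\\ Df_\Omega(u)h_1+Dg_\Gamma(u)h_1\\ 0\end{smallmatrix}\Big),
\]
so that every assertion reduces to $H^{-s}(\Omega)$-estimates for $Df_\Omega(u)$, $\frac{1}{\varepsilon}\chi_{\omega_\varepsilon}Dg_\Omega(u)$ and $Dg_\Gamma(u)$ applied to $h_1$, bearing in mind that $\|\mathbf{h}\|_X$ controls $\|h_1\|_{H^2(\Omega)}$ and $\|w\|_X$ controls $\|u\|_{H^2(\Omega)}$. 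The key observation is that, by \eqref{Dcrescimento}, $f'$ and $g'$ obey the same growth bounds as $f$ and $g$ (namely $|f'|,|g'|\leqslant K$ and $|f''|,|g''|\leqslant K$); consequently each of the three estimates is obtained by repeating the corresponding computation of \cite[Lemma 3]{ABDS} for $F_\varepsilon$ with $f,g$ replaced by $f',g'$ and an extra factor $h_1\in H^2(\Omega)$ carried through. I would also record at the outset the product estimate $g'(u)h_1\in H^1(\Omega)$ with $\|g'(u)h_1\|_{H^1(\Omega)}\leqslant C(1+\|u\|_{H^2(\Omega)})\|h_1\|_{H^2(\Omega)}$ --- which uses $|g''|\leqslant K$, $\nabla u\in H^1(\Omega)\hookrightarrow L^{10/3}(\Omega)$, $h_1\in H^2(\Omega)\hookrightarrow L^{10}(\Omega)$ in dimension five, and the Leibniz rule --- together with its consequence that $g'(u)h_1$ has a trace on $\Gamma$ with $|\gamma(g'(u)h_1)|\leqslant K|\gamma(h_1)|$ a.e.\ on $\Gamma$ (since $|g'(u)|\leqslant K$ a.e.).

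For (i) I would test against $\varphi\in H^s(\Omega)$, $\frac{1}{2}<s\leqslant1$, and bound the three terms separately, exactly as in \cite[Lemma 3]{ABDS}: the interior term by $|\langle Df_\Omega(u)h_1,\varphi\rangle|\leqslant K\|h_1\|_{L^2}\|\varphi\|_{L^2}\leqslant CK\|h_1\|_{H^2}\|\varphi\|_{H^s}$; the boundary term, using the trace inequality $\|\gamma(\cdot)\|_{L^2(\Gamma)}\leqslant C\|\cdot\|_{H^s(\Omega)}$ ($s>\frac{1}{2}$) and the pointwise trace bound above, by $|\langle Dg_\Gamma(u)h_1,\varphi\rangle|\leqslant K\|\gamma h_1\|_{L^2(\Gamma)}\|\gamma\varphi\|_{L^2(\Gamma)}\leqslant C\|h_1\|_{H^2}\|\varphi\|_{H^s}$; and the concentrated term, using $|g'(u)|\leqslant K$ a.e.\ and the uniform-in-$\varepsilon$ boundary-strip estimate $\|\psi\|_{L^2(\omega_\varepsilon)}^2\leqslant C\varepsilon\|\psi\|_{H^s(\Omega)}^2$ (valid for $s>\frac{1}{2}$; see \cite{ABDS}), by $|\langle\frac{1}{\varepsilon}\chi_{\omega_\varepsilon}Dg_\Omega(u)h_1,\varphi\rangle|\leqslant\frac{K}{\varepsilon}\|h_1\|_{L^2(\omega_\varepsilon)}\|\varphi\|_{L^2(\omega_\varepsilon)}\leqslant C\|h_1\|_{H^2}\|\varphi\|_{H^s}$. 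Since none of these constants depends on $u$ or $\varepsilon$, taking suprema over the unit balls produces the constant $k$ of (i), valid for all $w\in X$ and $\varepsilon\in[0,\varepsilon_0]$.

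For (ii), after cancelling the common interior term it would suffice to show $\|\frac{1}{\varepsilon}\chi_{\omega_\varepsilon}Dg_\Omega(u)-Dg_\Gamma(u)\|_{\mathcal{L}(H^2(\Omega),H^{-s}(\Omega))}\to0$. For $h_1\in H^2(\Omega)$ and $\varphi\in H^s(\Omega)$,
\[
\big\langle\big(\tfrac{1}{\varepsilon}\chi_{\omega_\varepsilon}Dg_\Omega(u)-Dg_\Gamma(u)\big)h_1,\varphi\big\rangle=\tfrac{1}{\varepsilon}\int_{\omega_\varepsilon}\big(g'(u)h_1\big)\varphi\,dx-\int_\Gamma\gamma\big(g'(u)h_1\big)\gamma(\varphi)\,dS,
\]
which is precisely the quantity controlled by the concentration-convergence estimate of \cite{ABDS} applied with $\psi=g'(u)h_1\in H^1(\Omega)$: it is bounded by $C\tau(\varepsilon)\|g'(u)h_1\|_{H^1(\Omega)}\|\varphi\|_{H^s(\Omega)}$ for some $\tau(\varepsilon)\to0$. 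Combined with $\|g'(u)h_1\|_{H^1}\leqslant C(1+\|u\|_{H^2})\|h_1\|_{H^2}$ this gives $\|\frac{1}{\varepsilon}\chi_{\omega_\varepsilon}Dg_\Omega(u)-Dg_\Gamma(u)\|_{\mathcal{L}}\leqslant C\tau(\varepsilon)(1+\|u\|_{H^2})$, which proves (ii) and shows the convergence is uniform for $\|w\|_X\leqslant R$. Then (iii) follows by splitting $\|DF_\varepsilon(w_\varepsilon)-DF_0(w)\|_{\mathcal{L}(X,\mathcal{H})}\leqslant\|DF_\varepsilon(w_\varepsilon)-DF_0(w_\varepsilon)\|_{\mathcal{L}(X,\mathcal{H})}+\|DF_0(w_\varepsilon)-DF_0(w)\|_{\mathcal{L}(X,\mathcal{H})}$ and using (ii) on the first summand (the sequence $\{w_\varepsilon\}$ being bounded) and the uniform Lipschitz continuity of $DF_0$ from Lemma \ref{Lipschitz_01}(ii) on the second; and (iv) follows from $\|DF_\varepsilon(w_\varepsilon)\mathbf{h}_\varepsilon-DF_0(w)\mathbf{h}\|_{\mathcal{H}}\leqslant\|DF_\varepsilon(w_\varepsilon)\|_{\mathcal{L}(X,\mathcal{H})}\|\mathbf{h}_\varepsilon-\mathbf{h}\|_X+\|DF_\varepsilon(w_\varepsilon)-DF_0(w)\|_{\mathcal{L}(X,\mathcal{H})}\|\mathbf{h}\|_X$ together with (i) and (iii).

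The one step that goes beyond bookkeeping is the product regularity $g'(u)h_1\in H^1(\Omega)$ with controlled norm, together with the resulting trace bound, since this is exactly what allows the concentration and boundary-strip estimates of \cite{ABDS} to be invoked with $\psi=g'(u)h_1$; it rests on the growth bound $|g''|\leqslant K$ and the Sobolev product estimates available in $\mathbb{R}^5$. Everything else is a faithful transcription to the derivatives of the estimates already carried out in \cite{ABDS} for $F_\varepsilon$ itself.
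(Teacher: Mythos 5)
Your overall strategy coincides with the paper's: reduce every assertion to the scalar operators $Df_{\Omega}$, $\frac{1}{\varepsilon}\chi_{\omega_{\varepsilon}}Dg_{\Omega}$, $Dg_{\Gamma}$ acting on $h_1$, establish the three uniform bounds for item (i) by redoing the estimates of \cite{ABDS} with $f,g$ replaced by $f',g'$, cancel the common interior term in item (ii), and close items (iii) and (iv) by triangle inequalities combined with the uniform Lipschitz continuity of $DF_{\varepsilon}$ and items (i)--(ii). Items (i), (iii), (iv) are fine (for (iii) the paper splits as $\|DF_{\varepsilon}(w_{\varepsilon})-DF_{\varepsilon}(w)\|+\|DF_{\varepsilon}(w)-DF_{0}(w)\|$ rather than your $\|DF_{\varepsilon}(w_{\varepsilon})-DF_{0}(w_{\varepsilon})\|+\|DF_{0}(w_{\varepsilon})-DF_{0}(w)\|$; both work, yours using the uniformity over bounded sets in (ii)).

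The one step I would not accept as written is the central estimate in (ii). You assert that the concentration-convergence lemma directly yields
\[
\Big|\tfrac{1}{\varepsilon}\int_{\omega_{\varepsilon}}g'(u)h_1\varphi\,dx-\int_{\Gamma}\gamma(g'(u)h_1)\gamma(\varphi)\,dS\Big|\leqslant C\tau(\varepsilon)\|g'(u)h_1\|_{H^{1}(\Omega)}\|\varphi\|_{H^{s}(\Omega)}
\]
for test functions $\varphi\in H^{s}(\Omega)$ with $s>\frac12$. The concentration estimate actually available (the paper invokes \cite[Lemma 5.2]{AnibalAngela}) gives a vanishing rate $M(\varepsilon,R)$ only against $\varphi\in H^{1}(\Omega)$: with $\psi=g'(u)h_1\in H^{1}(\Omega)$ and $\varphi\in H^{s}(\Omega)$ with $s$ close to $\frac12$, the product $\psi\varphi$ need not belong to $W^{1,1}(\Omega)$ in dimension five, so the usual normal-coordinate argument that produces the decay does not apply directly. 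What one obtains at this stage is therefore only convergence of $\frac{1}{\varepsilon}\chi_{\omega_{\varepsilon}}Dg_{\Omega}(u)-Dg_{\Gamma}(u)$ to zero in $\mathcal{L}(H^{2}(\Omega),H^{-1}(\Omega))$. The paper then upgrades this to $\mathcal{L}(H^{2}(\Omega),H^{-s}(\Omega))$ for $\frac12<s<1$ by interpolation: choosing an auxiliary $s_0$ with $\frac12<s_0<s$, one writes $\|\cdot\|_{H^{-s}}\leqslant\|\cdot\|^{\theta}_{H^{-s_0}}\,\|\cdot\|^{1-\theta}_{H^{-1}}$ and combines the uniform bound $k_2+k_3$ of part (i) in $H^{-s_0}$ with the decay in $H^{-1}$. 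Your asserted $H^{s}$-tested estimate is in fact true, but its proof is precisely this interpolation step, which is missing from your argument; you should either insert it or verify that a concentration lemma tested against $H^{s}$ functions with an explicit rate is genuinely proved in the reference you cite. Your auxiliary product estimate $\|g'(u)h_1\|_{H^{1}(\Omega)}\leqslant C(1+\|u\|_{H^{2}(\Omega)})\|h_1\|_{H^{2}(\Omega)}$ is correct and is indeed what makes the $H^{1}$-based concentration lemma applicable.
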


\begin{proof}
\noindent {\it (i)} Let $w=\Big(\begin{smallmatrix}u\\ v\\ \theta\end{smallmatrix}\Big) \in X$ and $\varepsilon \in [0, \varepsilon_{0}]$, we have
$$
\|DF_{\varepsilon}(w)\|_{\mathcal{L}(X,\mathcal{H})}=\displaystyle \sup_{\begin{array}{c}
{\bf h} \in X\\
\left\|{\bf h}\right\|_{X}=1
\end{array}} 
\| DF_{\varepsilon}(w) {\bf h} \|_{\mathcal{H} }.
$$
Note that,  for each ${\bf h} = \Big(\begin{smallmatrix} h_1\\ h_2\\ h_3\end{smallmatrix}\Big) \in X$,
$$
\begin{array}{lll}
\| DF_{\varepsilon}(w) {\bf h} \|_{\mathcal{H}}=\left\|Df_{\Omega}(u)\cdot h_1+\dfrac{1}{\varepsilon}\chi_{\omega_{\varepsilon}}Dg_{\Omega}(u)\cdot h_1\right\|_{H^{-s}(\Omega)}, \quad \varepsilon \in (0,\varepsilon_0], \\
\\
\| DF_{0}(w) {\bf h} \|_{\mathcal{H}}=\|Df_{\Omega}(u)\cdot h_1+Dg_{\Gamma}(u)\cdot h_1\|_{H^{-s}(\Omega)},
\end{array}
$$
where the maps $Df_{\Omega},\frac{1}{\varepsilon}\chi_{\omega_{\varepsilon}}Dg_{\Omega}$ and $Dg_{\Gamma}$ are given respectively by \eqref{Dfinterior}, \eqref{Dginterior} and \eqref{Dgfronteira}.

Similarly to \cite[Lemma 4]{ABDS}, we have that there exist $k_1, k_2,k_3>0$ independents of $\varepsilon$ such that

\begin{equation}\label{F1}
\|Df_{\Omega}(u)\cdot h_1\|_{H^{-s}(\Omega)}\leqslant k_1\|h_1\|_{H^2(\Omega)}, \quad  h_1 \in H^2(\Omega),
\end{equation}

\begin{equation}\label{F2}
\Big \|\dfrac{1}{\varepsilon}\chi_{\omega_{\varepsilon}}Dg_{\Omega}(u)\cdot h_1 \Big \|_{H^{-s}(\Omega)}\leqslant k_2 \|h_1\|_{H^2(\Omega)} , \quad  h_1 \in H^2(\Omega),
\end{equation}

\begin{equation}\label{F3}
\|Dg_{\Gamma}(u)\cdot h_1\|_{H^{-s}(\Omega)}\leqslant k_3 \|h_1\|_{H^2(\Omega)}  , \quad  h_1 \in H^2(\Omega).
\end{equation}
Therefore, the result follows from \eqref{F1}, \eqref{F2} and \eqref{F3}.

\vspace{0.2cm}
\noindent {\it (ii)} For each $w\in X$, notice that
$$
\|DF_{\varepsilon}(w)-DF_{0}(w)\|_{\mathcal{L}(X,\mathcal{H})}= \left\|\frac{1}{\varepsilon}\chi_{\omega_{\varepsilon}}Dg_{\Omega}(u)-Dg_{\Gamma}(u)\right\|_{\mathcal{L}(H^{2}(\Omega),H^{-s}(\Omega))}.
$$

As in \cite[Lemma 5.2]{AnibalAngela} we can prove that there exists $M(\varepsilon,R)$ with $M(\varepsilon,R)\to 0$ as $\varepsilon \to 0$ such that
$$
\begin{array}{lll}
\displaystyle \left| \langle \dfrac{1}{\varepsilon}\chi_{\omega_{\varepsilon}}Dg_{\Omega}(u)\cdot h_1- Dg_{\Gamma}(u)\cdot h_1,\varphi\rangle  \right|  = \displaystyle \left| \frac{1}{\varepsilon}\int_{\omega_{\varepsilon}}g'(u)h_1\varphi dx -\int_{\Gamma}\gamma(g'(u)h_1)\gamma(\varphi) dS \right| \\

\leqslant  \displaystyle M(\varepsilon,R) \left\| h_1 \right\|_{H^{2}(\Omega)} \left\| \varphi \right\|_{H^{1}(\Omega)},\quad \mbox{$\forall h_1\in H^{2}(\Omega)$ and $\forall \varphi \in H^{1}(\Omega)$}.
\end{array}
$$
Thus, 
\begin{equation}
\label{F4}
\left\| \dfrac{1}{\varepsilon}\chi_{\omega_{\varepsilon}}Dg_{\Omega}(u)- Dg_{\Gamma}(u)\right\|_{\mathcal{L}(H^{2}(\Omega),H^{-1}(\Omega))}  \to 0, \quad \mbox{as $\varepsilon \to 0$},
\end{equation}
uniformly for $u\in H^{2}(\Omega)$ such that $\left\| u \right\|_{H^{2}(\Omega)}\leqslant R$.

Now, fix $\frac{1}{2} < s_0 < 1$. Then for any $s$ such that $-1<-s<-s_0<-\frac{1}{2}$,  using interpolation, \eqref{F2} and \eqref{F3}
we have
$$
\begin{array}{lll}
\left\| \dfrac{1}{\varepsilon}\chi_{\omega_{\varepsilon}}Dg_{\Omega}(u)\cdot h_1- Dg_{\Gamma}(u)\cdot h_1\right\|_{H^{-s}(\Omega)} \\

\leqslant \left\| \dfrac{1}{\varepsilon}\chi_{\omega_{\varepsilon}}Dg_{\Omega}(u)\cdot h_1- Dg_{\Gamma}(u)\cdot h_1\right\|^{\theta}_{H^{-s_0}(\Omega)}\left\| \dfrac{1}{\varepsilon}\chi_{\omega_{\varepsilon}}Dg_{\Omega}(u)\cdot h_1- Dg_{\Gamma}(u)\cdot h_1\right\|^{1-\theta}_{H^{-1}(\Omega)}\\

\leqslant  (k_2+k_3)^{\theta} \left\| \dfrac{1}{\varepsilon}\chi_{\omega_{\varepsilon}}Dg_{\Omega}(u)- Dg_{\Gamma}(u)\right\|^{1-\theta}_{\mathcal{L}(H^{2}(\Omega),H^{-1}(\Omega))}\|h_1\|_{H^{2}(\Omega)}, \quad \forall h_1\in H^{2}(\Omega),
\end{array}
$$
for some $0<\theta<1$. Thus using \eqref{F4}, we obtain
$$
\left\| \dfrac{1}{\varepsilon}\chi_{\omega_{\varepsilon}}Dg_{\Omega}(u)- Dg_{\Gamma}(u)\right\|_{\mathcal{L}(H^{2}(\Omega),H^{-s}(\Omega))}  \to 0, \quad \mbox{as $\varepsilon \to 0$},
$$
uniformly for $u\in H^{2}(\Omega)$ such that $\left\| u \right\|_{H^{2}(\Omega)}\leqslant R$.

\vspace{0.2 cm}

\noindent {\it (iii)} Using the item $(ii)$, the hypothesis $w_{\varepsilon}\to w$ in $X$, as $\varepsilon\to 0$, and from Lemma \ref{Lipschitz_01}, we have that there exists $L>0$ independent of $\varepsilon$ such that
\[
\begin{split}
\|DF_{\varepsilon}(w_{\varepsilon})-DF_{0}(w)\|_{\mathcal{L}(X,\mathcal{H})}&\leqslant  \displaystyle \|DF_{\varepsilon}(w_{\varepsilon})-DF_{\varepsilon}(w)\|_{\mathcal{L}(X,\mathcal{H})}+\|DF_{\varepsilon}(w)-DF_{0}(w)\|_{\mathcal{L}(X,\mathcal{H})}\\
&\leqslant \displaystyle  L\|w_{\varepsilon}-w\|_{X}+\|DF_{\varepsilon}(w)-DF_{0}(w)\|_{\mathcal{L}(X,\mathcal{H})}\to 0, \quad \mbox{as $\varepsilon\to0$}.
\end{split}
\]

\noindent {\it (iv)} We take $w_{\varepsilon}\to w$ in $X$, as $\varepsilon\to 0$, and ${\bf h}_{\varepsilon}\to {\bf h}$ in $X$, as $\varepsilon\to 0$. Using the items {\it (i)} and {\it (iii)}, we get
\[
\begin{split}
 \|DF_{\varepsilon}(w_{\varepsilon}) {\bf h}_{\varepsilon}-DF_{0}(w) {\bf h}\|_{\mathcal{H}}&\leqslant \displaystyle  \|DF_{\varepsilon}(w_{\varepsilon}) {\bf h}_{\varepsilon}-DF_{\varepsilon}(w_{\varepsilon}) {\bf h}\|_{\mathcal{H}}+\|DF_{\varepsilon}(w_{\varepsilon}) {\bf h}-DF_{0}(w) {\bf h}\|_{\mathcal{H}}\\
&\leqslant \displaystyle \|DF_{\varepsilon}(w_{\varepsilon})\|_{\mathcal{L}(X,\mathcal{H})}\|{\bf h}_{\varepsilon}-{\bf h}\|_{X}+\|DF_{\varepsilon}(w_{\varepsilon})-DF_{0}(w)\|_{\mathcal{L}(X,\mathcal{H})}\|{\bf h}\|_{X}\\
&\leqslant \displaystyle k\|{\bf h}_{\varepsilon}-{\bf h}\|_{X}+\|DF_{\varepsilon}(w_{\varepsilon})-DF_{0}(w)\|_{\mathcal{L}(X,\mathcal{H})}\|{\bf h}\|_{X}\to0,
\end{split} 
\]
as $\varepsilon\to0$.
\end{proof}


\section{Existence and  upper semicontinuity of attractors}\label{Sec:ExAtt_UpperSem}

From this section onwards we will be assuming all the previous hypotheses. 

In \cite[Section 3]{ABDS}  have been proven that the solutions of the problems \eqref{AP1} and \eqref{AP1b} are globally defined and we can define, for each $\varepsilon \in [0,\varepsilon_0]$, a nonlinear semigroup $\{S_\varepsilon(t):\ t\geqslant 0\}$ in $X$ by
\[
S_\varepsilon(t)w_0=w^\varepsilon(t,w_0),\quad t\geqslant 0,
\]
which it is given by the variation of constants formula
\[
S_\varepsilon(t)w_0=e^{\mathbb{A}t}w_0+\int_0^t e^{\mathbb{A}(t-s)}F_\varepsilon(S_\varepsilon(s)w_0)ds,\quad t\geqslant 0.
\]
Moreover, the semigroups associated to solutions are strongly bounded dissipativite.

To follows, we enunciate the main results obtained in \cite[Section 4]{ABDS}. First, we establish the existence, characterization and uniform boundedness of the global compact attractors for the nonlinear semigroups generated by our problems \eqref{AP1} and \eqref{AP1b}.

\begin{theorem}
For sufficiently small $\varepsilon\geqslant0$. We have:
\\
$(i)$ The parabolic problems \eqref{AP1} and \eqref{AP1b} have a global compact attractor $\mathcal{A}_\varepsilon$ and $\mathcal{A}_\varepsilon=W^u(\mathcal{E}_\varepsilon)$, where
\[
W^u(\mathcal{E}_\varepsilon)=\Big\{w\in X: S_\varepsilon(-t)w\ \mbox{is defined for}\ t\geqslant0\ \mbox{and}\ \lim_{t\to+\infty}\mbox{dist}(S_\varepsilon(-t)w,\mathcal{E}_\varepsilon)=0\},
\]
and $\mathcal{E}_\varepsilon$ denotes the set of equilibria of the problems \eqref{AP1} and \eqref{AP1b}. Moreover, $\mathcal{A}_\varepsilon$ is connected. 
\\
$(ii)$ The union of the global attractors $\bigcup_{\varepsilon\in[0,\varepsilon_0]}\mathcal{A}_\varepsilon$ is a bounded set in $X$.
\end{theorem}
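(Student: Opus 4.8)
The plan is to establish the two statements by combining standard infinite-dimensional dynamical-systems theory with the structural results already in place from \cite{ABDS}. For part $(i)$, the first step is to verify that each semigroup $\{S_\varepsilon(t):t\geqslant0\}$ is a gradient system (equivalently, that it possesses a Lyapunov functional), which is a consequence of the specific form of the thermoelastic plate system: the thermal dissipation $-\Delta\partial_t u + \partial_t u$ together with the damping term $-\Delta\theta+\theta$ in the heat equation provides an energy identity in which the energy functional
\[
E_\varepsilon(w)=\tfrac12\|v\|_Y^2+\tfrac12\|u\|_{Y^{1/2}}^2+\tfrac12\|\theta\|_Y^2-\int_\Omega G_\Omega(u)\,dx,
\]
decays along trajectories (with the appropriate boundary term $-\int_\Gamma G_\Gamma(u)\,dS$ or volume term $\frac1\varepsilon\int_{\omega_\varepsilon}G(u)\,dx$ in the two cases), where $G$ is a primitive of $g$. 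One then needs the standard lemma that for a gradient system with bounded set of equilibria and asymptotically compact dynamics, the global attractor coincides with the unstable set of the equilibria, $\mathcal{A}_\varepsilon=W^u(\mathcal{E}_\varepsilon)$; this is the classical LaSalle-type characterization (see e.g. Hale or Carvalho--Langa--Robinson). Connectedness of $\mathcal{A}_\varepsilon$ then follows because $X$ is a connected Banach space and, for a gradient system, the attractor is the union of the unstable manifolds together with the heteroclinic connections; alternatively one invokes the general fact that the global attractor of a semigroup on a connected phase space, when it is the attractor of a bounded connected neighborhood, is itself connected.

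For the asymptotic compactness needed above, the key is the splitting $S_\varepsilon(t)=e^{\mathbb{A}t}+ (\text{compact remainder})$: since $-\mathbb{A}$ is sectorial (Theorem \ref{T24}) and by the remarks has compact resolvent (the spectrum of $\varLambda$ is discrete), the linear semigroup $e^{\mathbb{A}t}$ is eventually compact-smoothing on the fractional scale, and $F_\varepsilon:X\to\mathcal{H}$ maps bounded sets to bounded sets uniformly in $\varepsilon$ by Lemma \ref{Lipschitz}$(i)$, with $\mathcal{H}$ compactly embedded into the relevant phase-space component; so the variation-of-constants term $\int_0^t e^{\mathbb{A}(t-s)}F_\varepsilon(S_\varepsilon(s)w_0)\,ds$ is compact. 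Combined with the strong bounded dissipativity already recorded (the existence of a bounded absorbing set uniform in $\varepsilon$), this yields the existence of the compact global attractor $\mathcal{A}_\varepsilon$ for each $\varepsilon\in[0,\varepsilon_0]$.

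For part $(ii)$, the uniform boundedness of $\bigcup_{\varepsilon\in[0,\varepsilon_0]}\mathcal{A}_\varepsilon$ follows from the fact that the absorbing set can be chosen independently of $\varepsilon$: the dissipativity estimate uses only the bound \eqref{equ0} on $\|F_\varepsilon(w)\|_{\mathcal{H}}$, which by Lemma \ref{Lipschitz}$(i)$ holds with a constant $C$ independent of $\varepsilon$, together with the exponential decay constants of $e^{\mathbb{A}t}$ which do not depend on $\varepsilon$. Hence there is a single ball $B_R\subset X$ absorbing all trajectories for every $\varepsilon$, and since $\mathcal{A}_\varepsilon\subset B_R$ by invariance and attraction, the union is bounded.

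The main obstacle, in my estimation, is not the abstract machinery but the verification that the system genuinely has gradient structure despite the coupling and the low regularity of the $\theta$-component — one must check carefully that the energy identity closes (i.e. that the cross terms $\Delta\theta$ and $\Delta\partial_t u$ cancel appropriately in the energy balance) and that the set of bounded complete orbits on which $E_\varepsilon$ is constant consists only of equilibria, which requires the backward-uniqueness / unique-continuation type argument specific to thermoelastic plates. Since the statement in the excerpt explicitly attributes these facts to \cite[Section 4]{ABDS}, I would cite that reference for the energy functional and the characterization, and devote the present argument mainly to assembling the pieces and emphasizing the $\varepsilon$-uniformity of all constants.
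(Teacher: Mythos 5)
The paper does not prove this theorem in situ --- it simply cites Theorems 4 and 5 of \cite{ABDS} --- and your outline (gradient structure via the natural energy functional, asymptotic compactness from the compact resolvent of $\mathbb{A}$ together with the smoothing of the analytic semigroup in the Duhamel term, the LaSalle-type characterization $\mathcal{A}_\varepsilon=W^u(\mathcal{E}_\varepsilon)$ for gradient systems, and an $\varepsilon$-uniform absorbing ball coming from the $\varepsilon$-uniform bound on $\|F_\varepsilon(w)\|_{\mathcal{H}}$) is exactly the argument that reference carries out, so your proposal is correct and takes essentially the same approach. The only slips are cosmetic: the volume term in $E_\varepsilon$ should involve a primitive of $f$ (the concentrated term is the one built from a primitive of $g$), and the compactness of the variation-of-constants term comes from the smoothing action of $e^{\mathbb{A}t}$ on the fractional power scale $X_{\alpha-1}$ rather than from a compact embedding of $\mathcal{H}$ into $X$ (which does not hold, since the second component of $\mathcal{H}$ is $H^{-s}(\Omega)$).
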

\begin{proof}
For the proof see \cite[Theorems 4 and 5]{ABDS}.
\end{proof}

Also, we establish the convergence of the  nonlinear semigroups as $\varepsilon \to 0$. 

\begin{proposition}\label{proposition4.5}
Under the above hypothesis, let ${\frac12}<s\leqslant 1$ and some fixed $\tau>0$.Then, there exists a function $C(\varepsilon)\geqslant0$ with $C(\varepsilon) \to 0$ as $\varepsilon \to 0$, such that for $w_\varepsilon \in \mathcal{A}_\varepsilon, \ \varepsilon \in (0,\varepsilon_0]$, we have
\[
\Big\| S_{\varepsilon}(t)w_\varepsilon-S_{0}(t)w_\varepsilon \Big\|_{X} \leqslant M(\tau)C(\varepsilon), \quad \forall \ t\in[0,\tau],
\]
for some constant $M(\tau)>0$.
\end{proposition}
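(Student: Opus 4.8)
The plan is to use the variation of constants formula for both semigroups together with the Gronwall inequality, exploiting the fact that $F_\varepsilon$ and $F_0$ are globally Lipschitz uniformly in $\varepsilon$ (Lemma \ref{Lipschitz}(ii)), that the difference $\|F_\varepsilon(w)-F_0(w)\|_{\mathcal H}$ goes to zero uniformly on bounded sets of $X$ (Lemma \ref{Lipschitz}(iii)), and that $\bigcup_{\varepsilon\in[0,\varepsilon_0]}\mathcal A_\varepsilon$ is bounded in $X$. First I would fix $w_\varepsilon\in\mathcal A_\varepsilon$ and write, for $t\in[0,\tau]$,
\[
S_\varepsilon(t)w_\varepsilon-S_0(t)w_\varepsilon=\int_0^t e^{\mathbb A(t-\sigma)}\big[F_\varepsilon(S_\varepsilon(\sigma)w_\varepsilon)-F_0(S_0(\sigma)w_\varepsilon)\big]\,d\sigma,
\]
since the linear part $e^{\mathbb A t}w_\varepsilon$ cancels. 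Then I split the integrand as
\[
F_\varepsilon(S_\varepsilon(\sigma)w_\varepsilon)-F_0(S_\varepsilon(\sigma)w_\varepsilon)+F_0(S_\varepsilon(\sigma)w_\varepsilon)-F_0(S_0(\sigma)w_\varepsilon),
\]
bounding the first difference by $C(\varepsilon):=\sup_{\|w\|_X\le R}\|F_\varepsilon(w)-F_0(w)\|_{\mathcal H}\to0$ (with $R$ a uniform bound on all orbits through the attractors, using invariance so $S_\varepsilon(\sigma)w_\varepsilon\in\mathcal A_\varepsilon$), and the second by $L\|S_\varepsilon(\sigma)w_\varepsilon-S_0(\sigma)w_\varepsilon\|_X$ via the global Lipschitz constant $L$ of $F_0$.

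The key estimate I need is the smoothing bound $\|e^{\mathbb A t}\|_{\mathcal L(\mathcal H,X)}\le M t^{-\beta}$ for an appropriate exponent $\beta<1$, which holds because $\mathbb A$ is sectorial (Theorem \ref{T24}) and $\mathcal H$ embeds into a fractional power space $X_{\alpha-1}$ at a level that is $\beta$ below $X=X_0$; concretely, since $\mathcal H=H^2(\Omega)\times H^{-s}(\Omega)\times L^2(\Omega)$ and $X_{\alpha-1}=Y^{\alpha/2}\times Y^{(\alpha-1)/2}\times Y^{(\alpha-1)/2}$, the second component $H^{-s}(\Omega)\supset Y^{(\alpha-1)/2}$ as soon as $(\alpha-1)/2\ge -s/2$, i.e. $\alpha\ge 1-s$, so for $\tfrac12<s\le1$ we may take $\alpha\in(1-s,1)$ and $\beta=1-\alpha<s\le 1$, giving an integrable singularity $t^{-\beta}$. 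With this, setting $\phi(t)=\|S_\varepsilon(t)w_\varepsilon-S_0(t)w_\varepsilon\|_X$ I obtain
\[
\phi(t)\le M\!\int_0^t (t-\sigma)^{-\beta}\,C(\varepsilon)\,d\sigma+M\!\int_0^t (t-\sigma)^{-\beta} L\,\phi(\sigma)\,d\sigma
\le \frac{M\tau^{1-\beta}}{1-\beta}\,C(\varepsilon)+ML\!\int_0^t (t-\sigma)^{-\beta}\phi(\sigma)\,d\sigma.
\]

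Finally I would invoke the singular Gronwall inequality (e.g. Henry \cite[Lemma 7.1.1]{He}) to conclude $\phi(t)\le \widetilde M(\tau)\,C(\varepsilon)$ for all $t\in[0,\tau]$, which is the claimed bound with $M(\tau):=\widetilde M(\tau)$ and $C(\varepsilon)$ as defined above. The main obstacle I anticipate is purely the bookkeeping around the first step: one must be careful that $S_0(t)w_\varepsilon$ is well defined on all of $[0,\tau]$ (it is, since solutions of \eqref{AP1b} are global) and that the relevant orbits stay in a fixed ball of $X$ independent of $\varepsilon$ so that the uniform convergence in Lemma \ref{Lipschitz}(iii) applies — this uses the uniform boundedness of $\bigcup_\varepsilon\mathcal A_\varepsilon$ together with the fact that $S_0(\sigma)w_\varepsilon$ stays in a bounded absorbing set on the finite interval $[0,\tau]$. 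Everything else is the standard sectorial-smoothing plus singular-Gronwall argument.
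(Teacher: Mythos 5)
Your overall strategy is the standard one and, as far as can be judged, the same as the proof the paper defers to (\cite[Proposition 2]{ABDS}): subtract the two variation-of-constants formulas so the linear terms cancel, split the integrand into $F_\varepsilon-F_0$ evaluated along the $\varepsilon$-orbit (controlled by Lemma \ref{Lipschitz}(iii) together with the invariance and uniform boundedness of the attractors) plus a Lipschitz term, use the smoothing of the analytic semigroup from $\mathcal{H}$ into $X$, and close with the singular Gronwall inequality. That skeleton is correct and all the ingredients you invoke are available in the paper.

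There is, however, one step that is written backwards: the justification of the smoothing estimate $\|e^{\mathbb{A}t}\|_{\mathcal{L}(\mathcal{H},X)}\leqslant Mt^{-\beta}$. What you need is the embedding $\mathcal{H}\hookrightarrow X_{\alpha-1}$, i.e.\ $H^{-s}(\Omega)\hookrightarrow Y^{\frac{\alpha-1}{2}}$ in the second component, so that $\|e^{\mathbb{A}t}h\|_{X}\leqslant\|e^{\mathbb{A}t}\|_{\mathcal{L}(X_{\alpha-1},X)}\|h\|_{X_{\alpha-1}}\lesssim t^{-(1-\alpha)}\|h\|_{\mathcal{H}}$. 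You instead established the reverse containment $Y^{\frac{\alpha-1}{2}}\subset H^{-s}(\Omega)$, which holds for $\alpha\geqslant 1-s$, and then used it as if it were the one required. Since $Y^{\frac{\alpha-1}{2}}$ sits at the level $H^{\alpha-1}(\Omega)$, the correct condition is $-s\geqslant\alpha-1$, i.e.\ $\alpha\leqslant 1-s$, so the admissible range is $\alpha\in(0,1-s]$ and the resulting exponent is $\beta=1-\alpha\in[s,1)$, not $\beta<s$ as you claim. The singularity is still integrable and the Gronwall argument still closes, but only when $s<1$: at the endpoint $s=1$ this route forces $\alpha\leqslant 0$ and $\beta\geqslant 1$, so the estimate degenerates and one must either fix $s\in(\tfrac12,1)$ (which suffices, since $s$ is at your disposal in the functional setting) or argue differently. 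Apart from this reversed inequality and the resulting endpoint caveat, the proof is sound; the final remark about $S_0(\sigma)w_\varepsilon$ staying in a bounded set is unnecessary, since you only use the global Lipschitz property of $F_0$ on that term.
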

\begin{proof}
For the proof see \cite[Proposition 2]{ABDS}.
\end{proof}

Finally, we have the upper semicontinuity of global compact attractors at $\varepsilon=0$.

\begin{theorem}
The family of global attractors $\{\mathcal{A}_\varepsilon: \varepsilon\in [0,\varepsilon_0]\}$ is upper semicontinuous at $\varepsilon=0$; that is,
\[
\operatorname{dist}_{X}(\mathcal{A}_\varepsilon,\mathcal{A}_0) \to 0, \quad \mbox{as} \quad \varepsilon \to 0,
\]
where
\[
\operatorname{dist}_{X}(\mathcal{A}_\varepsilon,\mathcal{A}_0):=\sup_{w_\varepsilon \in\mathcal{A}_\varepsilon}\inf_{w_0 \in\mathcal{A}_0}\{\| w_{\varepsilon}-w_0 \|_{X}\}.
\]
\end{theorem}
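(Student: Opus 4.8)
The plan is to run the classical argument for upper semicontinuity of attractors under a perturbation, which here needs only three facts already available: the uniform bound on the attractors established above (namely $\bigcup_{\varepsilon\in[0,\varepsilon_0]}\mathcal{A}_\varepsilon$ is a bounded subset $B$ of $X$), the invariance $S_\varepsilon(t)\mathcal{A}_\varepsilon=\mathcal{A}_\varepsilon$ of each attractor, and the uniform-in-time convergence of the flows on the attractors from Proposition \ref{proposition4.5}, i.e. $\|S_\varepsilon(t)w_\varepsilon-S_0(t)w_\varepsilon\|_X\leqslant M(\tau)C(\varepsilon)$ for all $t\in[0,\tau]$ and all $w_\varepsilon\in\mathcal{A}_\varepsilon$, with $C(\varepsilon)\to0$ as $\varepsilon\to0$.

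Fix $\delta>0$. Since $\mathcal{A}_0$ is the global attractor of $\{S_0(t):t\geqslant0\}$ it attracts the bounded set $B$, so there is $\tau=\tau(\delta,B)>0$ with
\[
\sup_{b\in B}\operatorname{dist}_X\big(S_0(\tau)b,\mathcal{A}_0\big)<\frac{\delta}{2}.
\]
Let $\varepsilon\in(0,\varepsilon_0]$ and $w_\varepsilon\in\mathcal{A}_\varepsilon$ be arbitrary. By invariance $\mathcal{A}_\varepsilon=S_\varepsilon(\tau)\mathcal{A}_\varepsilon$, hence there is $v_\varepsilon\in\mathcal{A}_\varepsilon\subset B$ with $S_\varepsilon(\tau)v_\varepsilon=w_\varepsilon$. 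Then the triangle inequality together with Proposition \ref{proposition4.5} applied to $v_\varepsilon\in\mathcal{A}_\varepsilon$ gives
\[
\operatorname{dist}_X(w_\varepsilon,\mathcal{A}_0)\leqslant\big\|S_\varepsilon(\tau)v_\varepsilon-S_0(\tau)v_\varepsilon\big\|_X+\operatorname{dist}_X\big(S_0(\tau)v_\varepsilon,\mathcal{A}_0\big)\leqslant M(\tau)C(\varepsilon)+\frac{\delta}{2}.
\]
Since $\tau$ is now fixed and $C(\varepsilon)\to0$ as $\varepsilon\to0$, there is $\varepsilon_1\in(0,\varepsilon_0]$ with $M(\tau)C(\varepsilon)<\delta/2$ for all $\varepsilon\in(0,\varepsilon_1]$; for such $\varepsilon$ we obtain $\operatorname{dist}_X(w_\varepsilon,\mathcal{A}_0)<\delta$, and taking the supremum over $w_\varepsilon\in\mathcal{A}_\varepsilon$ yields $\operatorname{dist}_X(\mathcal{A}_\varepsilon,\mathcal{A}_0)\leqslant\delta$. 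As $\delta>0$ was arbitrary, $\operatorname{dist}_X(\mathcal{A}_\varepsilon,\mathcal{A}_0)\to0$ as $\varepsilon\to0$.

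The argument has no serious technical obstacle; the point that requires care is structural. Proposition \ref{proposition4.5} compares the two semigroups only along trajectories starting on $\mathcal{A}_\varepsilon$, so one must not attempt to estimate $\|S_\varepsilon(t)b-S_0(t)b\|_X$ for arbitrary $b\in B$; instead one uses the invariance of $\mathcal{A}_\varepsilon$ to represent each $w_\varepsilon$ as $S_\varepsilon(\tau)v_\varepsilon$ with $v_\varepsilon\in\mathcal{A}_\varepsilon$, which is precisely where the invariance of $\mathcal{A}_\varepsilon$ and the uniform bound by $B$ are simultaneously used. Everything else is a routine ``three-$\delta$'' estimate, and one could alternatively just cite the corresponding statement in \cite{ABDS}.
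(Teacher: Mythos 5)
Your proof is correct and is the standard argument; the paper itself simply cites \cite[Theorem 6]{ABDS} for this statement, and the argument there is exactly this combination of the uniform bound on $\bigcup_\varepsilon\mathcal{A}_\varepsilon$, the attraction of bounded sets by $\mathcal{A}_0$, the invariance $S_\varepsilon(\tau)\mathcal{A}_\varepsilon=\mathcal{A}_\varepsilon$, and Proposition \ref{proposition4.5}. Your closing remark correctly identifies the one point of care, namely that the semigroup comparison is only available along trajectories emanating from $\mathcal{A}_\varepsilon$, which is precisely why the invariance step is needed.
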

\begin{proof}
For the proof see \cite[Theorem 6]{ABDS}.
\end{proof}


\section{Continuity of the set of equilibria}\label{Sec:Cont_Eq}

In order to obtain the lower semicontinuity of global attractors at $\varepsilon=0$ we will need to obtain the continuity of the set of equilibria and then study the continuity of the linearization around each equilibrium. In this section we prove that the family of equilibria $\{\mathcal{E}_\varepsilon: \varepsilon\in[0,\varepsilon_0]\}$ of \eqref{PPrin_1} and \eqref{PPrin_2}  is continuous at $\varepsilon=0$. 

\begin{definition}
The equilibrium solutions of \eqref{PPrin_1} and \eqref{PPrin_2} are those which are independent of time. In other words, for each $\varepsilon\in (0,\varepsilon_0]$, the equilibrium solutions of \eqref{PPrin_1}  are those which are solutions of the elliptic problems
\begin{equation}
\label{eql1}
\begin{cases}
\Delta^2u^\varepsilon+u^\varepsilon=f(u^\varepsilon)+\dfrac{1}{\varepsilon}\chi_{\omega_{\varepsilon}}g(u^\varepsilon)& \mbox{in } \Omega,\\
\dfrac{\partial u^\varepsilon}{\partial\vec{n}}=\dfrac{\partial (\Delta u^\varepsilon)}{\partial\vec{n}}=0 & \mbox{on } \Gamma,
\end{cases}
\end{equation}
and
\begin{equation}
\label{eql2}
\begin{cases}
\Delta \theta^\varepsilon-\theta^\varepsilon=0& \mbox{in}\  \Omega,\\
\dfrac{\partial \theta^\varepsilon}{\partial\vec{n}}=0 & \mbox{on } \Gamma,
\end{cases}
\end{equation}
that is, $\theta^{\varepsilon}$ is identity null in $\Omega$. The equilibrium solutions of \eqref{PPrin_2}  are those which are solutions of the elliptic problems
\begin{equation}\label{eql1BB}
\begin{cases}
\Delta^2u+u=f(u)& \mbox{in } \Omega,\\
\dfrac{\partial u}{\partial\vec{n}}=\dfrac{\partial (\Delta u)}{\partial\vec{n}}=-g(u) & \mbox{on } \Gamma,
\end{cases}
\end{equation}
and
\begin{equation}
\label{eql2BB}
\begin{cases}
\Delta \theta-\theta=0& \mbox{in}\  \Omega,\\
\dfrac{\partial \theta}{\partial\vec{n}}=0 & \mbox{on } \Gamma,
\end{cases}
\end{equation}
that is, $\theta$ is identity null in $\Omega$. 
\end{definition}

\begin{remark}
Equivalently, for each $\varepsilon\in(0,\varepsilon_0]$, the equilibrium solutions of \eqref{AP1} are those which are solutions of the semilinear problems
\begin{equation}
\label{AP1Eq}
\mathbb{A} w^{\varepsilon}+F_\varepsilon(w^{\varepsilon})=0, \quad \mbox{with $w^{\varepsilon}=\displaystyle \Big(\begin{smallmatrix} u^{\varepsilon}\\ 0\\ 0\end{smallmatrix}\Big)$}
\end{equation}
As well as, the equilibrium solutions of \eqref{AP1b} are those which are solutions of the semilinear problem
\begin{equation}
\label{AP2Eq}
\mathbb{A} w+F_0(w)=0, \quad \mbox{with $w=\displaystyle \Big(\begin{smallmatrix} u\\ 0\\ 0\end{smallmatrix}\Big)$}.
\end{equation}
\end{remark}

Thus, the set of equilibria $\mathcal{E}_\varepsilon$ of \eqref{AP1} and \eqref{AP1b}, or equivalently, the set of solutions of \eqref{AP1Eq} and \eqref{AP2Eq} with $\varepsilon\in[0,\varepsilon_0]$, is given by
\[
\mathcal{E}_\varepsilon=\Big\{w^*_{\varepsilon}=\Big(\begin{smallmatrix} u^*_{\varepsilon}\\ 0\\ 0\end{smallmatrix}\Big)\in X:\ \mbox{$u^*_{\varepsilon}$ is solution of \eqref{eql1}} \Big\},\quad \varepsilon\in(0,\varepsilon_0],
\]
and
\[
\mathcal{E}_0=\Big\{w^*=\Big(\begin{smallmatrix} u^*\\ 0\\ 0\end{smallmatrix}\Big)\in X:\ \mbox{$u^*$ is solution of \eqref{eql1BB}} \Big\}.
\]

We will see that each set $\mathcal{E}_{\varepsilon}$ is not empty and it is compact, but for this, we need of the following result
\begin{theorem}
Let $X,Y, Z$ be normed linear spaces, and suppose $T\in \mathcal{L}(X,Y)$ and $S\in \mathcal{L}(Y,Z)$. Then $ST$ is compact, whenever $S$ or $T$ is compact.
\end{theorem}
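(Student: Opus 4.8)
The plan is to argue directly from the sequential characterization of compactness: an operator $K$ between normed linear spaces is compact precisely when, for every bounded sequence $(x_n)$ in its domain, the image sequence $(Kx_n)$ possesses a convergent subsequence. Accordingly, I would fix a bounded sequence $(x_n)$ in $X$, say $\|x_n\|_X \leqslant R$ for all $n$, and show that $(STx_n)$ has a subsequence converging in $Z$, treating separately the two hypotheses.

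Suppose first that $T$ is compact. Since $(x_n)$ is bounded and $T$ is compact, there exist a subsequence $(x_{n_k})$ and a point $y \in Y$ with $Tx_{n_k} \to y$ in $Y$. As $S \in \mathcal{L}(Y,Z)$ is continuous, applying $S$ yields $STx_{n_k} = S(Tx_{n_k}) \to Sy$ in $Z$, so $(STx_n)$ has a convergent subsequence and $ST$ is compact. Suppose instead that $S$ is compact. Since $T \in \mathcal{L}(X,Y)$ is bounded, the sequence $(Tx_n)$ is bounded in $Y$ (indeed $\|Tx_n\|_Y \leqslant \|T\|\,R$), and applying the compactness of $S$ to this bounded sequence produces a subsequence $(Tx_{n_k})$ along which $(STx_{n_k}) = (S(Tx_{n_k}))$ converges in $Z$; again $ST$ is compact.

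Equivalently, one can phrase this in terms of the closed unit ball $B_X$: if $T$ is compact then $\overline{T(B_X)}$ is compact in $Y$, hence so is its continuous image $S\big(\overline{T(B_X)}\big)$, which contains $ST(B_X)$; if $S$ is compact then $T(B_X) \subseteq r B_Y$ for some $r>0$ by boundedness of $T$, and $S(rB_Y)$ is relatively compact and contains $ST(B_X)$; in either case $\overline{ST(B_X)}$ is compact. There is no genuine obstacle in this argument; the only point worth noting is that the two cases invoke two different properties of the non-compact factor — continuity of $S$ in the first, boundedness of $T$ in the second.
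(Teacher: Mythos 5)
Your argument is correct: the sequential characterization of compactness, applied in the two cases (continuity of $S$ pushing forward the convergent subsequence when $T$ is compact; boundedness of $T$ preserving boundedness of the sequence when $S$ is compact), is exactly the standard proof of this classical fact. The paper itself offers no proof, merely citing Taylor--Lay, Theorem 7.2, so your write-up simply supplies the routine argument that the citation stands in for; nothing is missing.
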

\begin{proof}
See \cite[Theorem 7.2]{Taylor}.
\end{proof}

\begin{lemma}
\label{compacto}
For each $\varepsilon\in [0,\varepsilon_{0}]$ fixed, the set $\mathcal{E}_{\varepsilon}$ is not empty. Moreover, $\mathcal{E}_{\varepsilon}$ is compact in $X$.
\end{lemma}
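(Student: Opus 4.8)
The plan is to treat the two claims—nonemptiness and compactness of $\mathcal{E}_\varepsilon$—separately, reducing everything to known facts about the semigroup $\{S_\varepsilon(t)\}$ and the global attractor $\mathcal{A}_\varepsilon$. For nonemptiness, I would invoke the gradient structure of the problems together with the existence of the global compact attractor $\mathcal{A}_\varepsilon$ established in Section~\ref{Sec:ExAtt_UpperSem}. Indeed, a gradient system with a global attractor must have at least one equilibrium: the Lyapunov functional attains its infimum on the (compact, invariant) attractor, and by LaSalle's invariance principle the $\omega$-limit set of any trajectory is contained in $\mathcal{E}_\varepsilon$; since $\mathcal{A}_\varepsilon\neq\emptyset$ and every complete bounded trajectory has nonempty $\alpha$- and $\omega$-limit sets lying in $\mathcal{E}_\varepsilon$, we get $\mathcal{E}_\varepsilon\neq\emptyset$. (Concretely one can also argue directly: any $u^*_\varepsilon$ solving the elliptic problem \eqref{eql1} or \eqref{eql1BB} exists by a variational/degree argument using the dissipative condition \eqref{EqDissipCond} and the compactness of $(\varLambda+I)^{-1}$, but routing through the attractor is cleaner.)

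For compactness, the key observation is that $\mathcal{E}_\varepsilon\subset\mathcal{A}_\varepsilon$: every equilibrium is a (trivially) complete bounded orbit, hence lies in the attractor, which is bounded in $X$ uniformly in $\varepsilon$. So it suffices to show $\mathcal{E}_\varepsilon$ is closed and precompact in $X$. Precompactness I would get from a smoothing/regularity estimate: if $w^*_\varepsilon=\bigl(\begin{smallmatrix}u^*_\varepsilon\\0\\0\end{smallmatrix}\bigr)\in\mathcal{E}_\varepsilon$, then $\mathbb{A}w^*_\varepsilon=-F_\varepsilon(w^*_\varepsilon)$, and by Lemma~\ref{Lipschitz}(i) the right-hand side is bounded in $\mathcal{H}$ uniformly in $\varepsilon$; applying $\mathbb{A}^{-1}$ (which maps into $D(\mathbb{A})$, or more precisely into a space compactly embedded in $X$ via the compactness of $(\varLambda+I)^{-1}:Y\to Y$ together with the Theorem of Taylor cited above on composition with compact operators) shows $\mathcal{E}_\varepsilon$ is bounded in a space compactly embedded in $X$, hence precompact. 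Closedness: take $w_n\in\mathcal{E}_\varepsilon$ with $w_n\to w$ in $X$; since $F_\varepsilon$ is continuous from $X$ to $\mathcal{H}$ (Lemma~\ref{Lipschitz}) and $\mathbb{A}^{-1}\in\mathcal{L}(\mathcal{H}, X)$ is continuous, passing to the limit in $w_n=-\mathbb{A}^{-1}F_\varepsilon(w_n)$ gives $w=-\mathbb{A}^{-1}F_\varepsilon(w)$, i.e.\ $w\in\mathcal{E}_\varepsilon$.

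The step I expect to be the main obstacle is making the compactness argument fully rigorous across the abstract scale: one must be careful that $F_\varepsilon$ takes values in $\mathcal{H}=H^2(\Omega)\times H^{-s}(\Omega)\times L^2(\Omega)$ rather than in $X$, that $\mathbb{A}^{-1}$ improves regularity by the right amount, and that the resulting target space (built from $(\varLambda+I)^{-1}$ acting on $H^{-s}(\Omega)$, landing in something like $H^{4-s}(\Omega)\hookrightarrow\hookrightarrow H^2(\Omega)$) is genuinely compactly embedded in $X$. The cited composition theorem for compact operators is exactly what is needed to conclude that $w^*_\varepsilon\mapsto-\mathbb{A}^{-1}F_\varepsilon(w^*_\varepsilon)$ is a compact map, so that its range—which contains $\mathcal{E}_\varepsilon$ as a fixed-point set—has compact closure. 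Once this regularity bookkeeping is in place, closedness and the inclusion $\mathcal{E}_\varepsilon\subset\mathcal{A}_\varepsilon$ are routine, and the lemma follows.
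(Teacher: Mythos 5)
Your proposal is correct, but the nonemptiness half takes a genuinely different route from the paper. For compactness the two arguments coincide in substance: both reduce to the fact that $\mathbb{A}^{-1}:\mathcal{H}\to X$ is compact (via $(\varLambda+I)^{-1}:H^{-s}(\Omega)\to H^{4-s}(\Omega)$ and the compact embedding $H^{4-s}(\Omega)\hookrightarrow H^{2}(\Omega)$, exactly the bookkeeping you flag), so that $\mathbb{A}^{-1}F_{\varepsilon}:X\to X$ is a compact map whose fixed-point set is $\mathcal{E}_{\varepsilon}$; you split this into ``precompact plus closed'' while the paper runs the equivalent sequential argument, extracting a convergent subsequence from $w^{*}_{\varepsilon,n}=-\mathbb{A}^{-1}F_{\varepsilon}(w^{*}_{\varepsilon,n})$ and passing to the limit. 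For nonemptiness, however, the paper does \emph{not} go through the attractor or the gradient structure: it observes that the uniform bound $\|F_{\varepsilon}(w)\|_{\mathcal{H}}\leqslant C$ of Lemma \ref{Lipschitz}(i) forces the compact map $\mathbb{A}^{-1}F_{\varepsilon}$ to send all of $X$ into the closed ball $\bar B_{r}(0)$ with $r=C\|\mathbb{A}^{-1}\|_{\mathcal{L}(\mathcal{H},X)}$, and then applies the Schauder fixed point theorem on that ball. Your route via LaSalle and the nonempty $\omega$-limit sets on $\mathcal{A}_{\varepsilon}$ is also valid given the results quoted in Section \ref{Sec:ExAtt_UpperSem}, but it imports more machinery (the gradient structure and the characterization $\mathcal{A}_{\varepsilon}=W^{u}(\mathcal{E}_{\varepsilon})$, which is stated in terms of $\mathcal{E}_{\varepsilon}$ to begin with), whereas the Schauder argument is self-contained, needs only the global boundedness of $F_{\varepsilon}$ and the compactness of $\mathbb{A}^{-1}$, and is uniform in $\varepsilon$ for free. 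Your parenthetical fallback (a variational or degree argument on the elliptic problem) is closer in spirit to what the paper actually does, just phrased at the level of the abstract fixed-point equation rather than the PDE.
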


\begin{proof}
The bounded linear operator $(\varLambda+I)^{-1}: H^{-s}(\Omega)\to H^{2}(\Omega)$ is compact, because the linear operator $(\varLambda+I)^{-1}: H^{-s}(\Omega)\to H^{4-s}(\Omega)$ is bounded and we have the compact embedding $H^{4-s}(\Omega) \hookrightarrow H^{2}(\Omega)$ for $4-s>2$. Moreover, we have the compact embedding $H^{4}(\Omega) \hookrightarrow H^{2}(\Omega)$ and therefore the bounded linear operator $(\varLambda+I)^{-1}: L^{2}(\Omega)\to H^{2}(\Omega)$ is compact. 

We also have the compact embedding $H^{2}(\Omega) \hookrightarrow L^{2}(\Omega)$ and therefore the bounded linear operator $(\varLambda^{\frac{1}{2}}+I)^{-1}: L^{2}(\Omega)\to L^{2}(\Omega)$ is compact.  Finally, the linear operator $(\varLambda+I)^{-1}(\varLambda^{\frac{1}{2}}+I): H^{2}(\Omega)\to H^{2}(\Omega)$ is compact, because the linear operator $(\varLambda+I)^{-1}(\varLambda^{\frac{1}{2}}+I): H^{2}(\Omega)\to H^{4}(\Omega)$ is bounded and we have the compact embedding $H^{4}(\Omega) \hookrightarrow H^{2}(\Omega)$.
 
Therefore the linear operator $\mathbb{A}^{-1}: \mathcal{H}\to X$ is compact and consequently $\mathbb{A}^{-1}F_{\varepsilon}: X\to X$ is compact.

Now, show that for each $\varepsilon\in [0,\varepsilon_{0}]$ fixed, the set $\mathcal{E}_{\varepsilon}$  is not empty, it is equivalent to show that the compact operator $\mathbb{A}^{-1}F_{\varepsilon}: X\to X$ has at least one fixed point. 

From Lemma \ref{Lipschitz}, we have that there exists $C>0$ independent of $\varepsilon$ such that
\[
\|F_{\varepsilon}(w)\|_{\mathcal{H}}\leqslant C,\quad \mbox{$\forall w\in X$\quad and\quad $\varepsilon \in [0, \varepsilon_{0}]$}.
\]

We consider the closed ball $\bar{B}_{r}(0)$ in $X$, where $r=C\|\mathbb{A}^{-1}\|_{\mathcal{L}(\mathcal{H},X)}$. For each $w\in X$, we have
\begin{eqnarray}
\label{limitacao}
\|\mathbb{A}^{-1}F_{\varepsilon}(w)\|_{X}\leqslant\|\mathbb{A}^{-1}\|_{\mathcal{L}(\mathcal{H},X)} \|F_{\varepsilon}(w)\|_{\mathcal{H}}\leqslant r.
\end{eqnarray}

Therefore, the compact operator $\mathbb{A}^{-1}F_{\varepsilon}: X\to X$ takes $X$ in the ball $\bar{B}_{r}(0)$, in particular, $\mathbb{A}^{-1}F_{\varepsilon}$ takes $\bar{B}_{r}(0)$ into itself. From Schauder Fixed Point Theorem, we obtain that $\mathbb{A}^{-1}F_{\varepsilon}$ has at least one fixed point in $X$. 

Now, for each $\varepsilon\in [0,\varepsilon_{0}]$ fixed, we will prove that $\mathcal{E}_{\varepsilon}$ is compact in $X$. For each $\varepsilon\in [0,\varepsilon_{0}]$ fixed, let $\{w^{*}_{\varepsilon,n}\}_{n\in \mathbb{N}}$ be a sequence in $\mathcal{E}_{\varepsilon}$, then $w^{*}_{\varepsilon,n}=-\mathbb{A}^{-1}F_{\varepsilon}(w^{*}_{\varepsilon,n})$, for all $n\in \mathbb{N}$. Similarly to (\ref{limitacao}), we get that $\{w^{*}_{\varepsilon,n}\}_{n\in \mathbb{N}}$ is a bounded sequence in $X$. Thus, for each $\varepsilon\in [0,\varepsilon_{0}]$ fixed, $\{-\mathbb{A}^{-1}F_{\varepsilon}(w^{*}_{\varepsilon,n})\}_{n\in \mathbb{N}}$ has a convergent subsequence, that we will denote by $\{-\mathbb{A}^{-1}F_{\varepsilon}(w^{*}_{\varepsilon,n_{k}})\}_{k\in \mathbb{N}}$, with limit $w^{*}_\varepsilon\in X$, that is,
\[
-\mathbb{A}^{-1}F_{\varepsilon}(w^{*}_{\varepsilon,n_{k}})\to w^{*}_\varepsilon \quad \mbox{in}\quad X,\quad \mbox{as $k\to \infty$}.
\]
Hence, $w^{*}_{\varepsilon,n_{k}}\to w^{*}_\varepsilon$ in $X$, as $k\to \infty$. 

By continuity of operator $\mathbb{A}^{-1}F_{\varepsilon}:X\to X$, we get 
\[
-\mathbb{A}^{-1}F_{\varepsilon}(w^{*}_{\varepsilon,n_{k}})\to -\mathbb{A}^{-1}F_{\varepsilon}(w^{*}_\varepsilon)\quad \mbox{in}\quad X,\quad \mbox{as $k\to \infty$}.
\]

By the uniqueness of the limit, $w^{*}_{\varepsilon}=-\mathbb{A}^{-1}F_{\varepsilon}(w^{*}_{\varepsilon})$. Thus, $\mathbb{A} w^{*}_{\varepsilon}+F_{\varepsilon}(w^{*}_{\varepsilon})=0$ and $w^{*}_{\varepsilon}\in \mathcal{E}_{\varepsilon}$. Therefore, $\mathcal{E}_{\varepsilon}$ is a compact set in $X$. 
\end{proof}

The upper semicontinuity of the family  $\{\mathcal{E}_\varepsilon: \varepsilon\in[0,\varepsilon_0]\}$ at $\varepsilon=0$ is a consequence of the upper semicontinuity of attractors at $\varepsilon=0$.

\begin{theorem}\label{theoremUS}
The family  $\{\mathcal{E}_\varepsilon: \varepsilon\in[0,\varepsilon_0]\}$ is upper semicontinuous  at $\varepsilon=0$.
\end{theorem}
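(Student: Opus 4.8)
The plan is to argue by contradiction and exploit the fixed point characterization of equilibria together with the compactness and continuity properties of $\mathbb{A}^{-1}F_{\varepsilon}$ already established in Lemma \ref{compacto} and Lemma \ref{Lipschitz}. Suppose the family $\{\mathcal{E}_\varepsilon:\varepsilon\in[0,\varepsilon_0]\}$ fails to be upper semicontinuous at $\varepsilon=0$. Then there exist $\delta>0$, a sequence $\varepsilon_n\to 0$, and points $w^{*}_{\varepsilon_n}\in\mathcal{E}_{\varepsilon_n}$ such that
\[
\inf_{w\in\mathcal{E}_0}\|w^{*}_{\varepsilon_n}-w\|_{X}\geqslant \delta,\quad n\in\mathbb{N}.
\]
Since each $w^{*}_{\varepsilon_n}\in\mathcal{E}_{\varepsilon_n}$ solves the fixed point equation $w^{*}_{\varepsilon_n}=-\mathbb{A}^{-1}F_{\varepsilon_n}(w^{*}_{\varepsilon_n})$, the uniform bound $\|F_{\varepsilon}(w)\|_{\mathcal{H}}\leqslant C$ of Lemma \ref{Lipschitz}(i) yields, exactly as in \eqref{limitacao}, that $\{w^{*}_{\varepsilon_n}\}_{n\in\mathbb{N}}$ is bounded in $X$ (this is also consistent with $\mathcal{E}_{\varepsilon_n}\subset\mathcal{A}_{\varepsilon_n}$ and the uniform boundedness of $\bigcup_{\varepsilon\in[0,\varepsilon_0]}\mathcal{A}_\varepsilon$).

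Next I would use compactness to extract a convergent subsequence. Because $\mathbb{A}^{-1}\colon\mathcal{H}\to X$ is compact (Lemma \ref{compacto}) and $\{F_{\varepsilon_n}(w^{*}_{\varepsilon_n})\}_{n\in\mathbb{N}}$ is bounded in $\mathcal{H}$, the sequence $\{-\mathbb{A}^{-1}F_{\varepsilon_n}(w^{*}_{\varepsilon_n})\}_{n\in\mathbb{N}}=\{w^{*}_{\varepsilon_n}\}_{n\in\mathbb{N}}$ has a subsequence, still denoted $\{w^{*}_{\varepsilon_n}\}$, converging to some $w^{*}\in X$.

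It then remains to identify the limit. Since $w^{*}_{\varepsilon_n}\to w^{*}$ in $X$ and $\varepsilon_n\to 0$, Lemma \ref{Lipschitz}(iv) gives $\|F_{\varepsilon_n}(w^{*}_{\varepsilon_n})-F_{0}(w^{*})\|_{\mathcal{H}}\to 0$, and hence, by boundedness of $\mathbb{A}^{-1}\colon\mathcal{H}\to X$,
\[
w^{*}_{\varepsilon_n}=-\mathbb{A}^{-1}F_{\varepsilon_n}(w^{*}_{\varepsilon_n})\longrightarrow -\mathbb{A}^{-1}F_{0}(w^{*})\quad\mbox{in }X.
\]
By uniqueness of the limit, $w^{*}=-\mathbb{A}^{-1}F_{0}(w^{*})$, that is, $\mathbb{A}w^{*}+F_{0}(w^{*})=0$, so $w^{*}\in\mathcal{E}_0$. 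Passing to the limit in the inequality above, $\inf_{w\in\mathcal{E}_0}\|w^{*}-w\|_{X}\geqslant\delta>0$, which contradicts $w^{*}\in\mathcal{E}_0$. Therefore the family $\{\mathcal{E}_\varepsilon:\varepsilon\in[0,\varepsilon_0]\}$ is upper semicontinuous at $\varepsilon=0$. The only point requiring care is the compactness extraction together with the passage to the limit in the nonlinear term, and both are handled directly by Lemma \ref{compacto} and Lemma \ref{Lipschitz}(iv).
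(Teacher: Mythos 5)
Your proof is correct, but it follows a genuinely different route from the paper's. You work entirely with the elliptic (static) characterization of equilibria: writing $w^{*}_{\varepsilon_n}=-\mathbb{A}^{-1}F_{\varepsilon_n}(w^{*}_{\varepsilon_n})$, you get boundedness from the uniform estimate on $F_\varepsilon$, precompactness from the compactness of $\mathbb{A}^{-1}\colon\mathcal{H}\to X$, and identification of the limit from Lemma \ref{Lipschitz}(iv) --- essentially the same machinery the paper deploys in Lemma \ref{compacto} to show each $\mathcal{E}_\varepsilon$ is nonempty and compact. The paper instead argues dynamically: it uses $\mathcal{E}_\varepsilon\subset\mathcal{A}_\varepsilon$ and the already-established upper semicontinuity of the attractors to extract a subsequence converging to some $w^{*}\in\mathcal{A}_0$, and then invokes the convergence of the nonlinear semigroups (Proposition \ref{proposition4.5}) together with $w^{*}_{\varepsilon_k}=S_{\varepsilon_k}(t)w^{*}_{\varepsilon_k}$ to conclude $S_0(t)w^{*}=w^{*}$ for all $t$, hence $w^{*}\in\mathcal{E}_0$. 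Your approach is more elementary and self-contained for this particular statement, since it does not rely on the attractor upper semicontinuity or on the semigroup convergence estimate; the paper's approach is shorter given what has already been proved and generalizes more readily to settings where an explicit fixed-point formulation of the equilibria is unavailable. The only point worth flagging is cosmetic: Lemma \ref{Lipschitz}(iv) is stated for families indexed by $\varepsilon\to0$ and you apply it along a subsequence $\varepsilon_n\to0$, which is standard and harmless, but you could note it explicitly.
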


\begin{proof}
Initially, we observe that $\mathcal{E}_\varepsilon\subset \mathcal{A}_\varepsilon$ for any $\varepsilon\in[0,\varepsilon_0]$, and therefore, $\mathcal{E}_\varepsilon$ is bounded in $X$. We will prove that for any sequence of $\varepsilon \to 0$ and for any $w^{*}_{\varepsilon} \in \mathcal{E}_{\varepsilon}$ we can extract a subsequence which converges to an element of $\mathcal{E}_{0}$. From the upper semicontinuity of the attractors and using that $w^{*}_{\varepsilon} \in \mathcal{E}_{\varepsilon} \subset \mathcal{A}_\varepsilon$, we can extract a subsequence $w^{*}_{\varepsilon_k} \in \mathcal{E}_{\varepsilon_k}$ with $\varepsilon_k \to 0$, as $k\to \infty$, and we obtain the existence of a $w^{*} \in \mathcal{A}_0$ such that 
\[
\|w^{*}_{\varepsilon_k}-w^{*}\|_{X} \to 0, \quad \mbox{as $k\to \infty$}.
\]
We need to prove that $w ^{*}\in \mathcal{E}_{0}$; that is, $S_{0}(t)w^{*}=w^{*}$, for any $t\geqslant 0$.

We first observe that for any $t>0$,
\[
\|w^{*}_{\varepsilon_k}-S_{0}(t)w^{*}\|_{X}\leqslant \|w^{*}_{\varepsilon_k}-w^{*}\|_{X}+\|w^{*}-S_{0}(t)w^{*}\|_{X}\to \|w^{*}-S_{0}(t)w^{*}\|_{X}, \quad \mbox{as $k\to \infty$}.
\]
Moreover, for a fixed $\tau >0$ and for any $t\in (0,\tau)$, we obtain
\[
\begin{split}
\|w^{*}_{\varepsilon_k}-S_{0}(t)w^{*}\|_{X}&=\|S_{\varepsilon_k}(t)w^{*}_{\varepsilon_k}-S_{0}(t)w^{*}\|_{X}\\
&\leqslant \|S_{\varepsilon_k}(t)w^{*}_{\varepsilon_k}-S_{0}(t)w^{*}_{\varepsilon_k}\|_{X}+\|S_{0}(t)w^{*}_{\varepsilon_k}-S_{0}(t)w^{*} \|_{X} \to 0,\quad \mbox{as $k\to \infty$,}
\end{split}
\]
where we have used the continuity of semigroups given by Proposition \ref{proposition4.5}. In particular, we have that for each $t\geqslant 0$, $S_{0}(t)w^{*}=w^{*}$, which implies that $w^{*}\in \mathcal{E}_0$.
\end{proof}

The proof of lower semicontinuity requires additional assumptions. We need to assume that the equilibrium solutions of \eqref{AP2Eq} are stable under perturbation, this stability under perturbation will be given by the hyperbolicity.

\begin{definition}\label{def_hyperbolic}
We say that the solution $w^{*}$ of \eqref{AP2Eq} is hyperbolic if the spectrum $\sigma(\mathbb{A}+DF_0(w^{*}))$ of $\mathbb{A}+DF_0(w^{*})$ is disjoint from the imaginary axis. 
\end{definition}

\begin{theorem} \label{theoremisolated}
If all solutions of \eqref{AP2Eq} are isolated, then there are only a finite number of them. Any hyperbolic solution of \eqref{AP2Eq} is isolated.
\end{theorem}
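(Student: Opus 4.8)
The plan is to prove the two assertions separately, since they are logically independent: the finiteness statement is a soft consequence of compactness, while the isolatedness of hyperbolic equilibria rests on the inverse function theorem applied to a suitable fixed‑point reformulation.

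For the finiteness statement, I would invoke Lemma \ref{compacto}, which gives that $\mathcal{E}_0$ is a compact (hence closed) subset of $X$. If every $w^*\in\mathcal{E}_0$ is isolated, then for each such $w^*$ there is an open set $U_{w^*}\subset X$ with $U_{w^*}\cap\mathcal{E}_0=\{w^*\}$. The family $\{U_{w^*}\}_{w^*\in\mathcal{E}_0}$ is an open cover of the compact set $\mathcal{E}_0$; extracting a finite subcover and using that each $U_{w^*}$ meets $\mathcal{E}_0$ in exactly one point shows $\mathcal{E}_0$ is finite. (Equivalently: an infinite subset of a compact metric space has an accumulation point, which would lie in the closed set $\mathcal{E}_0$ and fail to be isolated.)

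For the second statement I would reformulate \eqref{AP2Eq} as a zero problem. Since $0\in\rho(\mathbb{A})$ and, as shown in the proof of Lemma \ref{compacto}, $\mathbb{A}^{-1}\colon\mathcal{H}\to X$ is compact, a point $w=\big(\begin{smallmatrix}u\\0\\0\end{smallmatrix}\big)$ solves \eqref{AP2Eq} if and only if it is a zero of
\[
\Psi\colon X\to X,\qquad \Psi(w)=w+\mathbb{A}^{-1}F_0(w),
\]
so that $\mathcal{E}_0=\Psi^{-1}(0)$. By Lemma \ref{Lipschitz_01}, $F_0\colon X\to\mathcal{H}$ is continuously Fr\'echet differentiable, hence so is $\Psi$, with $D\Psi(w^*)=I+\mathbb{A}^{-1}DF_0(w^*)\in\mathcal{L}(X)$; moreover $\mathbb{A}^{-1}DF_0(w^*)$ is compact on $X$, being the composition of $DF_0(w^*)\in\mathcal{L}(X,\mathcal{H})$ with the compact operator $\mathbb{A}^{-1}\in\mathcal{L}(\mathcal{H},X)$. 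By the Fredholm alternative, $D\Psi(w^*)$ is an isomorphism of $X$ as soon as it is injective. Now if $w^*$ is hyperbolic, then by Definition \ref{def_hyperbolic} the spectrum $\sigma(\mathbb{A}+DF_0(w^*))$ is disjoint from the imaginary axis; in particular $0\notin\sigma(\mathbb{A}+DF_0(w^*))$. Since $D\Psi(w^*)h=0$ means $h=-\mathbb{A}^{-1}DF_0(w^*)h$, applying $\mathbb{A}$ gives $(\mathbb{A}+DF_0(w^*))h=0$, and injectivity of $\mathbb{A}+DF_0(w^*)$ forces $h=0$. Thus $D\Psi(w^*)$ is an isomorphism of $X$, and the inverse function theorem in Banach spaces yields a neighbourhood $U$ of $w^*$ with $\Psi^{-1}(0)\cap U=\{w^*\}$; i.e. $w^*$ is isolated in $\mathcal{E}_0$.

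The part requiring the most care is the functional‑analytic bookkeeping in the last step: namely that $0\notin\sigma(\mathbb{A}+DF_0(w^*))$ (with the linearized operator understood on its natural domain, as in Definition \ref{def_hyperbolic}) is genuinely equivalent to the invertibility of $I+\mathbb{A}^{-1}DF_0(w^*)$ on $X$, including the justification that a solution of $h+\mathbb{A}^{-1}DF_0(w^*)h=0$ lies in the domain of $\mathbb{A}+DF_0(w^*)$. This follows from the factorization $\mathbb{A}+DF_0(w^*)=\mathbb{A}\,(I+\mathbb{A}^{-1}DF_0(w^*))$ together with the mapping properties of $\mathbb{A}^{-1}$ between $\mathcal{H}$ and $X$ recorded in Section \ref{well-posedness}; the remaining ingredients (compactness, the Fredholm alternative, and the inverse function theorem) are standard.
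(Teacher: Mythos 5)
Your proof is correct and in substance follows the same route as the paper: both arguments reduce finiteness to compactness of $\mathcal{E}_0$ (Lemma \ref{compacto}) and obtain isolatedness of a hyperbolic $w^{*}$ by inverting the linearization at $w^{*}$ and controlling the nonlinear remainder. The packaging differs slightly. The paper rewrites \eqref{AP2Eq} as a fixed-point equation for $T(\xi)=-(\mathbb{A}+DF_0(w^{*}))^{-1}(F_0(\xi)-DF_0(w^{*})\xi)$ and shows $T$ is a contraction of a small closed ball around $w^{*}$ into itself, the needed bound on $(\mathbb{A}+DF_0(w^{*}))^{-1}$ being the $\varepsilon=0$ case of Lemma \ref{important}; you instead work with $\Psi(w)=w+\mathbb{A}^{-1}F_0(w)$, prove $D\Psi(w^{*})=I+\mathbb{A}^{-1}DF_0(w^{*})$ is an isomorphism via compactness and the Fredholm alternative, and invoke the inverse function theorem. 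These are two standard dressings of the same estimate: the contraction inequality for $T$ is exactly the quantitative form of the local injectivity of $\Psi$ that the inverse function theorem delivers, and your Fredholm step makes explicit the invertibility of the linearization that the paper's proof uses without comment (and which it needs, via the equivalence between $0\notin\operatorname{Re}\sigma(\mathbb{A}+DF_0(w^{*}))$ and $1\in\rho(-\mathbb{A}^{-1}DF_0(w^{*}))$ established in Lemma \ref{important}). One small economy: for isolatedness you only need local injectivity of $\Psi$, so the surjectivity halves of the Fredholm alternative and of the inverse function theorem are not actually required.
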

\begin{proof}
Since $\mathcal{E}_0$ is compact we only need to prove that hyperbolic solution is isolated. We note that $w^{*}\in \mathcal{E}_0$ is a solution of \eqref{AP2Eq} if and only if $w^{*}$ is a fixed point of
\[
T(\xi):=-(\mathbb{A}+DF_0(w^{*}))^{-1}(F_0(\xi)-DF_0(w^{*})\xi).
\]
It is not difficult to see that there is $\delta>0$ such that $T$ is a contraction map from closed ball centered at $w^{*}$ and of radius $\delta$ in $X$, $\overline{B}_{\delta}(w^{*})$, into itself. Thus we obtain that $w^{*}$ is the only element in  $\mathcal{E}_0$ in the ball  $\overline{B}_{\delta}(w^{*})$.
\end{proof}

\begin{lemma}\label{lemaass2}
Let $w^{*}\in X$. Then, for each $\varepsilon\in[0,\varepsilon_0]$ fixed, the operator $\mathbb{A}^{-1}DF_{\varepsilon}(w^{*}): X\to X$ is compact. For any bounded family $\left\{w_{\varepsilon}\right\}_{\varepsilon\in(0,\varepsilon_{0}]}$ in $X$, the family $\{\mathbb{A}^{-1}DF_{\varepsilon}(w^{*})w_{\varepsilon}\}_{\varepsilon\in(0,\varepsilon_{0}]}$ is relatively compact in $X$. Moreover, if  $w_{\varepsilon}\to w$ in $X$, as $\varepsilon\to0$, then 
\[
\mathbb{A}^{-1}DF_{\varepsilon}(w^{*})w_{\varepsilon}\to \mathbb{A}^{-1}DF_{0}(w^{*})w \quad \mbox{in}\quad X,  \quad \mbox{as $\varepsilon\to0$}.
\]
\end{lemma}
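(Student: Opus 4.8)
The plan is to establish three assertions in turn, all of which follow by combining the compactness of $\mathbb{A}^{-1}:\mathcal{H}\to X$ (proved in Lemma \ref{compacto}) with the boundedness and convergence properties of $DF_\varepsilon$ obtained in Lemma \ref{resultsconvnonlinearity}. First I would recall that $\mathbb{A}^{-1}:\mathcal{H}\to X$ is compact and that, by Lemma \ref{resultsconvnonlinearity}(i), the linear operator $DF_\varepsilon(w^{*}):X\to\mathcal{H}$ is bounded with norm $\leqslant k$ independently of $\varepsilon$. Hence the composition $\mathbb{A}^{-1}DF_\varepsilon(w^{*}):X\to X$ is the product of a bounded and a compact operator, so by the theorem of Taylor quoted before Lemma \ref{compacto} it is compact. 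This gives the first sentence.

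For the second assertion, let $\{w_\varepsilon\}_{\varepsilon\in(0,\varepsilon_0]}$ be bounded in $X$, say $\|w_\varepsilon\|_X\leqslant R$. Then $\|DF_\varepsilon(w^{*})w_\varepsilon\|_{\mathcal{H}}\leqslant kR$ by Lemma \ref{resultsconvnonlinearity}(i), so $\{DF_\varepsilon(w^{*})w_\varepsilon\}$ lies in a bounded subset of $\mathcal{H}$; applying the compact operator $\mathbb{A}^{-1}:\mathcal{H}\to X$ shows that $\{\mathbb{A}^{-1}DF_\varepsilon(w^{*})w_\varepsilon\}_{\varepsilon\in(0,\varepsilon_0]}$ is relatively compact in $X$. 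This is immediate.

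The substantive point is the convergence statement. Assume $w_\varepsilon\to w$ in $X$ as $\varepsilon\to0$. I would write
\[
\mathbb{A}^{-1}DF_{\varepsilon}(w^{*})w_{\varepsilon}-\mathbb{A}^{-1}DF_{0}(w^{*})w=\mathbb{A}^{-1}\big(DF_{\varepsilon}(w^{*})w_{\varepsilon}-DF_{0}(w^{*})w\big),
\]
and estimate in $X$ by $\|\mathbb{A}^{-1}\|_{\mathcal{L}(\mathcal{H},X)}\,\|DF_{\varepsilon}(w^{*})w_{\varepsilon}-DF_{0}(w^{*})w\|_{\mathcal{H}}$. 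The $\mathcal{H}$-norm is split as
\[
\|DF_{\varepsilon}(w^{*})w_{\varepsilon}-DF_{0}(w^{*})w\|_{\mathcal{H}}\leqslant\|DF_{\varepsilon}(w^{*})(w_{\varepsilon}-w)\|_{\mathcal{H}}+\|(DF_{\varepsilon}(w^{*})-DF_{0}(w^{*}))w\|_{\mathcal{H}}.
\]
The first term is bounded by $k\|w_\varepsilon-w\|_X\to0$ by Lemma \ref{resultsconvnonlinearity}(i), and the second term tends to $0$ by Lemma \ref{resultsconvnonlinearity}(ii) (applied at the fixed point $w^{*}$). Multiplying by the constant $\|\mathbb{A}^{-1}\|_{\mathcal{L}(\mathcal{H},X)}$ finishes the proof. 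There is no real obstacle here; the only thing to be careful about is that the convergence $DF_\varepsilon(w^{*})\to DF_0(w^{*})$ is used at the single fixed base point $w^{*}$ — so only Lemma \ref{resultsconvnonlinearity}(ii) is needed, not the stronger item (iii) — and that all estimates are uniform in $\varepsilon$ because the Lipschitz and boundedness constants from Lemma \ref{resultsconvnonlinearity} and Lemma \ref{Lipschitz_01} are $\varepsilon$-independent.
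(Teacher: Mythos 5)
Your proposal is correct and follows essentially the same route as the paper: compactness of the composition via the boundedness of $DF_\varepsilon(w^{*})$ (Lemma \ref{resultsconvnonlinearity}(i)) together with the compactness of $\mathbb{A}^{-1}:\mathcal{H}\to X$, and then continuity of $\mathbb{A}^{-1}$ for the convergence. The only cosmetic difference is in the last step, where the paper cites Lemma \ref{resultsconvnonlinearity}(iv) directly while you unpack it into a triangle-inequality argument using items (i) and (ii) at the fixed base point $w^{*}$; both are valid and amount to the same computation.
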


\begin{proof}
For each $\varepsilon\in[0,\varepsilon_{0}]$ fixed, the compactness of linear operator $\mathbb{A}^{-1}DF_{\varepsilon}(w^{*}): X\to X$ follows from item {\it (i)} of Lemma \ref{resultsconvnonlinearity} and of compactness of linear operator 
\[
\mathbb{A}^{-1}: \mathcal{H}\to X.
\]

Let $\{w_{\varepsilon}\}_{\varepsilon\in(0,\varepsilon_{0}]}$ be a bounded family in $X$. Since
$$
\|DF_{\varepsilon}(w^{*})w_{\varepsilon}\|_{ \mathcal{H}}\leqslant \|DF_{\varepsilon}(w^{*})\|_{\mathcal{L}(X, \mathcal{H})}\left\|w_{\varepsilon}\right\|_{X}, \quad \mbox{$\forall \varepsilon\in (0,\varepsilon_{0}]$},
$$
and from item {\it (i)} of Lemma \ref{resultsconvnonlinearity}, $\{DF_{\varepsilon}(w^{*})\}_{\varepsilon\in (0,\varepsilon_{0}]}$ is a bounded family in $\mathcal{L}(X, \mathcal{H})$, uniformly in $\varepsilon$, then $\{DF_{\varepsilon}(w^{*})w_{\varepsilon}\}_{\varepsilon\in (0,\varepsilon_{0}]}$ is a bounded family in $ \mathcal{H}$. By compactness of  the linear operator $\mathbb{A}^{-1}: \mathcal{H}\to X$, we have that $\{\mathbb{A}^{-1}DF_{\varepsilon}(w^{*})w_{\varepsilon}\}_{\varepsilon\in (0,\varepsilon_{0}]}$ has a convergent subsequence in $X$. Therefore, the family $\{\mathbb{A}^{-1}DF_{\varepsilon}(w^{*})w_{\varepsilon}\}_{\varepsilon\in (0,\varepsilon_{0}]}$ is relatively compact.

Now, let us take $w_{\varepsilon}\to w$ in $X$, as $\varepsilon\to0$. Thus, from item {\it (iv)} of Lemma \ref{resultsconvnonlinearity},
$$
DF_{\varepsilon}(w^{*})w_{\varepsilon}\to DF_{0}(w^{*})w \quad \mbox{in}\quad  \mathcal{H}, \quad \mbox{as $\varepsilon\to0$}.
$$
  
By continuity of the linear operator $\mathbb{A}^{-1}: \mathcal{H}\to X$, we conclude that 
\[
\mathbb{A}^{-1}DF_{\varepsilon}(w^{*})w_{\varepsilon}\to \mathbb{A}^{-1}DF_{0}(w^{*})w \quad \mbox{in}\quad X, \quad \mbox{as $\varepsilon\to0$}.
\]
\end{proof}

\begin{lemma}\label{important}
Let $w^{*} \in X$ such that $0\not\in\mbox{Re}\sigma(\mathbb{A}+DF_0(w^{*}))$. Then, there exist $\varepsilon_0>0$ and $C>0$ independent of $\varepsilon$ such that $0\not\in \mbox{Re}\sigma(\mathbb{A}+DF_{\varepsilon}(w^{*}))$ and 
\begin{equation}\label{inverse}
\|(\mathbb{A}+DF_{\varepsilon}(w^{*}))^{-1}\|_{\mathcal{L}(\mathcal{H},X)} \leqslant C, \quad \forall \varepsilon \in [0,\varepsilon_0].
\end{equation}
Furthermore, for each $\varepsilon \in [0,\varepsilon_0]$ fixed, the operator $(\mathbb{A}+DF_{\varepsilon}(w^{*}))^{-1}:\mathcal{H} \to X$ is compact. For any bounded family $\{w_{\varepsilon}\}_{\varepsilon \in (0,\varepsilon_0]}$ in $\mathcal{H}$, the family \linebreak$\{(\mathbb{A}+DF_{\varepsilon}(w^{*}))^{-1}w_{\varepsilon}\}_{\varepsilon \in (0,\varepsilon_0]}$ is relatively compact in $X$. Moreover, if $w_{\varepsilon} \to w$ in $\mathcal{H}$, as $\varepsilon \to 0$, then
\[
(\mathbb{A}+DF_{\varepsilon}(w^{*}))^{-1}w_{\varepsilon} \to (\mathbb{A}+DF_{0}(w^{*}))^{-1}w \quad \mbox{in} \quad X, \quad \mbox{as} \quad \varepsilon \to 0.
\]
\end{lemma}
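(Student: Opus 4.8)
The plan is to factor $\mathbb{A}+DF_{\varepsilon}(w^{*})$ through the invertible operator $\mathbb{A}$ and run a Neumann series. Set $G_{\varepsilon}:=I+\mathbb{A}^{-1}DF_{\varepsilon}(w^{*})$; since $DF_{\varepsilon}(w^{*})\in\mathcal{L}(X,\mathcal{H})$ (Lemma \ref{Lipschitz_01}(i)) and $\mathbb{A}^{-1}\in\mathcal{L}(\mathcal{H},X)$ is compact (proof of Lemma \ref{compacto}), we have $G_{\varepsilon}\in\mathcal{L}(X)$ and $\mathbb{A}+DF_{\varepsilon}(w^{*})=\mathbb{A}G_{\varepsilon}$. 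The hypothesis $0\notin\mbox{Re}\sigma(\mathbb{A}+DF_{0}(w^{*}))$ gives in particular $0\in\rho(\mathbb{A}+DF_{0}(w^{*}))$, hence $G_{0}$ is an isomorphism of $X$ and $(\mathbb{A}+DF_{0}(w^{*}))^{-1}=G_{0}^{-1}\mathbb{A}^{-1}$. Writing $G_{\varepsilon}=G_{0}\big(I+G_{0}^{-1}\mathbb{A}^{-1}(DF_{\varepsilon}(w^{*})-DF_{0}(w^{*}))\big)$ and using the norm convergence $\|DF_{\varepsilon}(w^{*})-DF_{0}(w^{*})\|_{\mathcal{L}(X,\mathcal{H})}\to0$ of Lemma \ref{resultsconvnonlinearity}(ii), the bracket has a Neumann-series inverse of norm $\leqslant2$ once $\varepsilon$ is small, say $\varepsilon\in[0,\varepsilon_{0}]$, so $G_{\varepsilon}$ is an isomorphism with $\|G_{\varepsilon}^{-1}\|_{\mathcal{L}(X)}\leqslant2\|G_{0}^{-1}\|_{\mathcal{L}(X)}$. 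Consequently $0\in\rho(\mathbb{A}+DF_{\varepsilon}(w^{*}))$, $(\mathbb{A}+DF_{\varepsilon}(w^{*}))^{-1}=G_{\varepsilon}^{-1}\mathbb{A}^{-1}$, and \eqref{inverse} holds with $C:=2\|G_{0}^{-1}\|_{\mathcal{L}(X)}\,\|\mathbb{A}^{-1}\|_{\mathcal{L}(\mathcal{H},X)}$; moreover this operator is compact, being the composition of $G_{\varepsilon}^{-1}\in\mathcal{L}(X)$ with the compact operator $\mathbb{A}^{-1}\in\mathcal{L}(\mathcal{H},X)$.

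To strengthen $0\in\rho(\mathbb{A}+DF_{\varepsilon}(w^{*}))$ into $0\notin\mbox{Re}\sigma(\mathbb{A}+DF_{\varepsilon}(w^{*}))$, I would rerun the Neumann-series estimate with $i\beta I-(\mathbb{A}+DF_{0}(w^{*}))$ in place of $\mathbb{A}+DF_{0}(w^{*})$, \emph{uniformly} over $\beta\in\mathbb{R}$; this requires $M_{0}:=\sup_{\beta\in\mathbb{R}}\|(i\beta I-(\mathbb{A}+DF_{0}(w^{*})))^{-1}\|_{\mathcal{L}(\mathcal{H},X)}<\infty$. Near $\beta=0$ this is the continuity of the resolvent on $i\mathbb{R}\subset\rho(\mathbb{A}+DF_{0}(w^{*}))$ (exactly where the hypothesis, i.e.\ hyperbolicity of $\mathbb{A}+DF_{0}(w^{*})$, is used), and for $|\beta|$ large one uses that $-\mathbb{A}$ is sectorial (Theorem \ref{T24}) and that $F_{\varepsilon}$ is of lower order relative to $\mathbb{A}$ — it maps $X$ into $\mathcal{H}$, which is continuously embedded in $X_{\beta_{0}-1}$ for some $\beta_{0}\in(0,1)$ — so that $\mathbb{A}+DF_{0}(w^{*})$ inherits the sectorial resolvent decay and $\|(i\beta I-(\mathbb{A}+DF_{0}(w^{*})))^{-1}\|_{\mathcal{L}(\mathcal{H},X)}\to0$ as $|\beta|\to\infty$. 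Then, once $\|DF_{\varepsilon}(w^{*})-DF_{0}(w^{*})\|_{\mathcal{L}(X,\mathcal{H})}<(2M_{0})^{-1}$, the Neumann series gives $i\beta\in\rho(\mathbb{A}+DF_{\varepsilon}(w^{*}))$ for every $\beta\in\mathbb{R}$; shrinking $\varepsilon_{0}$ if necessary, this holds for all $\varepsilon\in[0,\varepsilon_{0}]$. This passage from the single point $\lambda=0$ to the whole imaginary axis is the only nonroutine point of the proof, and it is where the sectorial structure and the lower-order character of $F_{\varepsilon}$ are indispensable.

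Finally, for the relative compactness and convergence I would first record that $G_{\varepsilon}^{-1}\to G_{0}^{-1}$ in $\mathcal{L}(X)$, from the resolvent identity $G_{\varepsilon}^{-1}-G_{0}^{-1}=G_{\varepsilon}^{-1}\mathbb{A}^{-1}\big(DF_{0}(w^{*})-DF_{\varepsilon}(w^{*})\big)G_{0}^{-1}$ together with the uniform bound on $G_{\varepsilon}^{-1}$ and Lemma \ref{resultsconvnonlinearity}(ii). If $\{w_{\varepsilon}\}_{\varepsilon\in(0,\varepsilon_{0}]}$ is bounded in $\mathcal{H}$, then $\{\mathbb{A}^{-1}w_{\varepsilon}\}$ is relatively compact in $X$ (compactness of $\mathbb{A}^{-1}\in\mathcal{L}(\mathcal{H},X)$), and along any subsequence with $\mathbb{A}^{-1}w_{\varepsilon_{k}}\to\zeta$ in $X$ one has $(\mathbb{A}+DF_{\varepsilon_{k}}(w^{*}))^{-1}w_{\varepsilon_{k}}=G_{\varepsilon_{k}}^{-1}(\mathbb{A}^{-1}w_{\varepsilon_{k}})\to G_{0}^{-1}\zeta$ in $X$, since $\|G_{\varepsilon_{k}}^{-1}(\mathbb{A}^{-1}w_{\varepsilon_{k}})-G_{0}^{-1}\zeta\|_{X}\leqslant\|G_{\varepsilon_{k}}^{-1}\|_{\mathcal{L}(X)}\|\mathbb{A}^{-1}w_{\varepsilon_{k}}-\zeta\|_{X}+\|G_{\varepsilon_{k}}^{-1}-G_{0}^{-1}\|_{\mathcal{L}(X)}\|\zeta\|_{X}\to0$; this gives the relative compactness. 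And if $w_{\varepsilon}\to w$ in $\mathcal{H}$, then $\mathbb{A}^{-1}w_{\varepsilon}\to\mathbb{A}^{-1}w$ in $X$ and the same estimate yields $(\mathbb{A}+DF_{\varepsilon}(w^{*}))^{-1}w_{\varepsilon}=G_{\varepsilon}^{-1}(\mathbb{A}^{-1}w_{\varepsilon})\to G_{0}^{-1}\mathbb{A}^{-1}w=(\mathbb{A}+DF_{0}(w^{*}))^{-1}w$ in $X$, as claimed. The only bookkeeping to watch throughout is to keep straight the realizations of $\mathbb{A}$ on $X$, on $\mathcal{H}$ and on the scale $X_{\alpha}$ when forming the factorizations.
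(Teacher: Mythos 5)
Your proof is correct, but it reaches the key uniform bound \eqref{inverse} by a genuinely different mechanism than the paper. You factor $\mathbb{A}+DF_{\varepsilon}(w^{*})=\mathbb{A}G_{\varepsilon}$ and invert $G_{\varepsilon}$ by a Neumann series around $G_{0}$, which works because Lemma \ref{resultsconvnonlinearity}(ii) gives convergence $DF_{\varepsilon}(w^{*})\to DF_{0}(w^{*})$ in the \emph{operator norm} of $\mathcal{L}(X,\mathcal{H})$ for the fixed point $w^{*}$; this immediately yields invertibility of $G_{\varepsilon}$, the uniform bound $\|G_{\varepsilon}^{-1}\|\leqslant 2\|G_{0}^{-1}\|$, and (via your resolvent identity) norm convergence $G_{\varepsilon}^{-1}\to G_{0}^{-1}$, from which the compactness, collective compactness and convergence statements all follow by composing with the compact operator $\mathbb{A}^{-1}$. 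The paper uses the same factorization $(\mathbb{A}+DF_{\varepsilon}(w^{*}))^{-1}=(I+J_{\varepsilon})^{-1}\mathbb{A}^{-1}$ with $J_{\varepsilon}=\mathbb{A}^{-1}DF_{\varepsilon}(w^{*})$, but obtains the uniform bound by contradiction: a normalized sequence $z_{n}$ with $\|(I+J_{\varepsilon_{n}})z_{n}\|\to0$ is upgraded to a convergent subsequence using the collective compactness of $\{J_{\varepsilon}z_{\varepsilon}\}$ from Lemma \ref{lemaass2}, producing a nonzero element of $N(I+J_{0})$ and contradicting hyperbolicity; the Fredholm alternative then gives surjectivity. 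The paper's route is the one that survives when only strong convergence plus collective compactness is available (the standard situation in domain/boundary perturbation problems), whereas your route is shorter and sharper here precisely because the norm convergence happens to hold for a single fixed $w^{*}$. Two remarks: your second paragraph, extending the argument from the point $\lambda=0$ to the whole imaginary axis, goes beyond what the paper does --- the paper simply identifies ``$0\not\in\operatorname{Re}\sigma$'' with $1\in\rho(J_{\varepsilon})$, i.e.\ with $0\in\rho(\mathbb{A}+DF_{\varepsilon}(w^{*}))$, which is all that is used in Theorem \ref{Teoemsas}; your sketch of the uniform resolvent bound along $i\mathbb{R}$ is plausible but the decay estimate $\|(i\beta-(\mathbb{A}+DF_{0}(w^{*})))^{-1}\|_{\mathcal{L}(\mathcal{H},X)}\to0$ as $|\beta|\to\infty$ is only asserted, not proved, so treat that part as an outline rather than a complete argument. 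Your closing caveat about keeping the realizations of $\mathbb{A}$ on $X$, $\mathcal{H}$ and the extrapolation scale straight is well taken and applies equally to the paper's own proof.
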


\begin{proof}
First, for each $\varepsilon \in [0,\varepsilon_0]$, we note
\[
(\mathbb{A}+DF_{\varepsilon}(w^{*}))^{-1}=[\mathbb{A}(I+\mathbb{A}^{-1}DF_{\varepsilon}(w^{*}))]^{-1}=(I+\mathbb{A}^{-1}DF_{\varepsilon}(w^{*}))^{-1}\mathbb{A}^{-1}.
\]
Then, prove that $0\not\in\mbox{Re}\sigma(\mathbb{A}+DF_{\varepsilon}(w^{*}))$ it is equivalent to prove that $1 \in \rho(\mathbb{A}^{-1}DF_{\varepsilon}(w^{*}))$. Moreover, to prove that there exist $\varepsilon_0>0$ and $C>0$ independent of $\varepsilon$ such that
\eqref{inverse} holds, it is enough to prove that there exist $\varepsilon_0>0$ and $K>0$ independent of $\varepsilon$ such that

\begin{equation}\label{inverse1}
\|(I+\mathbb{A}^{-1}DF_{\varepsilon}(w^{*}))^{-1}\|_{\mathcal{L}(X)} \leqslant K, \quad \forall \varepsilon \in [0,\varepsilon_0].
\end{equation}
Indeed, we note that 
\[
\begin{split}
\|(\mathbb{A}+DF_{\varepsilon}(w^{*}))^{-1}\|_{\mathcal{L}(\mathcal{H},X)} & \leqslant \|(I+\mathbb{A}^{-1}DF_{\varepsilon}(w^{*}))^{-1}\|_{\mathcal{L}(X)} \|\mathbb{A}^{-1}\|_{\mathcal{L}(\mathcal{H},X)} \\
&= K \|\mathbb{A}^{-1}\|_{\mathcal{L}(\mathcal{H},X)} =C,
\end{split}
\]
for all $\varepsilon \in [0,\varepsilon_0]$.

Then we will show \eqref{inverse1}. From hypothesis $0\not\in\mbox{Re}\sigma(\mathbb{A}+DF_0(w^{*}))$ then $1 \in \rho(\mathbb{A}^{-1}DF_{0}(w^{*}))$. Thus, there exists the inverse 
\[
(I+\mathbb{A}^{-1}DF_{\varepsilon}(w^{*}))^{-1}: X \to X
\]
and, particular we have $N(I+\mathbb{A}^{-1}DF_{0}(w^{*}))=\{0\}$.

For simplicity of notation, let $J_{\varepsilon}=\mathbb{A}^{-1}DF_{\varepsilon}(w^{*})$, for all $\varepsilon \in [0,\varepsilon_0]$. From Lemma \ref{lemaass2} we have that, for each $\varepsilon \in [0,\varepsilon_0]$ fixed, the operator $J_{\varepsilon}: X \to X$ is compact. Using the compactness of $J_{\varepsilon}$ we will show that \eqref{inverse1} hold, if and only if,
\begin{equation}\label{operatorJ}
\|(I+J_{\varepsilon})z_{\varepsilon}\|_{X} \geqslant \frac{1}{K}, \quad \forall \varepsilon \in [0,\varepsilon_0] \quad \mbox{and} \quad  \|z_{\varepsilon}\|_{X}=1.
\end{equation} 

Indeed, suppose that \eqref{inverse1} holds, then there exists the inverse $(I+J_{\varepsilon})^{-1}: X \to X$ and it is continuous. Moreover, 
\[
\|(I+J_{\varepsilon})^{-1}y_{\varepsilon}\|_{X} \leqslant K \|y_{\varepsilon}\|_{X}, \quad \forall  \varepsilon \in [0,\varepsilon_0]\quad  \mbox{and} \quad \forall y_{\varepsilon} \in X.
\]

Now if $z_{\varepsilon} \in X$ is such that $\|z_{\varepsilon}\|_{X}=1$ and taking $y_{\varepsilon}=(I+J_{\varepsilon})z_{\varepsilon}$, we have
\[
\|(I+J_{\varepsilon})^{-1}(I+J_{\varepsilon})z_{\varepsilon}\|_{X} \leqslant K \|(I+J_{\varepsilon})z_{\varepsilon}\|_{X} 
\]
and
\[
1=\|z_{\varepsilon}\|_{X}\leqslant  K \|(I+J_{\varepsilon})z_{\varepsilon}\|_{X},
\]
in other words, 
\[
\|(I+J_{\varepsilon})z_{\varepsilon}\|_{X} \geqslant \frac{1}{K}.
\]

On the other hand, suppose that \eqref{operatorJ} holds. We will show that there exists the inverse $(I+J_{\varepsilon})^{-1}: X \to X$, it is continuous and satisfies \eqref{inverse1}. From \eqref{operatorJ}, we obtain the following estimative 
\begin{equation}\label{operatorJ1}
\|(I+J_{\varepsilon})z_{\varepsilon}\|_{X} \geqslant \frac{1}{K}\|z_{\varepsilon}\|_{X}, \quad \forall \varepsilon \in [0,\varepsilon_0] \quad \mbox{and} \quad  \forall z_{\varepsilon}\in X.
\end{equation} 

Now, let $z_{\varepsilon}\in X$ such that $(I+J_{\varepsilon})z_{\varepsilon}=0$. From \eqref{operatorJ1} follows $z_{\varepsilon}=0$. Thus, for each $\varepsilon \in [0,\varepsilon_0], \ N(I+J_{\varepsilon})=\{0\}$ and the operator $I+J_{\varepsilon}$ is injective. Since there exists the inverse $(I+J_{\varepsilon})^{-1}:R(I+J_{\varepsilon}) \to X$ and $J_{\varepsilon}$ is compact, then by Fredhlom Alternative Theorem, we have
\[
N(I+J_{\varepsilon})=\{0\}  \Leftrightarrow  R(I+J_{\varepsilon})=X.
\]
Then $I+J_{\varepsilon}$  is bijective, thus there exists the inverse $(I+J_{\varepsilon})^{-1}; X \to X$.

Now, taking $y_{\varepsilon} \in X$ there exists $z_{\varepsilon} \in X$ such that $y_{\varepsilon}=(I+J_{\varepsilon})z_{\varepsilon}$ and $z_{\varepsilon}=(I+J_{\varepsilon})^{-1}y_{\varepsilon}$. From \eqref{operatorJ1} we have 
\[
\|(I+J_{\varepsilon})^{-1}y_{\varepsilon}\|_{X}=\|z_{\varepsilon}\|_{X} \leqslant K \|(I+J_{\varepsilon})z_{\varepsilon}\|_{X}=K \|y_{\varepsilon}\|_{X}
\]
and
\[
\|(I+J_{\varepsilon})^{-1}\|_{\mathcal{L}(X)} \leqslant K, \quad \forall \varepsilon \in [0,\varepsilon_0],
\]
and thus \eqref{inverse1} holds.

Therefore \eqref{inverse1} and \eqref{operatorJ} are equivalents, then we will show \eqref{operatorJ}. Suppose that \eqref{operatorJ} is not true, that is, there exists a sequence $\{z_{n}\}_{n \in \mathbb{N}}$ in $X$, with $\|z_{n}\|_{X}=1$ and $\varepsilon_{n} \to 0$, as $n \to \infty$, such that
\[
\|(I+J_{\varepsilon_n})z_{n}\|_{X} \to 0, \quad \mbox{as} \quad n \to \infty.
\]

From Lemma \ref{lemaass2} we get that $\{J_{\varepsilon_{n}}z_{n}\}_{n \in \mathbb{N}}$ is relatively compact. Thus, $\{J_{\varepsilon_{n}}z_{n}\}_{n \in \mathbb{N}}$ has a convergent subsequence, which still we denote by $\{J_{\varepsilon_{n}}z_{n}\}_{n \in \mathbb{N}}$, with limit $z \in X$, that is,
 \[
J_{\varepsilon_{n}} z_{n} \to z \quad \mbox{in} \quad X, \quad \mbox{as} \quad n \to \infty.
\]

Since $z_{n}+J_{\varepsilon_{n}} z_{n} \to 0$ in $X$, as $n \to \infty$, then $z_{n} \to -z$ in $X$, as $n \to \infty$ and thus $\|z\|_{X}=1$. Moreover, using the Lemma \ref{lemaass2} we get $J_{\varepsilon_{n}}z_{n} \to -J_{0}z$ as $n \to \infty$. Then,
\[
z_n + J_{\varepsilon_{n}} z_n \to -(z + J_{0} z) \quad \mbox{in} \quad X, \quad \mbox{as} \quad n\to \infty.
\]
By uniqueness of the limit, $(I+J_{0})z=0$, with $z\neq0$, contradicting the fact of the operator $I+J_{0}$ be injective, because $0\not\in\mbox{Re}\sigma(\mathbb{A}+DF_0(w^{*}))$. Showing that \eqref{operatorJ} holds. With this we conclude that there exist $\varepsilon_{0}>0$ and $C>0$ independent of $\varepsilon$ such that \eqref{inverse} holds.

Now, for each $\varepsilon\in[0,\varepsilon_{0}]$, the operator $(\mathbb{A}+DF_{\varepsilon}(w^{*}))^{-1}$ is compact and the prove of this compactness follows similarly to account below.  

Let $\{w_{\varepsilon}\}_{\varepsilon\in(0,\varepsilon_{0}]}$ be a bounded family in $\mathcal{H}$. For each $\varepsilon\in(0,\varepsilon_{0}]$, let  $\vartheta_{\varepsilon}=(\mathbb{A}+DF_{\varepsilon}(w^{*}))^{-1}w_{\varepsilon}.$ From \eqref{inverse} we have
\[
\begin{split}
\displaystyle \left\|\vartheta_{\varepsilon}\right\|_{X}&\leqslant \|(\mathbb{A}+DF_{\varepsilon}(w^{*}))^{-1}w_{\varepsilon}\|_{X} \\
&\leqslant \|(\mathbb{A}+DF_{\varepsilon}(w^{*}))^{-1}\|_{\mathcal{L}(\mathcal{H},X)}\left\|w_{\varepsilon}\right\|_{\mathcal{H}}\\
&\leqslant C\left\|w_{\varepsilon}\right\|_{\mathcal{H}}.
\end{split}
\]

Hence, $\{\vartheta_{\varepsilon}\}_{\varepsilon\in(0,\varepsilon_{0}]}$ is a bounded family in $X$. Moreover, 
\[
\vartheta_{\varepsilon}=(\mathbb{A}+DF_{\varepsilon}(w^{*}))^{-1}w_{\varepsilon}=(I+\mathbb{A}^{-1}DF_{\varepsilon}(w^{*}))^{-1}\mathbb{A}^{-1}w_{\varepsilon}
\]
in other words,
\[
(I+\mathbb{A}^{-1}DF_{\varepsilon}(w^{*}))\vartheta_{\varepsilon}=\mathbb{A}^{-1}w_{\varepsilon},
\]
and equivalently,
\[
\vartheta_{\varepsilon}=-\mathbb{A}^{-1}DF_{\varepsilon}(w^{*})\vartheta_{\varepsilon}+\mathbb{A}^{-1}w_{\varepsilon}.
\]

By compactness of $\mathbb{A}^{-1}:\mathcal{H}\to X$, we get that $\{\mathbb{A}^{-1}w_{\varepsilon}\}_{\varepsilon\in(0,\varepsilon_{0}]}$ has a convergent subsequence in $X$. Moreover, using the Lemma \ref{lemaass2}, we have that $\{\mathbb{A}^{-1}DF_{\varepsilon}(w^{*})\vartheta_{\varepsilon}\}_{\varepsilon\in(0,\varepsilon_{0}]}$ is relatively compact in $X$, then $\{\mathbb{A}^{-1}DF_{\varepsilon}(w^{*})\vartheta_{\varepsilon}\}_{\varepsilon\in(0,\varepsilon_{0}]}$ has a convergent subsequence in $X$. Therefore, $\{\vartheta_{\varepsilon}\}_{\varepsilon\in(0,\varepsilon_{0}]}$ has a convergent subsequence in $X$, that is, the family  $\{(\mathbb{A}+DF_{\varepsilon}(w^{*}))^{-1}w_{\varepsilon}\}_{\varepsilon\in(0,\varepsilon_{0}]}$ has a convergent subsequence in $X$, thus it is relatively compact in $X$.

Now, we take $w_{\varepsilon}\to w$ in $\mathcal{H}$, as $\varepsilon\to0$. By continuity of operator $\mathbb{A}^{-1}:\mathcal{H}\to X$, we have
\[
\mathbb{A}^{-1}w_{\varepsilon}\to \mathbb{A}^{-1}w \quad \mbox{in}\quad X, \quad \mbox{as $\varepsilon\to0$}.
\]
Moreover, $\{w_{\varepsilon}\}_{\varepsilon\in(0,\varepsilon_{0}]}$ is bounded in $\mathcal{H}$, for some $\varepsilon_{0}>0$ sufficiently small, and we have that from the above that $\{\vartheta_{\varepsilon}\}_{\varepsilon\in(0,\varepsilon_{0}]}$, with $\varepsilon_{0}>0$ sufficiently small, has a convergent subsequence, which we again denote by $\{\vartheta_{\varepsilon}\}_{\varepsilon\in(0,\varepsilon_{0}]}$, with limit $\vartheta\in X$, that is, 
\[
\vartheta_{\varepsilon}\to \vartheta\quad \mbox{in}\quad X,\quad \mbox{as $\varepsilon\to0$}.
\]

From Lemma \ref{lemaass2} we get
\[
\mathbb{A}^{-1}DF_{\varepsilon}(w^{*})\vartheta_{\varepsilon}\to \mathbb{A}^{-1}DF_{0}(w^{*})\vartheta\quad \mbox{in}\quad X, \quad \mbox{as $\varepsilon\to0$}.
\] 

Thus, $\vartheta$ satisfies $\vartheta=-\mathbb{A}^{-1}DF_{0}(w^{*})\vartheta+\mathbb{A}^{-1}w$, and so $\vartheta=(\mathbb{A}+DF_{0}(w^{*}))^{-1}w$. Therefore,
\[
(\mathbb{A}+DF_{\varepsilon}(w^{*}))^{-1}w_{\varepsilon}\to (\mathbb{A}+DF_{0}(w^{*}))^{-1}w \quad \mbox{in}\quad X,\quad \mbox{as $\varepsilon\to0$}.
\]
The limit above is independent of the subsequence, thus whole family $\{(\mathbb{A}+DF_{\varepsilon}(w^{*}))^{-1}w_{\varepsilon}\}_{\varepsilon\in(0,\varepsilon_{0}]}$ converges to $(\mathbb{A}+DF_{0}(w^{*}))^{-1}w$ in $X$, as $\varepsilon\to 0$. 
\end{proof}

\begin{theorem}\label{Teoemsas}
Suppose that $w^{*}$ is a solution for \eqref{AP2Eq} and that $0\not\in\mbox{Re}\sigma(\mathbb{A}+DF_0(w^{*}))$. Then there are $\varepsilon_0>0$ and $\delta>0$ such that the problem \eqref{AP1Eq} has exactly one solution, $w^{*}_\varepsilon$, in the closed ball centered at $w^{*}$ and radius $\delta$, $\{\xi\in X:\|\xi-w^{*}\|_X\leqslant\delta\}$, for any $\varepsilon\in(0,\varepsilon_0]$. Futhermore, 
\[
\|w^{*}_\varepsilon-w^{*}\|_X\to0,\quad\mbox{as } \varepsilon\to0.
\]
\end{theorem}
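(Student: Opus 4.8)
The plan is to recast the equilibrium equation \eqref{AP1Eq} as a fixed-point problem for a uniform contraction and invoke the Banach fixed-point theorem, the required uniform invertibility being supplied by Lemma~\ref{important}. Since $0\notin\mathrm{Re}\,\sigma(\mathbb{A}+DF_0(w^{*}))$, equivalently $1\in\rho(\mathbb{A}^{-1}DF_0(w^{*}))$, the operator $\mathbb{A}+DF_0(w^{*})$ has a bounded inverse $(\mathbb{A}+DF_0(w^{*}))^{-1}=(I+\mathbb{A}^{-1}DF_0(w^{*}))^{-1}\mathbb{A}^{-1}\in\mathcal{L}(\mathcal{H},X)$; write $C_{*}$ for its norm. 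For $\varepsilon\in[0,\varepsilon_0]$ set
\[
T_{\varepsilon}(\xi):=-(\mathbb{A}+DF_0(w^{*}))^{-1}\bigl(F_{\varepsilon}(\xi)-DF_0(w^{*})\xi\bigr),\qquad\xi\in X.
\]
Using that solving \eqref{AP1Eq} amounts to solving $\xi=-\mathbb{A}^{-1}F_{\varepsilon}(\xi)$ (as in Lemma~\ref{compacto}) and adding and subtracting $\mathbb{A}^{-1}DF_0(w^{*})\xi$, one checks that $\xi=T_{\varepsilon}(\xi)$ if and only if $\xi$ is an equilibrium of \eqref{AP1}; moreover $w^{*}=T_{0}(w^{*})$, because $w^{*}$ solves \eqref{AP2Eq}. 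So it suffices to produce $\delta>0$ and $\varepsilon_0>0$ such that each $T_{\varepsilon}$ maps the closed ball $\overline{B}_{\delta}(w^{*})=\{\xi\in X:\|\xi-w^{*}\|_X\leqslant\delta\}$ into itself and is a $\tfrac12$-contraction there, uniformly in $\varepsilon\in[0,\varepsilon_0]$.

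For the contraction bound, for $\xi_1,\xi_2\in\overline{B}_{\delta}(w^{*})$ I would use the fundamental theorem of calculus,
\[
F_{\varepsilon}(\xi_1)-F_{\varepsilon}(\xi_2)-DF_0(w^{*})(\xi_1-\xi_2)=\int_0^1\bigl[DF_{\varepsilon}(\xi_2+t(\xi_1-\xi_2))-DF_0(w^{*})\bigr](\xi_1-\xi_2)\,dt,
\]
and for $\eta\in\overline{B}_{\delta}(w^{*})$ estimate
\[
\|DF_{\varepsilon}(\eta)-DF_0(w^{*})\|_{\mathcal{L}(X,\mathcal{H})}\leqslant\|DF_{\varepsilon}(\eta)-DF_{\varepsilon}(w^{*})\|_{\mathcal{L}(X,\mathcal{H})}+\|DF_{\varepsilon}(w^{*})-DF_0(w^{*})\|_{\mathcal{L}(X,\mathcal{H})}\leqslant L\delta+\rho(\varepsilon),
\]
where $L$ is the uniform Lipschitz constant of $DF_{\varepsilon}$ from Lemma~\ref{Lipschitz_01}(ii) and $\rho(\varepsilon):=\|DF_{\varepsilon}(w^{*})-DF_0(w^{*})\|_{\mathcal{L}(X,\mathcal{H})}\to0$ as $\varepsilon\to0$ by Lemma~\ref{resultsconvnonlinearity}(ii). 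Fixing first $\delta>0$ with $C_{*}L\delta\leqslant\tfrac14$ and then $\varepsilon_0>0$ with $C_{*}\rho(\varepsilon)\leqslant\tfrac14$ for $\varepsilon\in[0,\varepsilon_0]$ gives $\|T_{\varepsilon}(\xi_1)-T_{\varepsilon}(\xi_2)\|_X\leqslant C_{*}(L\delta+\rho(\varepsilon))\|\xi_1-\xi_2\|_X\leqslant\tfrac12\|\xi_1-\xi_2\|_X$; in particular $T_{0}$ is a $\tfrac12$-contraction on $\overline{B}_{\delta}(w^{*})$.

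For the self-mapping property, since $w^{*}=T_{0}(w^{*})$ I would split, for $\xi\in\overline{B}_{\delta}(w^{*})$,
\[
\|T_{\varepsilon}(\xi)-w^{*}\|_X\leqslant\|T_{\varepsilon}(\xi)-T_{0}(\xi)\|_X+\|T_{0}(\xi)-T_{0}(w^{*})\|_X\leqslant C_{*}\|F_{\varepsilon}(\xi)-F_{0}(\xi)\|_{\mathcal{H}}+\tfrac12\delta,
\]
and shrink $\varepsilon_0$ once more so that $C_{*}\sup_{\xi\in\overline{B}_{\delta}(w^{*})}\|F_{\varepsilon}(\xi)-F_{0}(\xi)\|_{\mathcal{H}}\leqslant\tfrac12\delta$ for $\varepsilon\in[0,\varepsilon_0]$, which is possible by Lemma~\ref{Lipschitz}(iii) (the convergence there is uniform on the bounded set $\overline{B}_{\delta}(w^{*})$). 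Then $T_{\varepsilon}(\overline{B}_{\delta}(w^{*}))\subset\overline{B}_{\delta}(w^{*})$, so the Banach fixed-point theorem yields a unique $w^{*}_{\varepsilon}\in\overline{B}_{\delta}(w^{*})$ with $T_{\varepsilon}(w^{*}_{\varepsilon})=w^{*}_{\varepsilon}$, i.e. a unique solution of \eqref{AP1Eq} in that ball. For the convergence I would use the same comparison one last time:
\[
\|w^{*}_{\varepsilon}-w^{*}\|_X=\|T_{\varepsilon}(w^{*}_{\varepsilon})-T_{0}(w^{*})\|_X\leqslant C_{*}\|F_{\varepsilon}(w^{*}_{\varepsilon})-F_{0}(w^{*}_{\varepsilon})\|_{\mathcal{H}}+\tfrac12\|w^{*}_{\varepsilon}-w^{*}\|_X,
\]
whence $\|w^{*}_{\varepsilon}-w^{*}\|_X\leqslant2C_{*}\|F_{\varepsilon}(w^{*}_{\varepsilon})-F_{0}(w^{*}_{\varepsilon})\|_{\mathcal{H}}\to0$ as $\varepsilon\to0$, again by Lemma~\ref{Lipschitz}(iii) since $\{w^{*}_{\varepsilon}\}_{\varepsilon\in(0,\varepsilon_0]}\subset\overline{B}_{\delta}(w^{*})$ is bounded.

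The scheme is largely routine once Lemmas~\ref{Lipschitz}, \ref{Lipschitz_01}, \ref{resultsconvnonlinearity} and \ref{important} are in hand; the one place that needs care is the order in which the constants are chosen — $\delta$ must be selected first, so that the Lipschitz term $C_{*}L\delta$ is absorbed into the contraction constant, and only afterwards may $\varepsilon_0$ be fixed (and shrunk a second time) to control the $\varepsilon$-dependent quantities $\rho(\varepsilon)$ and $\sup_{\overline{B}_{\delta}(w^{*})}\|F_{\varepsilon}-F_{0}\|_{\mathcal{H}}$ on the now-fixed ball. It is also worth keeping in mind that $DF_0(w^{*})$ maps $X$ into $\mathcal{H}$ rather than into $X$, so $(\mathbb{A}+DF_0(w^{*}))^{-1}$ must be read as an element of $\mathcal{L}(\mathcal{H},X)$ throughout — precisely the mapping property guaranteed by Lemma~\ref{important}.
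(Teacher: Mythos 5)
Your argument is correct, and it follows the same overall scheme as the paper's proof (a Newton-type contraction on a small closed ball $\overline{B}_{\delta}(w^{*})$, with $\delta$ fixed before $\varepsilon_0$), but with one genuine difference in the choice of linearization. The paper sets $T_{\varepsilon}(\xi)=-(\mathbb{A}+DF_{\varepsilon}(w^{*}))^{-1}\bigl(F_{\varepsilon}(\xi)-DF_{\varepsilon}(w^{*})\xi\bigr)$, i.e.\ it inverts the $\varepsilon$-dependent operator $\mathbb{A}+DF_{\varepsilon}(w^{*})$; this forces it to rely on Lemma \ref{important} both for the uniform bound $\|(\mathbb{A}+DF_{\varepsilon}(w^{*}))^{-1}\|_{\mathcal{L}(\mathcal{H},X)}\leqslant C$ and for the resolvent convergence used in establishing $T_{\varepsilon}(w^{*})\to w^{*}$. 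You instead freeze the linearization at the limit operator $(\mathbb{A}+DF_{0}(w^{*}))^{-1}$, whose existence is exactly the hypothesis $0\notin\mathrm{Re}\,\sigma(\mathbb{A}+DF_{0}(w^{*}))$; the fixed-point equivalence still holds (the algebra $\xi=T_{\varepsilon}(\xi)\Leftrightarrow\mathbb{A}\xi+F_{\varepsilon}(\xi)=0$ goes through because the $DF_{0}(w^{*})\xi$ terms cancel), and the price is that the contraction constant picks up the extra term $C_{*}\rho(\varepsilon)=C_{*}\|DF_{\varepsilon}(w^{*})-DF_{0}(w^{*})\|_{\mathcal{L}(X,\mathcal{H})}$, which Lemma \ref{resultsconvnonlinearity}(ii) controls. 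The net effect is that your route needs only Lemmas \ref{Lipschitz}(iii), \ref{Lipschitz_01}(ii) and \ref{resultsconvnonlinearity}(ii), and bypasses the uniform invertibility machinery of Lemma \ref{important} entirely; the paper's route, on the other hand, produces along the way the operators $(\mathbb{A}+DF_{\varepsilon}(w^{*}))^{-1}$ and their convergence, which are reused later (Remark \ref{remarkhyperbolic} and the unstable-manifold analysis of Section \ref{Sec:Lower_At}), so Lemma \ref{important} is not wasted effort in the larger scheme. Your ordering of the constants ($\delta$ first via $C_{*}L\delta\leqslant\tfrac14$, then $\varepsilon_0$ twice) is exactly the care the argument requires, and the final estimate $\|w^{*}_{\varepsilon}-w^{*}\|_{X}\leqslant 2C_{*}\|F_{\varepsilon}(w^{*}_{\varepsilon})-F_{0}(w^{*}_{\varepsilon})\|_{\mathcal{H}}$ correctly exploits the uniformity on bounded sets in Lemma \ref{Lipschitz}(iii).
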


\begin{proof}
Initially, note that from Lemma \ref{important} there exists $\varepsilon_0 >0$ and $C>0$, independent of $\varepsilon_0$, such that
\begin{equation}
\label{limit}
\|(\mathbb{A}+DF_{\varepsilon}(w^{*}))^{-1}\|_{\mathcal{L}(\mathcal{H},X)} \leqslant C, \quad \forall \varepsilon \in (0,\varepsilon_0].
\end{equation}
We note that if $w_{\varepsilon}, \varepsilon \in (0,\varepsilon_0]$, is a solution of \eqref{AP1Eq}, then
\[
0=(\mathbb{A}+DF_{\varepsilon}(w^{*}))[w_{\varepsilon}+(\mathbb{A}+DF_{\varepsilon}(w^{*}))^{-1}(F_{\varepsilon}(w_{\varepsilon})-DF_{\varepsilon}(w^{*})w_{\varepsilon})].
\]

Since $(\mathbb{A}+DF_{\varepsilon}(w^{*}))$ is invertible, then $w_{\varepsilon}$ is a solution of \eqref{AP1Eq} if and only if $w_{\varepsilon}$ is a fixed point of the map $T_{\varepsilon}:X \to X$ defined by
\[
T_{\varepsilon}(w_{\varepsilon})=-(\mathbb{A}+DF_{\varepsilon}(w^{*}))^{-1}(F_{\varepsilon}(w_{\varepsilon})-DF_{\varepsilon}(w^{*})w_{\varepsilon}).
\]
We have that 
\begin{equation}\label{conv}
T_{\varepsilon}(w^{*}) \to w^{*} \quad \mbox{in} \quad X \quad \mbox{as} \quad \varepsilon \to 0.
\end{equation}
In fact, using \eqref{limit}, item \textit{(iii)} of Lemma \ref{Lipschitz}, item \textit{(iv)} of Lemma \ref{resultsconvnonlinearity} and Lemma \ref{important}, for $\varepsilon \in (0,\varepsilon_0]$, we have
\[
\begin{split}
&\|T_{\varepsilon}(w^{*})-w^{*}\|_{X} =\|T_{\varepsilon}(w^{*})-T(w^{*})\|_{X} \\
&=\|-(\mathbb{A}+DF_{\varepsilon}(w^{*}))^{-1}(F_{\varepsilon}(w^{*})-DF_{\varepsilon}(w^{*})w^{*})+ (\mathbb{A}+DF_{0}(w^{*}))^{-1}(F_{0}(w^{*})-DF_{0}(w^{*})w^{*})\|_{X} \\
& \leqslant \|-(\mathbb{A}+DF_{\varepsilon}(w^{*}))^{-1}[F_{\varepsilon}(w^{*})-DF_{\varepsilon}(w^{*})w^{*})-(F_{0}(w^{*})-DF_{0}(w^{*})w^{*})]\|_{X} \\
&+ \|[(\mathbb{A}+DF_{\varepsilon}(w^{*}))^{-1}-(\mathbb{A}+DF_{0}(w^{*}))^{-1}](DF_{0}(w^{*})w^{*}-F_{0}(w^{*}))\|_{X} \\
& \leqslant C(\|F_{\varepsilon}(w^{*})-F_{0}(w^{*})\|_{\mathcal{H}} + \|DF_{\varepsilon}(w^{*})w^{*}-DF_{0}(w^{*})w^{*}\|_{\mathcal{H}}) \\
& + \|[(\mathbb{A}+DF_{\varepsilon}(w^{*}))^{-1}-(\mathbb{A}+DF_{0}(w^{*}))^{-1}](DF_{0}(w^{*})w^{*}-F_{0}(w^{*}))\|_{X} \to 0, \quad \mbox{as $\varepsilon \to 0$}.
\end{split}
\]

Next we prove that there exists $\delta >0$ and that for $\varepsilon \in (0,\varepsilon_0]$ the map $T_{\varepsilon}$ is contraction
from 
\[
\bar{B}_{\delta}(w^{*})=\{\xi \in X : \|\xi-w^{*}\|_{X} \leqslant \delta\}
\]
into itself, uniformly in $\varepsilon$. First note that from Lemma \ref{Lipschitz_01} there exist $\tilde{\delta}=\tilde{\delta}(C)>0$ independent of $\varepsilon$ such that
\begin{equation}\label{dif}
C\|F_{\varepsilon}(w_{\varepsilon})-F_{\varepsilon}(z_{\varepsilon})-DF_{\varepsilon}(w^{*})(w_{\varepsilon}-z_{\varepsilon})\|_{\mathcal{H}} \leqslant \frac{1}{2} \|w_{\varepsilon}-z_{\varepsilon}\|_{X}, \quad \forall \varepsilon \in (0,\varepsilon_0],
\end{equation}
for $\|w_{\varepsilon}-z_{\varepsilon}\|_{X} \leqslant \tilde{\delta}$.

We take $\delta=\frac{\tilde{\delta}}{2}$ and let $w_{\varepsilon},\ z_{\varepsilon} \in \bar{B}_{\delta}(w^{*})$ and using \eqref{limit} and \eqref{dif}, for $\varepsilon \in (0,\varepsilon_0]$ we have
\[
\begin{split}
\|T_{\varepsilon}(w_{\varepsilon})-T_{\varepsilon}(z_{\varepsilon})\|_{X} & = \|-(\mathbb{A}+DF_{\varepsilon}(w^{*}))^{-1}(F_{\varepsilon}(w_{\varepsilon})-F_{\varepsilon}(z_{\varepsilon})-DF_{\varepsilon}(w^{*})(w_{\varepsilon}-z_{\varepsilon})\|_{X} \\
& \leqslant C \|F_{\varepsilon}(w_{\varepsilon})-F_{\varepsilon}(z_{\varepsilon})-DF_{\varepsilon}(w^{*})(w_{\varepsilon}-z_{\varepsilon})\|_{\mathcal{H}} \\
& \leqslant \frac{1}{2} \|w_{\varepsilon}-z_{\varepsilon}\|_{X}.
\end{split}
\]
To show that $T_{\varepsilon}(\bar{B}_{\delta}(w^{*})) \subset \bar{B}_{\delta}(w^{*})$, observe that if $w_{\varepsilon} \in \bar{B}_{\delta}(w^{*})$ and from \eqref{conv} there is $\varepsilon_0$ such that $\|T_{\varepsilon}(w^{*})-w^{*}\|_{X}\leqslant \frac{\delta}{2}$, then
\[
\begin{split}
\|T_{\varepsilon}(w_{\varepsilon})-w^{*}\|_{X} & \leqslant \|T_{\varepsilon}(w_{\varepsilon})-T_{\varepsilon}(w^{*})\|_{X} + \|T_{\varepsilon}(w^{*})-w^{*}\|_{X} \\
& \leqslant \frac{1}{2} \|w_{\varepsilon}-w^{*}\|_{X} + \|T_{\varepsilon}(w^{*})-w^{*}\|_{X} \\
&\leqslant \frac{\delta}{2}+\frac{\delta}{2}=\delta.
\end{split}
\]
Therefore, $T_{\varepsilon}:\bar{B}_{\delta}(w^{*}) \to \bar{B}_{\delta}(w^{*})$ is a contraction, for all $\varepsilon \in (0,\varepsilon_0]$, 
and then by Contraction Theorem  there is only one point fixed $w^{*}_{\varepsilon}$ of $T_{\varepsilon}$ in $\bar{B}_{\delta}(w^{*})$.

Now we will show that $w^{*}_{\varepsilon} \to w^{*}$ in $X$ as $\varepsilon \to 0$. In fact, 
\[
\begin{split}
\|w^{*}_{\varepsilon}-w^{*}\|_{X} &= \|T_{\varepsilon}(w^{*}_{\varepsilon})-w^{*}\|_{X} \leqslant \|T_{\varepsilon}(w^{*}_{\varepsilon})-T_{\varepsilon}(w^{*})\|_{X} + \|T_{\varepsilon}(w^{*})-w^{*}\|_{X} \\
& \leqslant \frac{1}{2} \|w^{*}_{\varepsilon}-w^{*}\|_{X} + \|T_{\varepsilon}(w^{*})-w^{*}\|_{X}.
\end{split}
\]
Thus, using again \eqref{conv} we have
\[
\|w^{*}_{\varepsilon}-w^{*}\|_{X} \leqslant 2  \|T_{\varepsilon}(w^{*})-w^{*}\|_{X} \to 0, \quad \mbox{as} \quad \varepsilon \to 0.
\]
\end{proof}

\begin{remark}
The Theorem \ref{theoremUS} and the Theorem \ref{Teoemsas} show the continuity of the set of equilibria $\mathcal{E}_{\varepsilon}, \ \varepsilon \in [0,\varepsilon_0]$ at $\varepsilon=0$; namely, the Theorem \ref{Teoemsas} shows the lower semicontinuity of the set of equilibria. Moreover, the Theorem \ref{Teoemsas} shows that if $w^{*}$ is a solution of the problem \eqref{AP2Eq}, which satisfies $0 \not\in \mbox{Re}\sigma(\mathbb{A}+DF_0(w^{*}))$, then, for each $0<\varepsilon \leqslant \varepsilon_0$, with $\varepsilon_0$ suficiently small, there exists an unique solution $w^{*}_{\varepsilon}$ of the problem \eqref{AP1Eq} in a neighborhood of $w$.
\end{remark}

Therefore we conclude the continuity of the set of equilibria $\{\mathcal{E}_{\varepsilon}: \varepsilon \in [0,\varepsilon_0]\}$ at $\varepsilon =0$.

\begin{remark}
\label{remarkhyperbolic}
Now that we have obtained an unique solution $w^{*}_{\varepsilon}$ for \eqref{AP1Eq} in a small neighborhood of the hyperbolic solution $w^{*}$ for \eqref{AP2Eq}, we can consider the linearization $\mathbb{A}+DF_{0}(w^{*}_{\varepsilon})$ and from the convergence of $w^{*}_{\varepsilon}$ to $w^{*}$ in $X$ it is easy to obtain that $(\mathbb{A}+DF_{\varepsilon}(w^{*}_{\varepsilon}))^{-1}w_{\varepsilon}$ converges to $(\mathbb{A}+DF_{0}(w^{*}))^{-1}w$ in $X$, whenever $w_{\varepsilon}\to w$ in $\mathcal{H}$, as $\varepsilon\to0$. Consequently, the hyperbolicity of $w^{*}$ implies the hyperbolicity of $w^{*}_{\varepsilon}$, for suitably small $\varepsilon$.
\end{remark}

\begin{theorem}
\label{theofiniteequilibria}
If all solutions $w^{*}$ of \eqref{AP2Eq} satisfy $0\notin \mbox{Re}\sigma(\mathbb{A}+DF_{0}(w^{*}))$, then \eqref{AP2Eq} has a finite number $k$ of solutions, $w^{*}_{1},...,w^{*}_{k}$, and there exists $\varepsilon_{0}>0$ such that, for each $\varepsilon \in (0, \varepsilon_{0}]$, the equation \eqref{AP1Eq} has exactly $k$ solutions, $w^{*}_{\varepsilon,1},...,w^{*}_{\varepsilon,k}$. Moreover, for all $i=1,...,k$,
\[
w^{*}_{\varepsilon,i}\to w^{*}_{i}\quad \mbox{in}\quad X, \quad \mbox{as $\varepsilon\to0$}.
\]
\end{theorem}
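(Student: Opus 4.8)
The plan is to combine the finiteness of $\mathcal{E}_0$ coming from hyperbolicity, the local uniqueness of perturbed equilibria provided by Theorem \ref{Teoemsas}, and a compactness argument to exclude spurious equilibria.

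\textbf{Step 1: finiteness of $\mathcal{E}_0$.} By hypothesis every solution of \eqref{AP2Eq} satisfies $0\notin\operatorname{Re}\sigma(\mathbb{A}+DF_0(w^{*}))$, hence is hyperbolic and therefore isolated by Theorem \ref{theoremisolated}. Since $\mathcal{E}_0$ is compact by Lemma \ref{compacto} and consists entirely of isolated points, it must be finite; write $\mathcal{E}_0=\{w^{*}_1,\dots,w^{*}_k\}$ with the $w^{*}_i$ pairwise distinct.

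\textbf{Step 2: at least $k$ equilibria near $\mathcal{E}_0$ for small $\varepsilon$.} For each $i=1,\dots,k$ I would apply Theorem \ref{Teoemsas} at $w^{*}=w^{*}_i$, obtaining $\varepsilon_0^{(i)}>0$ and $\delta_i>0$ so that \eqref{AP1Eq} has exactly one solution $w^{*}_{\varepsilon,i}$ in $\overline{B}_{\delta_i}(w^{*}_i)$ for all $\varepsilon\in(0,\varepsilon_0^{(i)}]$, and $\|w^{*}_{\varepsilon,i}-w^{*}_i\|_X\to0$ as $\varepsilon\to0$. Since the $w^{*}_i$ are distinct, after shrinking the $\delta_i$ I may assume the balls $\overline{B}_{\delta_i}(w^{*}_i)$ are pairwise disjoint; setting $\varepsilon_0=\min_{1\le i\le k}\varepsilon_0^{(i)}$ then gives, for every $\varepsilon\in(0,\varepsilon_0]$, at least $k$ distinct equilibria $w^{*}_{\varepsilon,1},\dots,w^{*}_{\varepsilon,k}$ of \eqref{AP1Eq}, together with the asserted convergence.

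\textbf{Step 3: no further equilibria — the main obstacle.} I would argue by contradiction. If \eqref{AP1Eq} had, for arbitrarily small $\varepsilon$, a solution other than $w^{*}_{\varepsilon,1},\dots,w^{*}_{\varepsilon,k}$, then by the local uniqueness in Step 2 there would exist $\varepsilon_n\to0$ and solutions $v_n$ of \eqref{AP1Eq} at $\varepsilon=\varepsilon_n$ with $v_n\notin\bigcup_{i=1}^k\overline{B}_{\delta_i}(w^{*}_i)$. Writing $v_n=-\mathbb{A}^{-1}F_{\varepsilon_n}(v_n)$, using that $\mathbb{A}^{-1}:\mathcal{H}\to X$ is compact (as in Lemma \ref{compacto}) and that $\{F_{\varepsilon_n}(v_n)\}$ is bounded in $\mathcal{H}$ by Lemma \ref{Lipschitz}(i), I extract a subsequence with $v_n\to v$ in $X$. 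Then Lemma \ref{Lipschitz}(iv) gives $F_{\varepsilon_n}(v_n)\to F_0(v)$ in $\mathcal{H}$, and continuity of $\mathbb{A}^{-1}$ yields $v=-\mathbb{A}^{-1}F_0(v)$, i.e. $v\in\mathcal{E}_0$, so $v=w^{*}_j$ for some $j$. Hence $v_n\in\overline{B}_{\delta_j}(w^{*}_j)$ for all large $n$, contradicting the choice of $v_n$ (and, via Theorem \ref{Teoemsas}, forcing $v_n=w^{*}_{\varepsilon_n,j}$). This shows \eqref{AP1Eq} has exactly $k$ solutions for $\varepsilon\in(0,\varepsilon_0]$; the convergence $w^{*}_{\varepsilon,i}\to w^{*}_i$ is already part of Step 2. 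I expect the only delicate bookkeeping to be the uniform disjointification of the balls and the passage from "$v_n$ eventually in $\overline{B}_{\delta_j}(w^{*}_j)$" to the contradiction with its exclusion.
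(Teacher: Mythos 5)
Your proposal is correct and follows the same route as the paper, which simply combines Theorem \ref{theoremisolated} (hyperbolic $\Rightarrow$ isolated, plus compactness of $\mathcal{E}_0$, gives finiteness) with Theorem \ref{Teoemsas} (local existence, uniqueness and convergence of the perturbed equilibria near each $w^{*}_{i}$). Your Step 3, which rules out spurious equilibria away from $\bigcup_{i}\overline{B}_{\delta_i}(w^{*}_i)$ via the fixed-point formulation, the compactness of $\mathbb{A}^{-1}:\mathcal{H}\to X$ and Lemma \ref{Lipschitz}(iv), supplies a detail the paper leaves implicit (it could equally be deduced from the upper semicontinuity of the sets of equilibria in Theorem \ref{theoremUS}), and your version of it is sound.
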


\begin{proof}
The proof follows of Theorems \ref{theoremisolated} and \ref{Teoemsas}. 
\end{proof}


\section{Lower semicontinuity of attractors}\label{Sec:Lower_At}

Next we show that the local unstable manifolds of $w^{*}_{\varepsilon,i}$ fixed, are continuous in $X$ as $\varepsilon\to0$. This
fact and the continuity of the set of equilibria enable us to prove the lower semicontinuity of the attractors at  $\varepsilon=0$. For this we will use the convergence results of the previous sections and the convergence of the linearized semigroups proved next.

The main aim of this section is the proof of existence unstable local manifolds as a graph of a Lipschitz function, its convergence and exponential attraction. Let us consider $w^{*}_{\varepsilon,i}$ be an equilibrium solution for \eqref{AP1}, thus $\mathbb{A} w^{*}_{\varepsilon,i}+F_\varepsilon(w^{*}_{\varepsilon,i})=0$. To deal with a neighborhood of the equilibrium solution $w^{*}_{\varepsilon,i}$, we rewrite the problem \eqref{AP1} as
\begin{equation}
\label{pusogalerkin}
\begin{cases}
\dfrac{d{\bf w}^\varepsilon}{dt}={\bf A}_\varepsilon {\bf w}^\varepsilon+F_\varepsilon({\bf w}^\varepsilon+w^{*}_{\varepsilon,i})-F_\varepsilon(w^{*}_{\varepsilon,i})-DF_\varepsilon(w^{*}_{\varepsilon,i}){\bf w}^\varepsilon, \quad t>0,\\
{\bf w}^\varepsilon(0)=w_0-w^{*}_{\varepsilon,i},
\end{cases}
\end{equation}
where ${\bf w}^\varepsilon=w^\varepsilon-w^{*}_{\varepsilon,i}$ and ${\bf A}_\varepsilon=\mathbb{A}+DF_\varepsilon(w^{*}_{\varepsilon,i})$. With this, one can look for the previous sections with the unbounded linear operator ${\bf A}_\varepsilon$ instead of the unbounded linear operator $\mathbb{A}$.

Let $\varsigma$ be a smooth, closed, simple, rectifiable curve in $\{z\in \mathbb{C}: {\textrm Re}z>0\}$, oriented counterclockwise and such that the bounded connected component of $\mathbb{C}\backslash \{\varsigma\}$; here, $\{\varsigma\}$ denotes the trace of $\varsigma$, contains $\{z\in \sigma({\bf A}_0):{\textrm Re z}>0\}$. Let $\{\varsigma\}\subset \rho({\bf A}_\varepsilon)$, for all $\varepsilon \in [0,\varepsilon_1]$, for some $\varepsilon_1>0$. We define ${\bf Q}_\varepsilon$ by
\[
{\bf Q}_\varepsilon=\frac{1}{2\pi i} \int_{\varsigma} (\lambda -{\bf A}_\varepsilon)^{-1} d\lambda,
\]
for any $\varepsilon \in [0,\varepsilon_1]$.

There exist $\beta>0$ and $C\geq 1$ such that
\[
\|e^{-{\bf A}_\varepsilon t}{\bf Q}_\varepsilon\|_{\mathcal{L}(X)}\leq Ce^{-\beta t},
\]
for any $t\geq 0$ and
\[
\|e^{-{\bf A}_\varepsilon t}(I-{\bf Q}_\varepsilon)\|_{\mathcal{L}(X)}\leq Ce^{\beta t}
\]
for any $t>0$ and $\varepsilon \in [0,\varepsilon_1]$.

Using the decomposition $X={\bf Q}_\varepsilon X\oplus(I-{\bf Q}_\varepsilon)X$ (the solution ${\bf w}^\varepsilon$ of \eqref{pusogalerkin} can be decomposed as ${\bf w}^\varepsilon={\bf Q}_\varepsilon {\bf w}^\varepsilon+(I-{\bf Q}_\varepsilon){\bf w}^\varepsilon$), we rewrite \eqref{pusogalerkin} as following
\begin{equation}\label{sist_acoplado}
\begin{cases}
\dfrac{d}{dt}({\bf Q}_\varepsilon {\bf w}^\varepsilon)={\bf A}_\varepsilon\, {\bf Q}_\varepsilon {\bf w}^\varepsilon+H_\varepsilon({\bf Q}_\varepsilon {\bf w}^\varepsilon,(I-{\bf Q}_\varepsilon){\bf w}^\varepsilon),\\ \\
\dfrac{d}{dt}[(I-{\bf Q}_\varepsilon){\bf w}^\varepsilon]={\bf A}_\varepsilon(I-{\bf Q}_\varepsilon) {\bf w}^\varepsilon+G_\varepsilon({\bf Q}_\varepsilon {\bf w}^\varepsilon,(I-{\bf Q}_\varepsilon){\bf w}^\varepsilon),
\end{cases}
\end{equation}
where
\[
\begin{split}
&H_\varepsilon({\bf Q}_\varepsilon {\bf w}^\varepsilon,(I-{\bf Q}_\varepsilon){\bf w}^\varepsilon)\\
&:={\bf Q}_\varepsilon[F({\bf Q}_\varepsilon {\bf w}^\varepsilon+(I-{\bf Q}_\varepsilon){\bf w}^\varepsilon+w^{*}_{\varepsilon,i})-F(w^{*}_{\varepsilon,i})-DF(w^{*}_{\varepsilon,i})({\bf Q}_\varepsilon {\bf w}^\varepsilon+(I-{\bf Q}_\varepsilon){\bf w}^\varepsilon)]
\end{split}
\]
and
\[
\begin{split}
&G_\varepsilon({\bf Q}_\varepsilon {\bf w}^\varepsilon,(I-{\bf Q}_\varepsilon){\bf w}^\varepsilon)\\
 &:=(I-{\bf Q}_\varepsilon)[F({\bf Q}_\varepsilon {\bf w}^\varepsilon+(I-{\bf Q}_\varepsilon){\bf w}^\varepsilon+w^{*}_{\varepsilon,i})-F(w^{*}_{\varepsilon,i})-DF(w^{*}_{\varepsilon,i})({\bf Q}_\varepsilon {\bf w}^\varepsilon+(I-{\bf Q}_\varepsilon){\bf w}^\varepsilon)].
\end{split}
\]

The maps $H_\varepsilon$ and $G_\varepsilon$ are continuously differentiable with $H_\varepsilon(0,0)=G_\varepsilon(0,0)=0$ and $DH_\varepsilon(0,0)=DG_\varepsilon(0,0)=0$. For simplicity of notation, we write $\omega^\varepsilon={\bf Q}_\varepsilon {\bf w}^\varepsilon$ and $\vartheta^\varepsilon=(I-{\bf Q}_\varepsilon){\bf w}^\varepsilon$.  Hence, given  $\rho>0$, there exist $\varepsilon_1>0$ and $r>0$ such that if $\|\omega^\varepsilon\|_{{\bf Q}_\varepsilon X}+\|\vartheta^\varepsilon\|_{(I-{\bf Q}_\varepsilon)X}<r$ and $\varepsilon \in [0,\varepsilon_1]$, then
\[
\|H_\varepsilon(\omega^\varepsilon,\vartheta^\varepsilon)\|_{{\bf Q}_\varepsilon X}\leq\rho\  \  \hbox{and}\  \  \|G_\varepsilon(\omega^\varepsilon,\vartheta^\varepsilon)\|_{(I-{\bf Q}_\varepsilon)X}\leq\rho,
\]
\[
\|H_\varepsilon(\omega^\varepsilon,\vartheta^\varepsilon)-H_\varepsilon(\bar\omega^\varepsilon,\bar\vartheta^\varepsilon)\|_{{\bf Q}_\varepsilon X}\leq\rho(\|\omega^\varepsilon-\bar\omega^\varepsilon\|_{{\bf Q}_\varepsilon X}+\|\vartheta^\varepsilon-\bar\vartheta^\varepsilon\|_{(I-{\bf Q}_\varepsilon)X})
\]
and
\[
\|G_\varepsilon(\omega^\varepsilon,\vartheta^\varepsilon)-G_\varepsilon(\bar\omega^\varepsilon,\bar\vartheta^\varepsilon)\|_{(I-{\bf Q}_\varepsilon)X}\leq\rho(\|\omega^\varepsilon-\bar\omega^\varepsilon\|_{{\bf Q}_\varepsilon X}+\|\vartheta^\varepsilon-\bar\vartheta^\varepsilon\|_{(I-{\bf Q}_\varepsilon)X}).
\]

Considering the coupled system \eqref{sist_acoplado}, we can show an unstable manifold theorem using similar arguments used in the results in Henry \cite[Chapter 6]{He}. For this, we consider the following theorem.

\begin{theorem}
\label{cont-variedades}
There exists a map $s_{*}^\varepsilon:{\bf Q}_\varepsilon X\to(I-{\bf Q}_\varepsilon)X$ such that the unstable manifold of $w^{*}_{\varepsilon,i}$ is given by
\[
W^u(w^{*}_{\varepsilon,i})=\{(\omega,\vartheta)\in X;\ \vartheta=s_{*}^\varepsilon(\omega),\ \omega\in{\bf Q}_\varepsilon X\}.
\]
The map $s_{*}^\varepsilon$ satisfies
\[
|\!|\!|s_{*}^\varepsilon|\!|\!|:=\sup_{\omega\in{\bf Q}_\varepsilon X}\|s_{*}^\varepsilon(\omega)\|_{X}\leq C_{Lip},\ \ \
\|s_{*}^\varepsilon(\omega)-s_{*}^\varepsilon(\widetilde{\omega})\|_{X}\leq \bar C_{Lip}\|\omega-\widetilde{\omega}\|_{{\bf Q}_\varepsilon X},
\]
where $C_{Lip}>0$ is constant independent of $\varepsilon$, and 
\[
|\!|\!|s^\varepsilon_{*}-s^0_{*}|\!|\!|\to0, \quad \mbox{as $\varepsilon\to0$}.
\]
Furthermore, there exist $\rho_1>0$ and $k>0$, independents of $\varepsilon$, and $t_0>0$ such that, for any solution $(\omega^\varepsilon(t),\vartheta^\varepsilon(t))\in X$ ($t \in [t_0,\infty)$) of \eqref{sist_acoplado}, we have
\[
\|\vartheta^\varepsilon(t)-s^\varepsilon_{*}(\omega^\varepsilon(t))\|_{X}\leq ke^{-\rho_1(t-t_0)}\|\vartheta^\varepsilon(t_0)-s^\varepsilon_{*}(\omega^\varepsilon(t_0))\|_{X},\quad \forall t\geq t_0.
\]
\end{theorem}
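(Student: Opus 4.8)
The plan is to establish Theorem \ref{cont-variedades} by adapting the classical unstable manifold construction of Henry \cite[Chapter 6]{He} to the parametrized family \eqref{sist_acoplado}, carefully tracking uniformity in $\varepsilon$, and then superimposing the convergence $\varepsilon\to0$. The construction has three logically separate parts: (a) existence of the Lipschitz graph $s_*^\varepsilon$ with $\varepsilon$-uniform Lipschitz bounds; (b) the convergence $|\!|\!|s^\varepsilon_*-s^0_*|\!|\!|\to0$; and (c) the exponential tracking estimate. All three rest on the exponential dichotomy estimates for $e^{-{\bf A}_\varepsilon t}{\bf Q}_\varepsilon$ and $e^{-{\bf A}_\varepsilon t}(I-{\bf Q}_\varepsilon)$ already recorded before the statement, with the constants $C,\beta$ uniform in $\varepsilon\in[0,\varepsilon_1]$, together with the smallness of the Lipschitz constants $\rho$ of $H_\varepsilon,G_\varepsilon$ on a ball of radius $r$, which may also be taken uniform in $\varepsilon$.

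\medskip
\noindent\textbf{Step (a): existence of $s_*^\varepsilon$ as a Lipschitz graph.} For fixed $\varepsilon\in[0,\varepsilon_1]$, I would work in the Banach space $\Sigma$ of bounded, globally Lipschitz maps $s:{\bf Q}_\varepsilon X\to(I-{\bf Q}_\varepsilon)X$ with $|\!|\!|s|\!|\!|\le C_{Lip}$ and Lipschitz constant $\le\bar C_{Lip}$, and look for a bounded global solution of \eqref{sist_acoplado} lying on the graph of $s$. Concretely, for each $\omega_0\in{\bf Q}_\varepsilon X$ one solves the integral equation for a bounded backward orbit
\[
\omega^\varepsilon(t)=e^{-{\bf A}_\varepsilon(-t)}\,\omega_0+\int_0^t e^{-{\bf A}_\varepsilon(s-t)}{\bf Q}_\varepsilon\,H_\varepsilon(\omega^\varepsilon(s),\vartheta^\varepsilon(s))\,ds,
\]
\[
\vartheta^\varepsilon(t)=\int_{-\infty}^t e^{-{\bf A}_\varepsilon(s-t)}(I-{\bf Q}_\varepsilon)\,G_\varepsilon(\omega^\varepsilon(s),\vartheta^\varepsilon(s))\,ds,
\]
in a weighted space of functions $(-\infty,0]\to X$ with norm $\sup_{t\le0}e^{\eta t}\|\cdot\|_X$ for a suitable $\eta\in(0,\beta)$. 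The dichotomy bounds plus the $\rho$-Lipschitz smallness make the right-hand side a uniform contraction once $\rho$ is small relative to $\beta$; the unique fixed point depends Lipschitz-continuously on $\omega_0$, and setting $s_*^\varepsilon(\omega_0):=\vartheta^\varepsilon(0)$ gives the desired map with the stated bounds, all constants independent of $\varepsilon$ because the input constants are. That $W^u(w^*_{\varepsilon,i})$ is exactly this graph follows by the standard argument identifying bounded backward orbits converging to the equilibrium with points of the graph.

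\medskip
\noindent\textbf{Step (b): convergence $|\!|\!|s^\varepsilon_*-s^0_*|\!|\!|\to0$.} Here I would subtract the two fixed-point equations (for parameter $\varepsilon$ and for $0$) and estimate. The difference splits into a term where the same nonlinearity is evaluated at different arguments — absorbed by the contraction, contributing a factor that can be moved to the left side — and a term measuring the discrepancy between the $\varepsilon$-data and the $0$-data: the difference of semigroups $e^{-{\bf A}_\varepsilon t}{\bf Q}_\varepsilon - e^{-{\bf A}_0 t}{\bf Q}_0$, the difference ${\bf Q}_\varepsilon-{\bf Q}_0$, and the difference $H_\varepsilon-H_0$, $G_\varepsilon-G_0$. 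The last is controlled by Lemma \ref{Lipschitz} (iii)–(iv) and Lemma \ref{resultsconvnonlinearity} (together with $w^*_{\varepsilon,i}\to w^*_i$ from Theorem \ref{theofiniteequilibria}); the projector and semigroup differences are controlled via the resolvent convergence packaged in Lemma \ref{important}, writing ${\bf Q}_\varepsilon-{\bf Q}_0=\frac{1}{2\pi i}\int_\varsigma[(\lambda-{\bf A}_\varepsilon)^{-1}-(\lambda-{\bf A}_0)^{-1}]d\lambda$ and noting $\{\varsigma\}$ is a fixed compact set in the common resolvent set. Feeding these into the perturbed contraction estimate yields $|\!|\!|s^\varepsilon_*-s^0_*|\!|\!|\le \mathrm{const}\cdot o(1)\to0$.

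\medskip
\noindent\textbf{Step (c): exponential attraction of the manifold.} Given a solution $(\omega^\varepsilon(t),\vartheta^\varepsilon(t))$ of \eqref{sist_acoplado} that stays in the ball of radius $r$ for $t\ge t_0$, set $z^\varepsilon(t)=\vartheta^\varepsilon(t)-s^\varepsilon_*(\omega^\varepsilon(t))$. Differentiating and using that the graph is invariant, $z^\varepsilon$ satisfies a variation-of-constants formula governed by $e^{-{\bf A}_\varepsilon t}(I-{\bf Q}_\varepsilon)$ (whose backward norm decays like $e^{-\beta t}$) with a forcing term that is $\rho$-Lipschitz small and vanishes on the graph; a Gronwall argument then gives $\|z^\varepsilon(t)\|_X\le k e^{-\rho_1(t-t_0)}\|z^\varepsilon(t_0)\|_X$ for $\rho_1=\beta-\mathcal{O}(\rho)>0$ and $k$ depending only on $C$, again uniformly in $\varepsilon$. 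I expect the main obstacle to be Step (b): one must make the perturbation estimate quantitative in a way that simultaneously uses the uniform-in-$\varepsilon$ contraction constant from Step (a) and the several distinct convergences ($F_\varepsilon\to F_0$, $DF_\varepsilon\to DF_0$, ${\bf A}_\varepsilon\to{\bf A}_0$ in the resolvent sense, $w^*_{\varepsilon,i}\to w^*_i$), keeping careful track that the $o(1)$ terms are uniform over $\omega\in{\bf Q}_\varepsilon X$ — the boundedness and global Lipschitz character of the nonlinearities from Lemmas \ref{Lipschitz}–\ref{resultsconvnonlinearity} are exactly what makes this uniformity available.
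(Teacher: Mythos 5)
Your proposal is correct and follows essentially the same route as the paper: the authors give no detailed argument here, simply invoking the classical unstable manifold construction of Henry [Chapter 6] together with the uniform-in-$\varepsilon$ dichotomy estimates, the smallness of the Lipschitz constants of $H_\varepsilon, G_\varepsilon$, and the convergence results of the preceding sections. Your three-step outline (uniform contraction in a space of Lipschitz graphs, perturbation of the fixed-point equation to get $|\!|\!|s^\varepsilon_*-s^0_*|\!|\!|\to0$, and the Gronwall-type exponential tracking estimate) is precisely the argument being cited, worked out in more detail than the paper itself provides.
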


\begin{proof}
Thanks to the results of previous sections, the proof follows using arguments already known in the literature, see e.g. Henry \cite[Chapter 6]{He}.
\end{proof}

\begin{theorem}
The family of global attractors $\{\mathcal{A}_\varepsilon: \varepsilon \in [0,\varepsilon_0] \}$ is lower semicontinuous at $\varepsilon=0$; that is,
\[
\operatorname{dist}_{X}(\mathcal{A}_0,\mathcal{A}_\varepsilon) \to 0, \quad \mbox{as} \quad \varepsilon \to 0,
\]
where
\[
\operatorname{dist}_{X}(\mathcal{A}_0,\mathcal{A}_\varepsilon):=\sup_{w_0 \in\mathcal{A}_0}\inf_{w_\varepsilon \in\mathcal{A}_\varepsilon}\{\| w_{\varepsilon}-w_0 \|_{X}\}.
\]
\end{theorem}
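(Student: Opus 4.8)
The plan is to run the standard argument for lower semicontinuity of attractors of gradient-like semigroups: reduce to a neighbourhood of the (finitely many) equilibria, transport the continuity of the local unstable manifolds (Theorem~\ref{cont-variedades}) and of the equilibria (Theorem~\ref{theofiniteequilibria}) across $\varepsilon=0$, and then push this forward by a finite time using the convergence of the semigroups on the attractors (Proposition~\ref{proposition4.5}).

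First I would note the structural identity $\mathcal{A}_\varepsilon=\bigcup_{i=1}^{k}W^u(w^{*}_{\varepsilon,i})$, $\varepsilon\in[0,\varepsilon_0]$. Indeed, $\mathcal{A}_\varepsilon=W^u(\mathcal{E}_\varepsilon)$ was recalled in Section~\ref{Sec:ExAtt_UpperSem}, and by Theorem~\ref{theofiniteequilibria} the set $\mathcal{E}_\varepsilon=\{w^{*}_{\varepsilon,1},\dots,w^{*}_{\varepsilon,k}\}$ is finite with pairwise distinct, hence isolated, elements; since the curve $t\mapsto S_\varepsilon(-t)w$ is continuous and eventually stays within $\frac13\min_{i\ne j}\|w^{*}_{\varepsilon,i}-w^{*}_{\varepsilon,j}\|_X$ of $\mathcal{E}_\varepsilon$, it must converge to one single equilibrium; in particular each $W^u(w^{*}_{\varepsilon,i})\subset\mathcal{A}_\varepsilon$. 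Hence it suffices to approximate an arbitrary $w_0\in W^u(w^{*}_{0,i})$ by points of $\mathcal{A}_\varepsilon$, and to make the approximation uniform on $\mathcal{A}_0$.

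For the pointwise statement, fix $w_0\in W^u(w^{*}_{0,i})$ and let $\zeta\colon(-\infty,0]\to X$ be the backward solution with $\zeta(0)=w_0$ and $\zeta(-t)\to w^{*}_{0,i}$ as $t\to\infty$. I would choose $T>0$ large enough that $\zeta(-T)$ lies in the portion of $W^u(w^{*}_{0,i})$ described as the graph of $s^0_{*}$ over ${\bf Q}_0X$ in Theorem~\ref{cont-variedades}, and write $\zeta(-T)=w^{*}_{0,i}+\omega_0+s^0_{*}(\omega_0)$ with $\omega_0={\bf Q}_0(\zeta(-T)-w^{*}_{0,i})$. Using $w^{*}_{\varepsilon,i}\to w^{*}_{0,i}$ (Theorem~\ref{theofiniteequilibria}), the convergence ${\bf Q}_\varepsilon\to{\bf Q}_0$ of the spectral projections associated with ${\bf A}_\varepsilon=\mathbb{A}+DF_\varepsilon(w^{*}_{\varepsilon,i})$ (from the resolvent convergence of Lemma~\ref{important} and Remark~\ref{remarkhyperbolic} and the construction preceding Theorem~\ref{cont-variedades}), and $|\!|\!|s^\varepsilon_{*}-s^0_{*}|\!|\!|\to0$ together with the $\varepsilon$-uniform Lipschitz bound on $s^\varepsilon_{*}$, one shows that
\[
z_\varepsilon:=w^{*}_{\varepsilon,i}+\omega_\varepsilon+s^\varepsilon_{*}(\omega_\varepsilon),\qquad \omega_\varepsilon:={\bf Q}_\varepsilon\big(\zeta(-T)-w^{*}_{\varepsilon,i}\big),
\]
belongs to $W^u(w^{*}_{\varepsilon,i})\subset\mathcal{A}_\varepsilon$ for $\varepsilon$ small, and $z_\varepsilon\to\zeta(-T)$ in $X$. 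Since $\mathcal{A}_\varepsilon$ is invariant, $S_\varepsilon(T)z_\varepsilon\in\mathcal{A}_\varepsilon$, and
\[
\|S_\varepsilon(T)z_\varepsilon-w_0\|_X\le\|S_\varepsilon(T)z_\varepsilon-S_0(T)z_\varepsilon\|_X+\|S_0(T)z_\varepsilon-S_0(T)\zeta(-T)\|_X;
\]
the first term is $\le M(T)C(\varepsilon)\to0$ by Proposition~\ref{proposition4.5} (applicable precisely because $z_\varepsilon\in\mathcal{A}_\varepsilon$), the second $\to0$ by continuity of $S_0(T)$ and $z_\varepsilon\to\zeta(-T)$, and $S_0(T)\zeta(-T)=\zeta(0)=w_0$; thus $\operatorname{dist}_X(w_0,\mathcal{A}_\varepsilon)\to0$. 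For the uniform statement I would argue by contradiction: if $\operatorname{dist}_X(\mathcal{A}_0,\mathcal{A}_\varepsilon)\not\to0$, there are $\delta>0$, $\varepsilon_n\to0$ and $w^n_0\in\mathcal{A}_0$ with $\operatorname{dist}_X(w^n_0,\mathcal{A}_{\varepsilon_n})\ge\delta$; by compactness of $\mathcal{A}_0$ a subsequence satisfies $w^n_0\to\bar w_0\in\mathcal{A}_0$, whence $\operatorname{dist}_X(\bar w_0,\mathcal{A}_{\varepsilon_n})\ge\delta-\|w^n_0-\bar w_0\|_X\ge\delta/2$ for $n$ large, contradicting the pointwise statement applied at $\bar w_0$.

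The hard part is the middle step --- upgrading the \emph{local} unstable manifolds to \emph{global} ones $\varepsilon$-uniformly. One must simultaneously control the two $\varepsilon$-dependent moving objects, the base point $w^{*}_{\varepsilon,i}$ and the projection ${\bf Q}_\varepsilon$, so that $\omega_\varepsilon$ is well defined and $\omega_\varepsilon\to\omega_0$; check that $z_\varepsilon$ actually lies on the part of $W^u(w^{*}_{\varepsilon,i})$ sitting inside $\mathcal{A}_\varepsilon$ (i.e.\ within the range of $\omega$ where $s^\varepsilon_{*}$ represents the true unstable set); and verify that the $w_0$-dependent time $T$ causes no trouble, which is automatic here since the semigroup is used only on the finite interval $[0,T]$ --- exactly the range where Proposition~\ref{proposition4.5} applies. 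All the ingredients needed are supplied by Theorems~\ref{Teoemsas}, \ref{theofiniteequilibria}, \ref{cont-variedades} and the linearized-semigroup estimates set up before Theorem~\ref{cont-variedades}.
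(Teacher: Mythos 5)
Your proposal is correct and follows essentially the same route as the paper: write $\mathcal{A}_0=\bigcup_{i}W^u(w^{*}_{0,i})$, pull $w_0$ back to a point on the local unstable manifold, approximate that point by one on $W^u_{loc}(w^{*}_{\varepsilon,i})$ using Theorems~\ref{theofiniteequilibria} and~\ref{cont-variedades}, and push forward by $S_\varepsilon(T)$ via Proposition~\ref{proposition4.5}. You in fact supply more detail than the paper does (the explicit graph construction of $z_\varepsilon$ via ${\bf Q}_\varepsilon$ and $s^\varepsilon_*$, and the compactness argument upgrading the pointwise approximation to uniformity over $\mathcal{A}_0$, which the paper's proof leaves implicit).
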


\begin{proof}
Thanks to the results of previous sections, the proof follows using arguments already known in the literature, see e.g. \cite[Chapter 3, Section 3.3]{CLR}. 
Let $w\in\mathcal{A}_0$. Since $\{S_{0}(t): t \geqslant 0\}$ is a gradient system, we have that
\[
\mathcal{A}_0 = \bigcup_{w^*\in\mathcal{E}_0}W^u(w^*)
\]
and then $w\in W^u(w^*)$, for some $w^*\in\mathcal{E}_0$. Let $\tau\in\mathbb{R}$ and $\varphi\in W^u_{loc}(w^*)$  be such that $S_{0}(\tau)\varphi=w$. Let $w_\varepsilon^*$ be such that $w_\varepsilon^*\to w^*$ as $\varepsilon\to0$. From the convergence of unstable manifolds there is a sequence $\{\varphi_\varepsilon\}_{\varepsilon\in(0,\varepsilon_0]}$, $\varphi_\varepsilon\in W^u_{loc}(w^*_\varepsilon)$ with $w_{\varepsilon}^{*}\in\mathcal{E}_\varepsilon$, such that $\varphi_\varepsilon\to\varphi$ as $\varepsilon\to0$. Finally, from Proposition \ref{proposition4.5}, we obtain $S_{\varepsilon}(\tau)\varphi_\varepsilon\to S_{0}(\tau)\varphi=w$ as $\varepsilon\to0$. To conclude, we observe that if $w_\varepsilon=S_{\varepsilon}(\tau)\varphi_{\varepsilon}$, then $w_\varepsilon\in\mathcal{A}_\varepsilon$, since 
\[
\varphi_\varepsilon\in \bigcup_{w_\varepsilon^*\in\mathcal{E}_\varepsilon}W^u(w^*_\varepsilon)=\mathcal{A}_\varepsilon
\] 
and $\mathcal{A}_\varepsilon$ is invariant.
\end{proof}

\begin{corollary}
The family of global attractors $\{\mathcal{A}_\varepsilon:\varepsilon\in(0,\varepsilon_0]\}$ is continuous at $\varepsilon=0$.
\end{corollary}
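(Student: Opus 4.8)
\emph{Proof proposal.} The plan is to observe that the continuity of a family of compact sets at $\varepsilon=0$, measured in the Hausdorff semidistances, is by definition the conjunction of upper semicontinuity and lower semicontinuity at $\varepsilon=0$. Both ingredients are already in hand: the upper semicontinuity theorem of Section \ref{Sec:ExAtt_UpperSem} gives $\operatorname{dist}_X(\mathcal{A}_\varepsilon,\mathcal{A}_0)\to0$ as $\varepsilon\to0$, and the preceding theorem of this section gives $\operatorname{dist}_X(\mathcal{A}_0,\mathcal{A}_\varepsilon)\to0$ as $\varepsilon\to0$. First I would recall that the symmetric Hausdorff distance between $\mathcal{A}_0$ and $\mathcal{A}_\varepsilon$ is
\[
d_H(\mathcal{A}_0,\mathcal{A}_\varepsilon)=\max\big\{\operatorname{dist}_X(\mathcal{A}_\varepsilon,\mathcal{A}_0),\,\operatorname{dist}_X(\mathcal{A}_0,\mathcal{A}_\varepsilon)\big\},
\]
and then conclude $d_H(\mathcal{A}_0,\mathcal{A}_\varepsilon)\to0$ as $\varepsilon\to0$ by taking the maximum of two quantities each tending to zero.

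There is no real obstacle here: the corollary is a bookkeeping statement that packages the two semicontinuity results already established. The only point worth emphasizing in the write-up is that both semicontinuity statements hold for the \emph{same} range $\varepsilon\in[0,\varepsilon_0]$ (after possibly shrinking $\varepsilon_0$ so that all the hypotheses — existence, uniform boundedness and hyperbolicity of the finitely many equilibria, existence and convergence of the local unstable manifolds — are simultaneously valid), so that the combined limit is meaningful. Once that is noted, the proof is complete in one line, and I would state it as such.
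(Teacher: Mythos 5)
Your proposal is correct and matches the paper's (implicit) argument exactly: the corollary is stated without proof precisely because it is the immediate combination of the upper semicontinuity theorem of Section \ref{Sec:ExAtt_UpperSem} and the lower semicontinuity theorem just proved, with the Hausdorff distance being the maximum of the two semidistances. Your remark about taking a common $\varepsilon_0$ for both statements is a sensible bit of bookkeeping and nothing more is needed.
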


\end{document}